%MARK'S VERSION EMAILED TO SAM JUNE, 23 2014.
%MARK'S VERSION WITH SOME DETAILS OF THEOREM 1.4
%MARK'S VERSION AUG 14, 2014.  
%Sam's version Aug 20, 2014
%MARK ADDED PROOF OF THEOREM 5.2 AUG 22, 2014
%MARK'S VERSION Aug 29, 2014. 
%MARK'S VERSION SEPT 10, 2014. 
%SAM'S VERSION OCT 14 2014
%MARK'S SLIGHT ADJUSTMENTS TO WITH \fl(i) IN SECTION 4.3 and prop p:symmnum added
%MARK'S VERSION OCT 27 2014.
%MARK'S VERSION OCT 29, 2014.  SOME MESSING AROUND WITH INTRO.
%MARK'S VERSION NOV 4, 2014. 
%MARK'S VERSION NOV 7, 2014. 
% SAM'S VERSION DEC 3, 2014.
%MARK'S VERSION DEC 11, 2014. 
%VERSION SAM SENT ME DEC 14, 2014. 
%VERSION MARK DEC 16, 2014. 
%VERSION SAM DEC 17, 2014.
%VERSION SUBMITTED TO MATH Z DEC 18, 2014. 

\documentclass[12pt]{amsart}

\setlength{\textheight}{21cm}
\setlength{\textwidth}{16cm}
\setlength{\topmargin}{0cm}
%\topmargin -0.3truein
\setlength{\parskip}{0.3\baselineskip}
\hoffset=-1.4cm
\baselineskip=13pt

\usepackage{amsmath}
\usepackage{amssymb}
\usepackage{mathdots}
\usepackage[all]{xy}

\numberwithin{equation}{section}

\newtheorem{thm}{Theorem}[section]

\newtheorem{lem}[thm]{Lemma}
\newtheorem{prop}[thm]{Proposition}
\newtheorem{cor}[thm]{Corollary}
\theoremstyle{definition}
\newtheorem{rem}[thm]{Remark}

\newtheorem{exam}[thm]{Example}
\newtheorem{exam-nota}[thm]{Example-Notation}
\newtheorem{nota}[thm]{Notation}
\newtheorem{dfn}[thm]{Definition}

\newtheorem{dfn-nota}[thm]{Definition-Notation}

\newtheorem{dfn-lem}[thm]{Lemma-Definition}

\newcommand{\beqa}{\begin{eqnarray*}}
\newcommand{\eeqa}{\end{eqnarray*}}

\newcommand{\id}{\mbox{${\rm id}$}}

\renewcommand{\Im}{\mbox{${\rm Im}$}}

\newcommand{\fa}{\mbox{${\mathfrak a}$}}
\newcommand{\ft}{\mbox{${\mathfrak t}$}}
\newcommand{\fk}{\mbox{${\mathfrak k}$}}
\newcommand{\fg}{\mbox{${\mathfrak g}$}}
\newcommand{\fq}{\mbox{${\mathfrak q}$}}
\newcommand{\fl}{\mbox{${\mathfrak l}$}}
\newcommand{\fs}{\mbox{${\mathfrak s}$}}
\newcommand{\fsl}{\mbox{${\fs\fl}$}}

\newcommand{\fh}{\mbox{${\mathfrak h}$}}
\newcommand{\fn}{\mbox{${\mathfrak n}$}}

\newcommand{\fp}{\mbox{${\mathfrak p}$}}
\newcommand{\fr}{\mbox{${\mathfrak r}$}}
\newcommand{\fb}{\mbox{${\mathfrak b}$}}
\newcommand{\fz}{\mbox{${\mathfrak z}$}}
\newcommand{\fm}{\mbox{${\mathfrak m}$}}
\newcommand{\fu}{\mbox{${\mathfrak u}$}}

\newcommand{\eps}{\epsilon}

\newcommand{\C}{\mbox{${\mathbb C}$}}

\newcommand{\Ad}{{\rm Ad}}

\newcommand{\fgl}{\mathfrak{gl}}

\newcommand{\B}{\mathcal{B}}
\newcommand{\Par}{\mathcal{P}}

\newcommand{\X}{\mathfrak{X}}
\newcommand{\fso}{\mathfrak{so}}
\newcommand{\fsp}{\mathfrak{sp}}
\newcommand{\codim}{\mbox{codim}}

\title{Eigenvalue coincidences and multiplicity free spherical pairs}

\author[M. Colarusso]{Mark Colarusso}
\address{Department of Mathematical Sciences, University of Wisconsin-Milwaukee, Milwaukee, WI, 53201}
\email{colaruss@uwm.edu}

\author[S. Evens]{Sam Evens}
\address{Department of Mathematics, University of Notre Dame, Notre Dame, IN, 46556}
\email{sevens@nd.edu}
\subjclass[2010]{14M15, 14L30, 20G20}
\keywords{$K$-orbits on flag variety, algebraic group actions}

\begin{document}
\maketitle
%\tableofcontents
\begin{abstract}
 In recent work, we related the structure of subvarieties of $n\times n$
complex matrices defined by eigenvalue coincidences to
$GL(n-1,\mathbb{C})$-orbits on the flag variety of
$\mathfrak{gl}(n,\mathbb{C})$. In the first part of this paper, we extend these
results to the complex orthogonal Lie algebra
$\mathfrak{g}=\mathfrak{so}(n,\mathbb{C})$. In the second part of the paper, we
use these results to study the geometry and invariant theory of the 
$K$-action on $\mathfrak{g}$, in the cases where $(\mathfrak{g}, K)$ is 
$(\mathfrak{gl}(n,\mathbb{C}), GL(n-1,\C))$ or $(\mathfrak{so}(n,\mathbb{C}), SO(n-1,\C))$.
 We study the geometric quotient $\mathfrak{g}\to
\mathfrak{g}//K$ and describe the closed $K$-orbits on $\mathfrak{g}$ and
the structure of the zero fibre.  We also prove that for $x\in \mathfrak{g}$,
the $K$-orbit $\Ad(K)\cdot x$ has maximal dimension if and only if
the algebraically independent generators of the invariant ring
$\mathbb{C}[\mathfrak{g}]^{K}$ are linearly independent at $x$, which
extends a theorem of Kostant.   We give applications of our results
to the Gelfand-Zeitlin system.

  %In this paper, we extend the results of our recent paper connecting the structure of certain subvarieties of $n\times n$ complex matrices defined by eigenvalue coincidences to $GL(n-1,\C)$-orbits on the flag variety of $\fgl(n,\C)$ to the complex orthogonal Lie algebra $\fg=\mathfrak{so}(n,\C)$.  We also study the geometry and invariant theory of the coisotropy representation of the spherical pairs $(G \times K, K_{\Delta})$,
 % where $K=GL(i,\C),\, SO(i,\C)$, and $K_{\Delta}$ is the diagonal copy of $K$ in the product $G\times K$.   
%  The coisotropy representation of $(G \times K, K_{\Delta})$ is easily identified with the action of $K$ on $\fg$ by conjugation.
  %We study the geometric quotient $\fg\to \fg//K$, describing all of the closed $K$-orbits on $\fg$ and the structure of the zero fibre.  We also prove an analogue of Kostant's theorem describing the variety of regular elements of $\fg$.   That is, we show that a $K$-orbit through an element $x\in\fg$ has maximal dimension if and only if the algebraically independent generators of the invariant ring $\C[\fg]^{K}$ are linearly independent at $x$.  
\end{abstract}

\section{Introduction} 

This paper studies two related questions.  Let $x\in\fgl(n,\C)$ be an $n\times n$ complex matrix, and 
let $x_{\fk}\in\fgl(n-1,\C)$ be the $(n-1)\times (n-1)$ submatrix in the upper left corner of $x$.  In \cite{CEeigen}, 
we studied the subvariety of $\fgl(n,\C)$ consisting of matrices $x$ such that $x$ and $x_{\fk}$ have a specified 
number of eigenvalues in common.  In the first part of the paper, we extend these results from $\fgl(n,\C)$
to $\fso(n,\C)$.   In the second part of the paper, we use the results
from \cite{CEeigen} and the first part of the paper to study the action of $K$ on $\fg$ by conjugation in the two cases $(K=GL(n-1,\C), \fg=\fgl(n,\C))$
and $(K=SO(n-1,\C), \fg=\fso(n,\C)).$  By a theorem of Knop \cite{Knhc},
the algebra $\C[\fg]^K = \C[\fg]^G \otimes
\C[\fk]^K$ is a polynomial algebra. It follows that
the quotient morphism $\fg \to \fg//K$ can be identified with
a morphism $\Phi_n$ from $\fg$ to affine space, which is a partial
version of a morphism considered by Kostant and Wallach \cite{KW1, KW2}.  % in their study of the Gelfand-Zeitlin integrable system on $\fgl(n,\C)$.
We study this morphism, and as a consequence,
we determine explicitly the closed $K$-orbits on $\fg$ and the structure of the zero fibre.  We also prove a variant
of Kostant's theorem using linear independence of differentials to
characterize regular elements \cite{Kostant63}.  This variant of Kostant's theorem allows us to give a simpler 
definition of the strongly regular elements of $\fg$, which were
introduced by Kostant and Wallach to construct the Gelfand-Zeitlin integrable system on $\fg$ (\cite{KW1}, \cite{Col2}).  
We use this simpler definition to establish new results about the Gelfand-Zeitlin system on $\fso(n,\C)$.

In more detail, let $\fg=\fso(n,\C)$ and let $\fk\subset \fg$ be the 
symmetric subalgebra
$\fk=\fso(n-1,\C)$ fixed by an involution $\theta$ of $\fg$. Let $r_n$ and
$r_{n-1}$ be the ranks of $\fg$ and $\fk$ respectively.
   %The goal of these notes is to prove the analogue 
%of Theorem 3.7 in our most recent paper \cite{CEeigen} for the pair $(\fg, \fk)$.  (Along the way, we can 
%develop a cool analogoue of Kostant's theorem for the coisotropy representation of the spherical pair $(\fso(n,\C)\oplus\fso(n-1,\C), \fso(n-1,\C)_{\Delta})$. 
Recall that if $a$ is an eigenvalue of $x \in \fso(n,\C)$, then $-a$ is
also an eigenvalue of $x$, and if $n$ is odd, then 
the eigenvalue $0$ of $x$ occurs an odd number of times.  Let 
 \begin{equation}\label{eq:spectrum}
\sigma(x)=\{\pm b_{1},\dots, \pm b_{r_{n}}\}
\end{equation}
be the eigenvalues of $x$, listed with multiplicity, except that if $n$
is odd, we only list the eigenvalue $0$ $2j$ times if it appears with
multiplicity $2j+1$.  We call $\sigma(x)$ the spectrum of $x$.
For $x\in\fg$, let $x=x_{\fk} + x_{\fp}$ where $x\in \fk$ and $x_{\fp} 
\in \fg^{-\theta}$, and let $\sigma(x_{\fk})=\{\pm a_{1},\dots, \pm a_{r_{n-1}}\}$ be the 
spectrum of $x_{\fk}$, regarded as an element of $\fso(n-1,\C)$. 
 We consider the \emph{eigenvalue coincidence} varieties
$\fg(\ge i)$ consisting
of $x\in \fg$ such that $\sigma(x)$ and $\sigma(x_{\fk})$ share at least
$2i$ elements, counting multiplicity.   More precisely, for $i=0, \dots,
r_{n-1}$,
\begin{equation}\label{eq:coincidences}
\fg(\geq i):=\{x\in\fg: b_{j_{m}}=\pm a_{k_{m}},\, m=1,\dots, i \mbox{ with } \, j_r\not= j_s\mbox{ and } k_r \not= k_s \mbox{ for } r\not= s \}.
\end{equation}
For $\fg=\fgl(n,\C)$ and $\fk=\fgl(n-1,\C)$ thought of as the $(n-1)\times (n-1)$ upper left corner of $\fg$, the analogous varieties were studied in \cite{CEeigen}.  We let $\sigma(x)$ denote the eigenvalues of $x$, and let 
$\sigma(x_{\fk})$ denote the eigenvalues of $x_{\fk}$ regarded as an
element of $\fgl(n-1,\C)$.  
In this case, the variety $\fg(\geq i)$ consists of elements $x$ such that 
$\sigma(x)$ and $\sigma(x_{\fk})$ share at least $i$ elements, counted
with multiplicity.

%Similarly, if $\fg=\fso(2l,\C)$ and $\fk=\fso(2l-1,\C)\subset\fg$, then $\fg(\geq i)$ is defined for 
%$i=1,\dots, l.$ (i.e. you ignore the known zero eigenvalue of $x\in\fso(2l-1,\C)$ as a coincidence.)

We make use of the following notation throughout.  
We denote the flag variety of a reductive Lie algebra $\fg$ by
$\B_{\fg}$, or by $\B$ when $\fg$ is understood.
For a Borel subalgebra $\fb$ of $\fg$, we denote its $K$-orbit in 
$\B$ by $Q=K\cdot \fb$.
We denote the $K$-saturation of $\fb$ in $\fg$ by $Y_{Q}:=\Ad(K)\fb=\{ \Ad(k)x:k\in K, x\in \fb \}$.  Note that the variety 
$Y_{Q}$ depends only on the $K$-orbit $Q$ in $\B$.  We prove the following
result.

\begin{thm}\label{thm:bigthm}
The irreducible component decomposition of the variety $\fg(\geq i)$ is given by 
\begin{equation}\label{eq:king}
\fg(\geq i)=\displaystyle\bigcup_{\mbox{codim}(Q)=i} \overline{Y_{Q}}.
\end{equation}
In particular, if $\fg=\fso(2l,\C)$ is type $D$ then the varieties 
$\fg(\geq i)$ are all irreducible. If $\fg=\fso(2l+1,\C)$ is type 
$B$ then $\fg(\geq i)$ is irreducible for $i=0,\dots, l-1$ and has exactly two irreducible components when $i=l$.    
\end{thm}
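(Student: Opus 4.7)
My plan is to adapt the strategy of \cite{CEeigen}, which established the analogous statement for the pair $(\fgl(n,\C),\fgl(n-1,\C))$, to the orthogonal symmetric pair $(\fso(n,\C),\fso(n-1,\C))$. The argument splits into three parts: showing $\overline{Y_Q}\subseteq\fg(\geq i)$ whenever $\codim(Q)=i$, establishing the reverse set-theoretic inclusion, and counting $K$-orbits of codimension $i$ in $\B$ to identify the irreducible components.

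For the first inclusion, I would fix a Borel $\fb$ with $Q=K\cdot\fb$ and choose a $\theta$-stable Cartan $\fh\subset\fb$, so that $\fh=\fh^{\theta}\oplus\fh^{-\theta}$. For $x\in\fb$ with Jordan decomposition $x=h+n$, the spectrum $\sigma(x)$ is determined by $h\in\fh$, while the spectrum of $x_{\fk}$ can be computed from the projection of $h$ onto $\fk$ along $\fg^{-\theta}$, since the nilpotent part of $x$ only contributes zeros. A direct analysis of this projection shows that $\fh^{\theta}$ forces at least $2\,\codim(Q)$ common entries between $\sigma(x)$ and $\sigma(x_{\fk})$, with the factor $2$ reflecting the $\pm$-pairing in orthogonal spectra. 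Since the defining condition of $\fg(\geq i)$ is $\Ad(K)$-invariant and closed, the inclusion extends to $\overline{Y_Q}$.

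For the reverse inclusion, I would take $x\in\fg(\geq i)$ and exhibit a Borel $\fb$ with $x\in\overline{Y_Q}$ for some $Q$ of codimension exactly $i$. Inductively peeling off shared eigenvalue pairs $\pm a$ of $x$ and $x_{\fk}$, each such pair yields a common $\theta$-stable eigenspace on which $x$ and $x_{\fk}$ agree up to sign, and refining this $\theta$-stable decomposition to a full flag produces the desired Borel. Since $Y_Q$ is the image of the irreducible variety $K\times\fb$ under the adjoint action, each $\overline{Y_Q}$ is an irreducible closed subvariety, so the irreducible components of $\fg(\geq i)$ are precisely the distinct $\overline{Y_Q}$ with $\codim(Q)=i$.

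For the component count, I would use the parameterization of $K$-orbits on $\B$ for $(SO(n,\C),SO(n-1,\C))$ by $\theta$-stable Cartans up to $K$-conjugacy. In type $D$ this yields exactly one $K$-orbit of each codimension $i\in\{0,\ldots,l-1\}$, so $\fg(\geq i)$ is irreducible. In type $B$ the same count gives exactly one orbit for each $i=0,\ldots,l-1$, but two orbits at $i=l$, arising from the two $K$-conjugacy classes of maximally split $\theta$-stable Cartans in $\fso(2l+1,\C)$, distinguished by how the zero eigenvalue is handled. The main obstacle is verifying this last combinatorial/geometric claim and checking that the two codimension-$l$ orbits produce distinct maximal-dimensional components of $\fg(\geq l)$; I would resolve it by explicit matrix computations in $\fso(2l+1,\C)$ together with a comparison of associated Cartans.
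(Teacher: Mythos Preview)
Your proposal has genuine gaps in all three parts.

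For the inclusion $\overline{Y_Q}\subseteq\fg(\geq i)$, your argument only works when $\fb$ is $\theta$-stable, i.e., when $Q$ is closed. For a general $\fb$ containing a $\theta$-stable Cartan $\ft$, write $x=h+n$ with $h\in\ft$ and $n\in\fn=[\fb,\fb]$. The projection $n_{\fk}$ need not lie in the nilradical of any Borel of $\fk$ and need not even be nilpotent: if $\alpha$ is a real or complex $\theta$-unstable root for $\fb$, then $\theta(\fg_\alpha)$ sits in a negative root space, so the $\fk$-projection of $\fg_\alpha$ already leaves $\fn$. Thus $\sigma(x_{\fk})$ is not determined by $h_{\fk}$ alone. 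Moreover, $\dim\ft^{\theta}$ does not track $\codim(Q)$: in these real-rank-one pairs $\dim\ft^{\theta}$ takes only the two values $r_n$ and $r_n-1$, while $\codim(Q)$ ranges over $\{0,\dots,r_{n-1}\}$. The paper circumvents this by embedding each $\fb$ in a specific $\theta$-stable parabolic $\fr\supset\fb$ whose Levi centre $\fz\subset\fk$ has dimension $i$; for $x\in\fr$ the coordinates of $x_{\fz}$ visibly lie in both $\sigma(x)$ and $\sigma(x_{\fk})$, and one shows $\overline{Y_Q}=Y_{Q_{\fr}}$ by checking that $(K\cap L_{ss})\cdot(\fb\cap\fl_{ss})$ is the open orbit in $\B_{\fl_{ss}}$.

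For the reverse inclusion and the identification of components, your sketch omits the crucial dimension input. Even granting a set-theoretic equality, to conclude that each $\overline{Y_Q}$ with $\codim(Q)=i$ is an irreducible component you need $\dim\overline{Y_Q}=\dim\fg-i$ together with equidimensionality of $\fg(\geq i)$. The paper obtains equidimensionality from flatness of the partial Kostant-Wallach map $\Phi_n:\fg\to\fg//K$ (a nontrivial fact, here via Knop's theorem on spherical coisotropy representations); flatness is also what guarantees that the open set $U=\{x:x_{\fk}\in\fk^{rs},\ 0\notin\sigma(x_{\fk})\}$ meets every component of $\fg(\geq i)$. The exhaustion is then an explicit matrix computation on elements of $U(\geq i)$ with diagonal $x_{\fk}$, showing each is $W_K$-conjugate into $\fr$. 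Your ``peeling off eigenspaces'' idea does not supply a substitute for either step.

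Finally, your parameterization of $K$-orbits on $\B$ is incorrect: $K$-orbits are not in bijection with $K$-conjugacy classes of $\theta$-stable Cartans. In type $B$ there are $l+2$ orbits but only two Cartan classes, and the two closed orbits $Q_{\pm}=K\cdot\fb_{\pm}$ (which give the two components of $\fg(\geq l)$) both contain the \emph{same} compact Cartan $\fh$; they are distinguished because $\fb_-=s_{\alpha_l}(\fb_+)$ and $W_K$ has index $2$ in $W=W^{\theta}$, not by different maximally split Cartans. The paper obtains the orbit count and codimensions via the monoidal action on $K\backslash\B$ starting from the closed orbits.
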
 

Although this statement is similar to Theorem 1.1 of \cite{CEeigen}, the
proof requires some significant new ideas, because 
 computations analogous to those performed in \cite{CEeigen} for $\fgl(n,\C)$
are intractable for $\fso(n,\C)$.

%to ones for $\fgl(n,\C)$.

In the second part of the paper, we consider the pairs $(G,K)$ given by $G=GL(n,\C)$, $K=GL(n-1,\C)$
and $G=SO(n,\C)$, $K=SO(n-1,\C)$, which are essentially the only
 symmetric pairs for which the branching rule for finite dimensional representations from $G$ to $K$ is multiplicity free.  Let $\fg$ and $\fk$ be the corresponding Lie algebras.    For each pair, let $\tilde{G}=G\times K$ and let
$K_{\Delta}$ be the diagonal embedding of $K$ in $\tilde{G}$.
It is standard that $K_{\Delta}$ is a spherical subgroup of $\tilde{G}$ (Proposition \ref{prop:isspherical}).  We consider the coisotropy representation of $K_{\Delta}$ on 
$\tilde{\fg}/\fk_{\Delta}$, which coincides with the adjoint
action of $K$ on $\fg$.   We say that $x\in \fg$ is 
\emph{$n$-strongly regular} if $x$ is in a $K$-orbit of maximal dimension.  We write the generators of
$\C[\fg]^K$ as $\{f_{n-1,1}, \dots, f_{n-1, r_{n-1}}; f_{n, 1}, \dots,
f_{n, r_n}\}$ (for $\fgl(i,\C)$, $r_i=i$).  
The following theorem extends a basic result of Kostant
\cite{Kostant63}.   
\begin{thm}\label{thm:kostext}[Theorem \ref{thm:Kostant}] An element $x\in \fg$ is $n$-strongly regular if and only if 
\begin{equation}\label{eq:introdiffs}
df_{n-1,1}(x)\wedge\dots\wedge df_{n-1, r_{n-1}}(x)\wedge df_{n,1}(x)\wedge\dots\wedge df_{n, r_{n}}(x)\neq 0. 
\end{equation}
%$\{ df(x): f\in J \}$ are linearly independent.
\end{thm}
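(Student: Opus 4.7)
The plan is to translate the condition (\ref{eq:introdiffs}) into a statement about centralizers via Kostant's classical regularity theorem, and then to prove the two implications separately, using an orbit-fiber analysis for the harder direction.

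For the setup, the first step is to fix a nondegenerate $\Ad(G)$-invariant symmetric bilinear form on $\fg$ whose restriction to $\fk$ is nondegenerate (the trace form serves in both families). With this form, each $df_{n,j}(x)$ is represented by the gradient $\nabla f_{n,j}(x)\in\fz_\fg(x)$ (since $f_{n,j}$ is $G$-invariant), and each $df_{n-1,j}(x)$ by $\nabla f_{n-1,j}(x_\fk)\in\fz_\fk(x_\fk)\subseteq\fk$ (since $\C[\fg]^K=\C[\fg]^G\otimes\C[\fk]^K$ exhibits $f_{n-1,j}$ as the pullback of a $K$-invariant on $\fk$). Kostant's theorem, applied separately to $\fg$ and to $\fk$, then says that $\{\nabla f_{n,j}(x)\}$ (respectively $\{\nabla f_{n-1,j}(x_\fk)\}$) is linearly independent and spans $\fz_\fg(x)$ (respectively $\fz_\fk(x_\fk)$) precisely when $x$ is $G$-regular in $\fg$ (respectively $x_\fk$ is $K$-regular in $\fk$). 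Consequently, (\ref{eq:introdiffs}) will hold if and only if $x$ is $G$-regular, $x_\fk$ is $K$-regular, and $\fz_\fg(x)\cap\fz_\fk(x_\fk)=0$.

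To handle (\ref{eq:introdiffs})$\,\Rightarrow n$-strong regularity, the next step is to identify $\fz_\fg(x)\cap\fz_\fk(x_\fk)$ with $\fz_\fk(x)$. One inclusion is immediate because $\fz_\fk(x_\fk)\subseteq\fk$ forces the intersection to lie in $\fz_\fg(x)\cap\fk=\fz_\fk(x)$; the other follows because for $y\in\fz_\fk(x)$ the relation $0=[y,x]=[y,x_\fk]+[y,x_\fp]$ respects the orthogonal decomposition $\fg=\fk\oplus\fp$ (using $[\fk,\fk]\subseteq\fk$ and $[\fk,\fp]\subseteq\fp$), so each summand vanishes separately. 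A direct dimension count based on Knop's theorem then gives $\max\dim K\cdot x=\dim\fg-r_n-r_{n-1}=\dim\fk$ in both families, so the condition $\fz_\fk(x)=0$ extracted from (\ref{eq:introdiffs}) is exactly the $n$-strongly regular condition.

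For the converse, assume $\fz_\fk(x)=0$, so that $K_x$ is finite, hence reductive, and Matsushima's theorem makes $K\cdot x$ closed in $\fg$. The key geometric step is to show that $K\cdot x$ is in fact the entire fiber of $\Phi_n$ through $x$. Mumford's theorem guarantees that each fiber of a reductive quotient has a unique closed orbit, which here is $K\cdot x$; any other orbit $K\cdot y$ in the same fiber would satisfy $K\cdot x\subseteq\overline{K\cdot y}\setminus(K\cdot y)$, forcing $\dim K\cdot y>\dim K\cdot x=\dim\fk$ and contradicting the maximum orbit dimension. Since $\fg$ and $\fg//K\cong\C^{r_n+r_{n-1}}$ are smooth, the fiber at $x$ having the expected dimension $\dim\fk$ forces $\Phi_n$ to be smooth at $x$ by the fiber-dimension criterion, so $d\Phi_n|_x$ is surjective and (\ref{eq:introdiffs}) holds. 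The main obstacle is precisely this converse direction: one must promote the purely infinitesimal assumption $\fz_\fk(x)=0$ to the global statement that $K\cdot x$ is closed and equal to its $\Phi_n$-fiber, and this is where Matsushima's criterion and Mumford's uniqueness of closed orbits are the essential inputs.
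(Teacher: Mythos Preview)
Your forward direction is correct and is essentially the paper's argument: the identity $\fz_\fg(x)\cap\fz_\fk(x_\fk)=\fz_\fk(x)$ is what underlies Lemma~\ref{lem:dimest}(3), and from it the nonvanishing wedge immediately gives $n$-strong regularity.

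The converse has a genuine gap. Matsushima's criterion says that $K/K_x$ is affine when $K_x$ is reductive; it does \emph{not} say that $K\cdot x$ is closed in the ambient space. In our setting this already fails for $\fg=\fgl(2,\C)$, $K=GL(1,\C)$, $x=E_{12}$: here $K_x=\{1\}$ and $x$ is $n$-strongly regular, yet $K\cdot x=\C^{*}E_{12}$ is not closed, and the fibre $\Phi_{2}^{-1}(0)=\C E_{12}\cup\C E_{21}$ strictly contains $K\cdot x$. So both of your intermediate claims, that $K\cdot x$ is closed and that $\Phi_n^{-1}(\Phi_n(x))=K\cdot x$, are false in general. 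Your final step is also insufficient as stated: equidimensional fibres between smooth varieties give flatness (miracle flatness), not smoothness; you would still need the fibre to be smooth at $x$, which you have not established once the single-orbit claim fails.

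What is actually needed for the converse is to show that $n$-strong regularity of $x$ forces $x\in\fg$ and $x_\fk\in\fk$ to be regular in the sense of Kostant. The paper does this via Theorem~\ref{thm:regelts}: identifying $\fg$ with the coisotropy representation $\fk_{\Delta}^{\perp}\subset\tilde{\fg}$ of the spherical pair $(\tilde{G},K_{\Delta})$, Panyushev's inequality $\dim\Ad(\tilde{G})y\ge 2\dim K_{\Delta}\cdot y$ together with the numerology of Lemma~\ref{lem:dimest}(1) forces $(x,-x_\fk)$ to be $\tilde{G}$-regular, hence $x$ and $x_\fk$ are each regular. Once that is known, your centralizer identity $\fz_\fg(x)\cap\fz_\fk(x_\fk)=\fz_\fk(x)=0$ finishes the proof exactly as in the paper.
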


Using Theorem \ref{thm:kostext}, we show that the Zariski open set 
$\fg(0)=\{x\in\fg:\, \sigma(x)\cap\sigma(x_{\fk})=\emptyset\}$ consists entirely of $n$-strongly regular elements (Theorem \ref{thm:gzero}) and use
it to show that for $x\in\fg(0)$, the fibre $\Phi_{n}^{-1}(\Phi_{n}(x))$ is a single $K$-orbit (Corollary \ref{c:isgenericallyrad}).  Using this result along with Theorem \ref{thm:bigthm} and Theorem 3.7 of \cite{CEeigen}, we give explicit representatives for all closed $K$-orbits on $\fg$ (Theorem \ref{thm:closedKinfg}) and determine the nilfibre $\Phi^{-1}_{n}(0)$  (Theorem \ref{thm:nilfibre}).  In contrast to the case of $\fgl(n,\C)$, we show
that for $\fso(n,\C)$ the fibre $\Phi_n^{-1}(0)$ contains no $n$-strongly regular elements (Proposition \ref{prop:overlaps} and Corollary \ref{c:nonsreg}).   

  This work is motivated by our interest in the Gelfand-Zeitlin system,
which is a maximal Poisson commutative family $J_{GZ}$ in $\C[\fg]$ defined using
a family of subalgebras
 $\fg_1 \subset \fg_2 \subset \dots \subset \fg_{n-1} \subset \fg_n$,
where $\fg_i = \fgl(i,\C)$ in the general linear case, and $\fg_i=\fso(i,\C)$
in the orthogonal case.  Elements of $\fg$ for which the differentials $\{x\in\fg : df(x), \, f\in J_{GZ}\}$ are linearly independent are called \emph{strongly regular}.  In \cite{KW1}, Kostant and Wallach show that
any regular adjoint orbit in $\fgl(n,\C)$ contains strongly regular elements which implies that the Gelfand-Zeitlin system
is completely integrable on any regular adjoint orbit.  Using different techniques, the first author produced strongly regular elements in certain 
regular semisimple orbits of $\fso(n,\C)$ and proved the integrability of the Gelfand-Zeitlin system on these orbits \cite{Col2}.  The case of $\fgl(n,\C)$ is much better understood than the case of $\fso(n,\C)$, because the $\fso(n,\C)$
Gelfand-Zeitlin system is less amenable to computation.
 We hope that our methods will make the Gelfand-Zeitlin system
for $\fso(n,\C)$ more tractable to understand and will improve
our understanding of
the Gelfand-Zeitlin system for $\fg=\fgl(n,\C)$.
  As a step in this direction, we observe that 
Theorem \ref{thm:kostext} can be used to simplify the criterion for an element of $\fgl(n,\C)$ and $\fso(n,\C)$ to be strongly regular from \cite{KW1, Col2} (Proposition \ref{prop:fullsreg}).  This allows us to identify a previously unknown set of strongly regular elements of $\fso(n,\C)$ (Proposition \ref{prop:orthofgtheta}), and in later work, we will show that every regular adjoint of $\fso(n,\C)$
orbit contains strongly regular elements, implying the integrability of the Gelfand-Zeitlin system on all regular adjoint orbits of $\fso(n,\C)$.  We can also show that in contrast to the case of $\fgl(n,\C)$ there are no strongly regular elements $x\in\fso(n,\C)$ with the property that $f(x)=0$ for all $f\in J_{GZ}$ (Remark \ref{r:nosreg}).  These observations were previously inaccessible using the more computational methods of \cite{Col2, Col1}.  

We also plan
to apply results of this paper to study the Gelfand-Zeitlin modules
introduced by Drozd, Futorny, and Ovsienko \cite{DFO}, 
which are quantum analogues of the Gelfand-Zeitlin integrable systems.
  Our results in this paper develop parts of
a Kostant-Rallis theory \cite{KR} for the spherical pair $(\tilde{\fg}, K_{\Delta})$, and 
we expect it to play an important role in understanding a category of Harish-Chandra modules for these spherical pairs, especially through the study of
associated varieties.    Using an equivalence of categories analogous to the equivalence between category $\mathcal{O}$ and certain Harish-Chandra modules (\cite{BoBr}, Section 3.4), we plan to use $(\tilde{\fg}, K_{\Delta})$-modules in our future work 
to produce examples of Gelfand-Zeitlin modules and other closely related modules.  

% initiated by Drozd,
%Futorny, and Ovsienko \cite{DFO}, 

%and this would be interesting to 
%pursue further.  

This paper is organized as follows.  The first part of the paper comprises Sections 
\ref{ss:realization} and  \ref{ss:irredcomp}.  In Section \ref{ss:realization},
we establish a number of preliminary results.  In 
Section \ref{ss:irredcomp}, we prove Theorem \ref{thm:bigthm}.  The second part 
of the paper consists of Section \ref{ss:git}, in which we prove Theorem \ref{thm:kostext}, determine
the closed $K$-orbits on $\fg$, and discuss applications to strongly
regular elements.  In the appendix, we give an alternative, simpler proof of
a theorem of Knop in a special case.

We would like to thank Bertram Kostant, Nolan Wallach, and Jeb Willenbring for useful discussions relevant
to the subject of this paper.

\section{Preliminaries}\label{ss:realization}
We recall basic facts concerning orthogonal Lie algebras, and develop
some basic framework for the study of eigenvalue coincidence varieties.  
We also classify the $K=SO(n-1,\C)$-orbits on the flag variety $\B$ of $\fso(n,\C)$ and give 
explicit representatives for each orbit.

%\begin{enumerate}
%\item Description of realizations of $\fso(2l,\C)$ and $\fso(2l+1,\C)$.  This should include concrete
%realizations of the basis of $\C^{2l}$ and $\C^{2l+1}$. 
%\item Description of diagonal Cartan subalgebras.  
%\item Description of root system, Borels, positive roots, simple roots.
%\item Description of root vectors
%\item Description of involution defining $\fk$.
%\item Definition of what it means for a root to be compact, imaginary, non-compact, etc.
%\item A statement of which standard roots are compact imaginary, non-compact imaginary, etc
%\end{enumerate}

\subsection{Realization of Orthogonal Lie algebras}\label{ss:orthoreal}  
  We give explicit descriptions of standard Cartan subalgebras
and corresponding root systems of $\fso(n,\C)$.  Our exposition follows Chapters 1 and 2 of \cite{GW}.  

Let $\beta$ be the non-degenerate, symmetric 
bilinear form on $\C^{n}$ given by 
\begin{equation}\label{eq:beta}
\beta(x,y)=x^{T} S_{n} y, 
\end{equation}
where $x, y$ are $n\times 1$ column vectors and $S_{n}$ is the $n\times n$ 
matrix:
\begin{equation}\label{eq:Sn}
S_{n}=\left[\begin{array}{ccccc}
0& \dots &\dots & 0 & 1\\
\vdots &  & & 1 & 0\\
\vdots &  &\iddots  & & \vdots\\
0& 1& \dots & 0 & \vdots\\
1 & 0& \dots & \dots & 0\end{array}\right]
\end{equation}
with ones down the skew diagonal and zeroes elsewhere.  
%NEED GET MATH DOTS PACKAGE. 
The special orthogonal group is
$$
SO(n,\C):=\{g\in SL(n,\C): \, \beta(gx, gy)=\beta(x,y)\; \forall x,\, y \in \C^{n}\}.
$$
Its Lie algebra is
$$
\fso(n,\C)=\{Z\in\fgl(n,\C):\, \beta(Zx, y)=-\beta(x,Zy)\,\forall\; x, \, y\in\C^{n}\}.  
$$
For our purposes, it will be convenient to have explicit matrix descriptions of $\fso(n,\C)$.  
We consider the cases where $n$ is odd and even separately.  Throughout, we denote the standard basis of $\C^{n}$ by $\{e_{1},\dots, e_{n}\}$.

%We make use of the following notation throughout.
%For any $1\leq i,\, j\leq n$, let $E_{i,j}$ be the $n\times n$ matrix with $1$ in the $(i,j)$ position and zero elsewhere. 

\subsubsection{Realization of $\fso(2l,\C)$} \label{ss:soevenreal}
Let $\fg=\fso(2l,\C)$ be of type $D$.  The subalgebra of diagonal matrices $\fh:=\mbox{diag}[a_{1},\dots, a_{l}, -a_{l},\dots, -a_{1}],\, a_{i}\in\C$ is a Cartan subalgebra of $\fg$.  We refer to $\fh$ as the \emph{standard Cartan subalgebra}.  
Let $\epsilon_{i}\in\fh^{*}$ be the linear functional $\epsilon_{i}(\mbox{diag}[a_{1},\dots, a_{l}, -a_{l},\dots, -a_{1}])=a_{i}$, and 
let $\Phi(\fg, \fh)$ be the roots of $\fg$ with respect to $\fh$.  It is well-known that 
\begin{equation}\label{eq:soevenroots}
\Phi(\fg,\fh)=\{\epsilon_{i}-\epsilon_{j},\, \pm(\epsilon_{i}+\epsilon_{j}):\; 1\leq i\neq j\leq l\}. 
\end{equation}
We take as our \emph{standard positive roots} the set:
\begin{equation}\label{eq:soevenposroots}
\Phi^{+}(\fg,\fh):=\{\epsilon_{i}-\epsilon_{j},\, \epsilon_{i}+\epsilon_{j}:\; 1\leq i< j\leq l\}. 
\end{equation}
with corresponding simple roots 
\begin{equation}\label{eq:sosimpleroots}
\Pi:=\{\alpha_{1},\dots, \alpha_{l-1}, \alpha_{l}\}\mbox{ where } \alpha_{i}=\epsilon_{i}-\epsilon_{i+1}, \, i=1, \dots, l-1, \, \alpha_{l}=\epsilon_{l-1}+\epsilon_{l}.
\end{equation}
The \emph{standard} Borel subalgebra $\fb_{+}:=\displaystyle\bigoplus_{\alpha\in\Phi^{+}(\fg,\fh)} \fg_{\alpha}$
is easily seen to be the set of upper triangular matrices in $\fg$. 

For the purposes of computations with $\fso(2l,\C)$, it is convenient to 
relabel part of the standard basis of $\C^{n}$ as $e_{-j}:=e_{2l+1-j}$ for $j=1, \dots, l$.  
%Then the basis $\{e_{\pm 1}, \dots, e_{\pm l}\}$ is an isotropic basis of $\C^{2l}$ with respect to the form $\beta$ in (\ref{eq:beta}).

\subsubsection{Realization of $\fso(2l+1,\C)$} \label{ss:sooddreal}
Let $\fg=\fso(2l+1,\C)$ be of type $B$.  The subalgebra of diagonal matrices $\fh:=\mbox{diag}[a_{1},\dots, a_{l}, 0, -a_{l},\dots, -a_{1}],\, a_{i}\in\C$ is a Cartan subalgebra of $\fg$.  We again refer to $\fh$ as the \emph{standard Cartan subalgebra}.  
Let $\epsilon_{i}\in\fh^{*}$ be the linear functional $\epsilon_{i}(\mbox{diag}[a_{1},\dots, a_{l},0,  -a_{l},\dots, -a_{1}])=a_{i}$.  In this case, we have 
\begin{equation}\label{eq:sooddroots}
\Phi(\fg,\fh)=\{\epsilon_{i}-\epsilon_{j},\, \pm(\epsilon_{i}+\epsilon_{j}):\; 1\leq i\neq j\leq l\} \cup \{\pm \epsilon_{k} :\, 1\leq k\leq l\}.
\end{equation}
We take as our \emph{standard positive roots} the set:
\begin{equation}\label{eq:sooddposroots}
\Phi^{+}(\fg,\fh):=\{\epsilon_{i}-\epsilon_{j},\, \epsilon_{i}+\epsilon_{j}:\; 1\leq i< j\leq l\}\cup \{ \epsilon_{k} :\, 1\leq k\leq l\} . 
\end{equation}
with corresponding simple roots 
\begin{equation}\label{eq:sooddsimpleroots}
\Pi:=\{\alpha_{1},\dots, \alpha_{l-1}, \alpha_{l}\}\mbox{ where } \alpha_{i}=\epsilon_{i}-\epsilon_{i+1}, \, i=1, \dots, l-1, \, \alpha_{l}=\epsilon_{l}.
\end{equation}
The \emph{standard} Borel subalgebra $\fb_{+}:=\displaystyle\bigoplus_{\alpha\in\Phi^{+}(\fg,\fh)} \fg_{\alpha}$
is easily seen to be the set of upper triangular matrices in $\fg$.
 
   As for $\fso(2l+1,\C)$, we relabel the standard basis of $\C^{n}$ by
letting $e_{-j}:=e_{2l+2-j}$ for $j=1, \dots, l$ and $e_{0}:=e_{l+1}$.  
%Then the basis $\{e_{\pm 1}, \dots, e_{\pm l}, e_{0}\}$ is an isotropic basis of $\C^{2l+1}$ with respect to the form $\beta$ in (\ref{eq:beta}).  

\subsection{Real Rank $1$ symmetric subalgebras}\label{ss:symmetricreal}

For later use, 
recall the realization of $\fso(n-1,\C)$ as a symmetric 
subalgebra of $\fso(n,\C)$.
For $\fg=\fso(2l+1, \C)$, let $t$ be an element of the Cartan subgroup with
Lie algebra $\fh$  with the property that 
$\Ad(t)|_{\fg_{\alpha_i}}=\id$ for $i=1, \dots, l-1$ and $\Ad(t)|_{\fg_{\alpha_l}}=-\id$.  Consider the involution $\theta_{2l+1}:=\Ad(t)$.   Then $\fk=\fso(2l,\C)=\fg^{\theta_{2l+1}}$ 
(see \cite{Knapp02}, p. 700).  Note that $\fh \subset \fk$.
%It follows that $\fg^{\theta}=\{ A=(a_{ij}) \in \fso(2l+1, \C) : a_{lj}=a{kl}=0, \forall 1 \le j, k \le 2l+1 \}.$
In the case $\fg=\fso(2l,\C)$, $\fk=\fso(2l-1,\C)=\fg^{\theta_{2l}}$,
where $\theta_{2l}$ is the involution induced by the diagram automorphism interchanging
the simple roots $\alpha_{l-1}$ and $\alpha_l$ (see \cite{Knapp02}, p. 703).   Note that
in this case, $\theta_{2l}(\eps_l)=-\eps_l$ and $\theta_{2l}(\eps_i)=\eps_i$ for $i=1, \dots, l-1$.  
We will omit the subscripts $2l+1$ and $2l$ from $\theta$
when $\fg$ is understood.

We also denote the corresponding involution of $G=SO(n,\C)$ by $\theta$.  
The group $G^{\theta}=S(O(n-1,\C)\times O(1,\C))$ is disconnected.  
We let $K:=(G^{\theta})^{0}$ be the identity component of $G^{\theta}$.  Then $K=SO(n-1,\C)$, and $\mbox{Lie}(K)=\fk=\fg^{\theta}$. 

\subsection{Notation}\label{s:thenotation}
We now lay out some of the notation that we will use throughout Sections \ref{ss:realization} and \ref{ss:irredcomp}. 
\begin{nota}\label{nota:thenotation}
\begin{enumerate}
%\item For $\fg=\fgl(n,\C)$, we let $\fk=\fgl(n-1,\C)$ embedded in the
%upper left corner, although $\fk$ is not the fixed set of an involution.
%This convention facilitates simultaneous discussion of the pairs
%$(\fgl(n,\C), \fgl(n-1,\C))$ and $(\fso(n,\C), \fso(n-1,\C))$.
\item We let $\fg=\fso(n,\C)$ and $\fk=\fso(n-1,\C)$ be the symmetric subalgebra given in Section \ref{ss:symmetricreal}, unless otherwise mentioned.  
It will also be convenient at times to denote the Lie algebra $\fso(i,\C)$ by $\fg_{i}$.
\item We let $r_{i}$ be the rank of $\fg_{i}$.  
\item For $x\in\fso(n,\C)$, we let $x_{\fk}$ the projection of $x$ onto $\fk$ off $\fg^{-\theta}$, the $-1$-
eigenspace of $\theta$.  
%For $\fg=\fgl(n,\C)$, we let $x_{\fk}=x_{n-1}$ be the $(n-1)\times (n-1)$ upper% left corner of $x$. 
\item For any Lie algebra $\fg$, we denote by $\C[\fg]$, the ring of polynomial functions on $\fg$ and by 
$\C[\fg]^{G}$ the ring of adjoint invariant polynomial functions on $\fg$.
\end{enumerate}
\end{nota}

\subsection{The partial Kostant-Wallach map}

%Let $\fg=\fgl(n,\C), \, \fso(n,\C)$ and $\fg_{n-1}=\fgl(n-1,\C), \fso(n-1,\C).$

%We begin with some observations about the coisotropy 
%representation of certain spherical pairs.  

For $i=n-1, n$, let $\chi_{i}:\fg_{i}\to \C^{r_{i}}$ be the adjoint quotient. 
We define the \emph{partial Kostant-Wallach map} to be 
\begin{equation}\label{eq:partial}
\begin{array}{c}
\Phi_{n}:\fg\to \C^{r_{n-1}}\oplus \C^{r_{n}}, \\
\\
\; \Phi_n(x)=(\chi_{n-1}(x_{\fk}), \chi_{n}(x))=(f_{n-1, 1}(x_{\fk}),\dots, f_{n-1, r_{n-1}}(x_{\fk}), f_{n,1}(x),\dots, f_{n, r_{n}}(x)), 
\end{array}
\end{equation}
where $\C[\fg_{i}]^{G_{i}}=\C[f_{i,1},\dots, f_{i, r_{i}}]$.

%$\C[\fg]^{K}=\C[f_{n-1,1},\dots, f_{n-1, r_{n-1}}]$ and $\C[\fg]^{G}=\C[f_{n,1},\dots, f_{n, r_{n}}]$.  
%where for $x\in \fso(n,\C)$, $x_{\fk}$ denotes the projection of $x$ onto $\fk$ along $\fg^{-\theta}$.  

\begin{prop}\label{prop:flat}
\begin{enumerate}

\item $\C[\fg]^K=\C[\fg]^G \otimes \C[\fk]^K$.
\item $\Phi_n$ coincides with the invariant theory quotient
morphism $\fg \to \fg//K.$   In particular, $\Phi_n$ is
surjective.
\item The morphism $\Phi_{n}$ is flat.  In particular, its fibres
are equidimensional varieties of dimension $\dim \fg- r_{n}-r_{n-1}$. 
\end{enumerate}
\end{prop}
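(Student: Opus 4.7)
The plan is to handle the three parts in sequence, leaning on Knop's Harish-Chandra theorem. Part (1) is a direct application of Knop's theorem \cite{Knhc} to the multiplicity-free spherical pair $(\tilde{G}, K_\Delta) = (G \times K, K_\Delta)$; the sphericity holds because $(G, K)$ is one of the essentially multiplicity-free symmetric pairs (Proposition \ref{prop:isspherical}). An alternative, more elementary proof avoiding Knop's general machinery is given in the appendix.

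Part (2) is immediate from (1). Since $\C[\fg]^K$ is the polynomial algebra on the generators $\{f_{n-1, j}, f_{n, i}\}$, we identify $\fg // K := \mathrm{Spec}\, \C[\fg]^K$ with $\C^{r_{n-1}+r_n}$ in such a way that the quotient morphism becomes $\Phi_n$. Surjectivity is then standard for categorical quotients by reductive groups acting on affine varieties (a consequence of Hilbert's Nullstellensatz and the reductivity of $K$).

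For part (3), the cleanest route is to invoke Knop's flatness result for Harish-Chandra type quotients in this spherical setting, also contained in \cite{Knhc}, which yields flatness of $\Phi_n$ directly; the equidimensional statement then follows from $\dim \fg - \dim \fg // K = \dim \fg - r_n - r_{n-1}$. A self-contained alternative would proceed via Matsumura's miracle flatness theorem: since $\fg$ is smooth (hence Cohen-Macaulay) and $\fg // K$ is regular, flatness reduces to showing every fiber $\Phi_n^{-1}(a, b)$ has dimension exactly $\dim \fg - r_n - r_{n-1}$. The lower bound comes from the fiber dimension theorem applied to the surjective morphism $\Phi_n$ between irreducible varieties, while the upper bound can be approached by using the $\C^*$-scaling action on $\fg$, under which $\Phi_n$ is equivariant for a weighted $\C^*$-action on $\C^{r_{n-1}+r_n}$; letting $t \to 0$ and invoking upper semicontinuity of fiber dimension reduces the problem to bounding $\dim \Phi_n^{-1}(0)$.

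The main obstacle in the self-contained route is therefore controlling the dimension of the nilfibre $\Phi_n^{-1}(0)$, which would require the explicit geometric description carried out later in the paper (Theorem \ref{thm:nilfibre}) together with the irreducible component decomposition of Theorem \ref{thm:bigthm}. To avoid this circularity at the level of the preliminaries, the proof as written will simply cite Knop's theorem for both parts (1) and (3).
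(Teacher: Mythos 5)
Your proposal follows essentially the same route as the paper: part (1) is Knop's Harish-Chandra theorem (Theorem 10.1 of \cite{Knhc}), which the paper completes by observing that multiplicity-freeness makes $U(\fg)^K$ commutative and then passing to the associated graded algebra; part (2) is immediate from (1); and part (3) invokes Knop's flatness criterion for the coisotropy representation of the spherical pair $(\tilde{G},K_{\Delta})$, via the $K$-equivariant identification $\fk_{\Delta}^{\perp}\cong\fg$. The only slips are bookkeeping ones: the flatness statement is Korollar 7.2 of \cite{Kn}, not of \cite{Knhc}, and the appendix's conormal-geometry argument is an elementary substitute for that flatness criterion (i.e.\ for part (3)), not for part (1).
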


\begin{proof} Recall the well-known fact that the fixed point algebra $U(\fg)^K$
of $K$ in the enveloping algebra $U(\fg)$ is commutative \cite{Johnson}.   Hence, $U(\fg)^K$ coincides with its centre,
$Z(U(\fg)^K)$.   In Theorem 10.1 of \cite{Knhc}, Knop shows that
$Z(U(\fg)^K) \cong
U(\fg)^G \otimes_{\C} U(\fg)^K$.  The first assertion now follows by
taking the associated graded algebra with respect to the usual filtration
of $U(\fg)$.  By the first assertion, $\Phi_n$ coincides with the
invariant theory quotient $\fg \to \fg//K$, which gives the second
assertion.   Note that if we embed
$\fk$ diagonally in $\fg \times \fk$, then $\fk^{\perp} \cong \fg$,
and this isomorphism is $K$-equivariant.  Then the flatness
of $\Phi_n$ follows by Korollar 7.2 of \cite{Kn}, which gives a criterion
for flatness of invariant theory quotients in the setting of
 spherical homogeneous spaces (see also \cite{Pancoiso}).
\end{proof}

\begin{rem}\label{r:glflat}
For $\fg=\fgl(n,\C)$ and $\fk=\fgl(n-1,\C)$ thought of as the subalgebra 
of $(n-1)\times (n-1)$ matrices in the left hand corner of $\fg$, Proposition
\ref{prop:flat} is also true by the same proof.  Here $K=GL(n-1,\C)$ is the algebraic subgroup of $GL(n,\C)$ 
corresponding to $\fk$. In this case, we proved Proposition \ref{prop:flat} (3) by more elementary means
in \cite{CEeigen}.
 In the appendix, we use conormal geometry
to give a more elementary proof of Korollar 7.2 of \cite{Kn} for a class of spherical
varieties, which applies to our setting.
\end{rem}

\begin{cor}\label{c:orthoGZ}
$\C[\fg]$ is a free $\C[\fg]^K$-module.
\end{cor}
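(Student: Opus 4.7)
The plan is to deduce freeness from the flatness of $\Phi_n$ established in Proposition \ref{prop:flat}(3), combined with the fact that the invariant ring $\C[\fg]^K$ is a graded polynomial ring, via the standard principle that a flat graded module over a positively graded polynomial ring is free.

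First, I would observe that since $K$ acts linearly on $\fg$, the natural $\mathbb{N}$-grading on $\C[\fg]$ by polynomial degree restricts to an $\mathbb{N}$-grading on $\C[\fg]^K$ with $(\C[\fg]^K)_0 = \C$, making the inclusion $\C[\fg]^K \hookrightarrow \C[\fg]$ a homomorphism of graded rings. By Proposition \ref{prop:flat}(1) together with the Chevalley restriction theorem applied to the semisimple Lie algebras $\fg = \fso(n,\C)$ and $\fk = \fso(n-1,\C)$, the invariant ring $\C[\fg]^K \cong \C[\fg]^G \otimes_{\C} \C[\fk]^K$ is a graded polynomial ring on finitely many homogeneous generators of positive degree.

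Next, Proposition \ref{prop:flat}(3) asserts that $\Phi_n$ is flat, which is precisely the statement that $\C[\fg]$ is a flat module over $\C[\fg]^K$; moreover, this module is graded and concentrated in non-negative degrees. The final step is an application of the graded Nakayama lemma: set $\fm = (\C[\fg]^K)_+$ and choose homogeneous elements $\{m_i\}_{i \in I} \subset \C[\fg]$ whose images furnish a $\C$-basis of the graded vector space $\C[\fg]/\fm\,\C[\fg]$. These lifts yield a graded surjection $F = \bigoplus_{i \in I} \C[\fg]^K[-d_i] \twoheadrightarrow \C[\fg]$; writing $N$ for its kernel and tensoring the resulting short exact sequence with $\C = \C[\fg]^K/\fm$, the vanishing $\mathrm{Tor}_{1}^{\C[\fg]^K}(\C, \C[\fg]) = 0$ coming from flatness forces $N/\fm N = 0$, and graded Nakayama then yields $N = 0$, so $\C[\fg] \cong F$ is free.

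There is no serious obstacle to this argument: all the substantive content has already been built into Proposition \ref{prop:flat}(3), which depends on Knop's theorem on spherical varieties, and the passage from flatness to freeness in the positively graded polynomial setting is formal. Thus the corollary will follow essentially as a bookkeeping consequence of what has come before.
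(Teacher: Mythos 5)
Your proof is correct and follows essentially the same route as the paper: the paper also deduces freeness from the flatness in Proposition \ref{prop:flat}(3), simply citing Lemma 2.5 of \cite{CEeigen} for the step that a flat graded module over the positively graded ring $\C[\fg]^K$ is free, which is exactly the graded Nakayama argument you spell out.
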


\begin{proof} This follows by Lemma 2.5 of \cite{CEeigen} and the above
Proposition \ref{prop:flat} (3).
\end{proof}

We proved the analogous result for $\fgl(n,\C)$ in Proposition 2.6 of
\cite{CEeigen}, and noted that it follows from a result of
Futorny and Ovsienko, which states that $U(\fgl(n,\C))$ is free over the Gelfand-Zeitlin subalgebra \cite{Ov, Futfilt}.  It is not known whether the corresponding statement is true in the orthogonal case.   Our result that $\C[\fso(n,\C)]$ is free over $\C[\fso(n,\C)]^{SO(n-1,\C)}$ is a natural first step towards extending the result of Futorny and Ovsienko to the orthogonal setting and will be important in studying Gelfand-Zeitlin modules for the $\fso(n,\C)$.

%For the case $\fg=\fgl(n,\C)$ and $\fg_{n-1}=\fgl(n-1,\C)$, we proved this result in Theorem 2.3 and 
%Proposition 2.4 of \cite{CEeigen}.  The result also follows from work of Knop (Korollar 7.2, \cite{Kn}) and Panyushev, \cite{Pancoiso}. 
%We will prove this result in the appendix in more generality than stated here using conormal bundle geometry and properties of spherical varieties.

%When $\fg=\fgl(n,\C)$, the partial Kostant-Wallach map $\Phi_{n}$ is surjective (see 

%We prove this result in the appendix.

\subsection{General Properties of eigenvalue coincidence varieties $\fg(\geq i)$}\label{ss:fgi}
In this section, we develop some fundamental facts about the eigenvalue coincidence varieties
discussed in the introduction (see (\ref{eq:coincidences})).  

 Let $\fh$ be the Cartan subalgebra of diagonal matrices in $\fg$, and let 
$\fh_{\fk}\subset\fk$ be the Cartan subalgebra of diagonal matrices in $\fk$.  We denote elements
of $\fh_{\fk}\times\fh$ by $(a, b)$, where $a=(a_{1},\dots, a_{r_{n-1}})$ and $b=(b_{1},\dots, b_{r_{n}})$ 
represent the diagonal coordinates of $a\in\fh_{\fk}$ and $b\in\fh$ as in Section \ref{ss:orthoreal} above.  Let $W=W(\fg,\fh)$ be the Weyl group of $\fg$, and let $W_K=W(\fk,\fh_{\fk})$ be the Weyl group of $\fk$.  
For $i=1,\dots, r_{n-1}$ define: 
$$
(\fh_{\fk}\times \fh)(\geq i):=\{(a,b): \exists \, v\in W_K, \, u\in W \mbox{ such that } (v\cdot a)_{j}=(u\cdot b)_{j},\, j=1,\dots, i\}.  
$$
We note that 
$(\fh_{\fk}\times \fh)(\geq i)$ is a $W_K\times W$-invariant closed subvariety of $\fh_{\fk}\times\fh$ 
and is equidimensional of codimension $i$.  
Let $p_G:\fh\to \fh/W$ and $p_K:\fh_{\fk} \to \fh_{\fk}/W_K$ 
be the invariant theory quotients.
Consider the finite morphism $p:=p_K\times p_G: (\fh_{\fk}\times\fh)\to  (\fh_{\fk}\times\fh)/ (W_K\times W)$.  
%The variety $p((\fh_{n-1}\times\fh_{n})(\geq l))=(\fh_{n-1}\times\fh_{n})(\geq l)/ (W_K\times W)$ 
%is a closed equidimensiional subvariety of 
%$(\fh_{n-1}\times\fh_{n})/ (W_K\times W)$
% with  dimension 
%$\dim (\fh_{n-1}\times\fh_{n})(\geq l)$.  
Let $F_{G}: \fh/W \to \C^{r_{n}}$ and $F_{K}: \fh_{\fk}/W_{K}\to \C^{r_{n-1}}$ be the Chevalley isomorphisms, and let 
$V^{r_{n-1},r_{n}}:=\C^{r_{n-1}}\times\C^{r_{n}}$, so that 
$F_{K}\times F_{G}: (\fh_{\fk}\times\fh)/ (W_K\times W) \to V^{r_{n-1},r_{n}}$ is an isomorphism.  The following varieties play a major role in our study of orthogonal eigenvalue coincidences.

\begin{dfn}\label{ref:thenotation}
For $i=0,\dots, r_{n-1}$, we let 
\begin{equation}\label{eq:Vgel}
V^{r_{n-1},r_{n}}(\geq i):=(F_{K}\times F_{G})((\fh_{\fk}\times\fh)(\geq i)/(W_K\times W)),
\end{equation} 
 \begin{equation}\label{eq:Vl}
  V^{r_{n-1},r_{n}}(i):=V^{r_{n-1},r_{n}}(\geq i)\setminus V^{r_{n-1},r_{n}}(\geq i+1).
  \end{equation}
For convenience, we let $V^{r_{n-1},r_{n}}( r_{n-1}+1)=\emptyset$.
\end{dfn}

\begin{lem}\label{lem:vnirreducible}
The set $V^{r_{n-1},r_{n}}(\geq i)$ is an irreducible closed subvariety of $V^{r_{n-1},r_{n}}$ of
dimension $r_{n}+r_{n-1}-i$.   Further, $V^{r_{n-1},r_{n}}(i)$ is open and dense 
in $V^{r_{n-1},r_{n}}(\geq i)$.
\end{lem}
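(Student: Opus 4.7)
My approach is to realize $(\fh_\fk \times \fh)(\geq i)$ as the $(W_K \times W)$-saturation of a single linear subspace, and then push this picture down through the finite quotient. For each $(v,u) \in W_K \times W$ I set
\begin{equation*}
Z_{v,u} := \{(a,b) \in \fh_\fk \times \fh : (v\cdot a)_j = (u\cdot b)_j,\ j=1,\dots, i\},
\end{equation*}
so that by definition $(\fh_\fk \times \fh)(\geq i) = \bigcup_{(v,u)} Z_{v,u}$. Because the Weyl groups act by signed permutations, a direct computation gives $(v',u')\cdot Z_{v,u} = Z_{v(v')^{-1}, u(u')^{-1}}$, so each $Z_{v,u}$ is a $(W_K\times W)$-translate of the reference subspace $Z_{e,e} = \{(a,b) : a_j = b_j,\ j=1,\dots, i\}$. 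The latter is an irreducible closed linear subspace of codimension exactly $i$, since $i\leq r_{n-1}\leq r_{n}$ makes the $i$ equations $a_j - b_j = 0$ involve disjoint pairs of coordinates and hence linearly independent.

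The next step exploits equivariance of $p$: since $(\fh_\fk \times \fh)(\geq i) = (W_K\times W)\cdot Z_{e,e}$ and $p$ is constant on $(W_K\times W)$-orbits, we obtain $p((\fh_\fk \times \fh)(\geq i)) = p(Z_{e,e})$. Composing with the isomorphism $F_K \times F_G$ yields
\begin{equation*}
V^{r_{n-1},r_{n}}(\geq i) = ((F_K \times F_G) \circ p)(Z_{e,e}).
\end{equation*}
Because $(F_K \times F_G) \circ p$ is a finite morphism — hence closed and dimension preserving — and $Z_{e,e}$ is irreducible and closed of dimension $r_{n-1} + r_{n} - i$, its image is irreducible, closed, and of the same dimension. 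This proves the first sentence of the lemma.

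For the second sentence, openness of $V^{r_{n-1},r_{n}}(i)$ in $V^{r_{n-1},r_{n}}(\geq i)$ is immediate from the definition, as $V^{r_{n-1},r_{n}}(i)$ is the complement of the closed subvariety $V^{r_{n-1},r_{n}}(\geq i+1)$. Density then follows by a dimension comparison: for $i = r_{n-1}$ the complement is empty by the convention $V^{r_{n-1},r_{n}}(r_{n-1}+1)=\emptyset$ and density is vacuous, while for $i < r_{n-1}$ the first part of the lemma already gives $\dim V^{r_{n-1},r_{n}}(\geq i+1) = r_{n-1}+r_{n}-(i+1)$, which is strictly less than $\dim V^{r_{n-1},r_{n}}(\geq i)$; the open complement of a proper closed subset of an irreducible variety is automatically dense.

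I do not anticipate a serious obstacle. The only mild point of vigilance is verifying that the $i$ linear relations defining $Z_{e,e}$ are independent so that the codimension is exactly $i$; but this is transparent from $i\le r_{n-1}$ and the disjointness of the coordinates involved. Everything else reduces to standard properties of finite group quotients together with the fact that the Chevalley map $F_K\times F_G$ is an isomorphism.
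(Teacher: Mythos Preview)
Your proof is correct and follows essentially the same approach as the paper: both arguments introduce the linear subspace $Z_{e,e}=\{(a,b):a_j=b_j,\ j=1,\dots,i\}$ (called $Y$ in the paper), observe it is irreducible closed of dimension $r_{n-1}+r_n-i$, and then push it forward through the finite morphism $(F_K\times F_G)\circ p$ to obtain the desired properties of $V^{r_{n-1},r_n}(\geq i)$. You supply more detail than the paper, in particular the translation argument explaining why $p\big((\fh_\fk\times\fh)(\geq i)\big)=p(Z_{e,e})$ and the explicit dimension comparison for density, but the underlying strategy is identical.
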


\begin{proof}
Indeed, the set 
\begin{equation}\label{eq:Ydfn}
Y:= \{(a,b)\in \fh_{\fk}\times\fh : a_{j}=b_{j} \mbox{ for }
j=1, \dots, i\}
\end{equation}
is closed and irreducible of dimension $r_{n}+r_{n-1}-i$.   The
first assertion follows 
 since $(F_{K}\times F_{G})\circ p$ is a finite morphism and 
$(F_{K}\times F_{G})\circ p (Y)=V^{r_{n-1}, r_{n}}(\geq i)$.
The last assertion of the lemma now follows from Equation (\ref{eq:Vl}).
%The remainder follows by similar arguments.
\end{proof}
%\begin{prop}\label{prop:quotienteigenvalue}
We define 
\begin{equation}\label{eq:gidfn}
\fg(\geq i):=\Phi_{n}^{-1}(V^{r_{n-1}, r_{n}} (\geq i)), 
\end{equation}
(recall Equation (\ref{eq:partial}) for the definition $\Phi_n$).
%\end{prop}

For $i=0,\dots, r_{n-1}$, we define
 \begin{equation}\label{eq:fgl}
  \fg(i):=\fg(\geq i) \setminus \fg(\geq i+1) = \Phi_n^{-1}(V^{r_{n-1},r_{n}}(i)).
  \end{equation}
 Note that we have a partition of $\fg$ into disjoint locally closed sets:
 \begin{equation}\label{eq:fgipart}
 \fg=\displaystyle\bigcup_{i=0}^{r_{n-1}} \fg(i). 
 \end{equation}
  
  \begin{rem}\label{r:thesame}
We show that the definition of $\fg(\geq i)$ in (\ref{eq:gidfn}) agrees 
  with the one we gave in (\ref{eq:coincidences}).  We recall that 
  $\Phi_{n}=(\chi_{n-1},\chi_{n}),$ where $\chi_{i}:\fso(i,\C)\to \C^{r_{i}}$ is the 
  adjoint quotient.  Let $\fg=\fso(2l,\C)$ and let $x\in \fg$ satisfy the
property of Equation (\ref{eq:coincidences}).   Then there is $g\in G$
and $k\in K$ such that $\Ad(g)x$ and $\Ad(k)x_{\fk}$ are upper
triangular, $\Ad(g)x$ has diagonal part $\mbox{diag}[b_1, \dots, b_{l}, -b_{l}, \dots, -b_1]$, $\Ad(k)x$ has diagonal part
$\mbox{diag}[a_{1},\dots, a_{l-1}, 0, 0,-a_{l-1},\dots, -a_{1}]$, and
$b_{j_1}=\pm a_{k_1}, \dots, b_{j_i}=\pm a_{k_i}$.  We claim $\Phi_{n}(x)\in V^{r_{n-1}, r_{n}}(\geq i)$.  
Note that $\Phi_{n}(x)=(F_{K}\times F_{G})\circ p((a_{1},\dots, a_{l-1} ), (b_{1},\dots, b_{l})).$  Since $W$ contains
the subgroup $S_l$, and $W_K$ contains the subgroup $S_{l-1}$, we have
 $$p((a_{1},\dots, a_{l-1} ), (b_{1},\dots, b_{l}))=p((a_{1},\dots, a_{l-1}), (\pm a_{1},\dots, \pm a_{i}, b_{i+1},\dots, b_{l})).$$  Since $W_K$
contains all sign changes of the coordinates of $\fh_{\fk}$, it follows that that 
\begin{equation*}
\begin{split}
 p(( a_{1},\dots,  a_{i}, \dots,  a_{l-1}), (\pm a_{1},\dots, \pm a_{i}, b_{i+1},\dots, b_{l}))&= \\
p((\pm a_{1},\dots,\pm a_{i},\dots, a_{l-1}), ( \pm a_{1},\dots, \pm a_{i}, b_{i+1},\dots, b_{l})).
\end{split}
\end{equation*}
  It now follows that 
$\Phi_{n}(x)\in (F_{K}\times F_{G}) \circ p(Y)=V^{r_{n-1}, r_{n}}(\geq i)$,  where $Y$ is the variety defined in (\ref{eq:Ydfn}).  Thus, $x\in \fg(\ge i)$.  We leave the converse to the reader.  The case of $\fg=\fso(2l+1, \C)$ follows by similar reasoning.
  \end{rem}
  
%  Let $\fg=\fgl(n,\C)$.  For $x\in \fg$, decompose $x=x_{\fk} + x_{\fp}$ as
%for $\fso(n,\C)$, and let $\fg(\ge i)$ be the $x$ in $\fg$ such that 
%$x$ and $x_{\fk}$ share at least $i$ eigenvalues. 
% See \cite{CEeigen} for analogous definitions.

  %I AM NOT SURE I LIKE SAYING THAT THE ADJOINT QUOTIENT ASSIGNS TO AN ELEMENT ITS SPECTRUM.
  
  We now use the flatness of the Kostant-Wallach morphism asserted in Proposition \ref{prop:flat}
  to study the varieties $\fg(\geq i)$.

\begin{prop}\label{prop_dimgl}
\begin{enumerate}
\item The variety $\fg(\ge i)$ is equidimensional of dimension $\dim \fg-i$.
\item $ \overline{\fg(i)} = \fg(\ge i) = \displaystyle\bigcup_{k\geq i} \fg(k)$.
\end{enumerate}
\end{prop}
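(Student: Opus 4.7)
The plan is to derive both parts from Proposition~\ref{prop:flat}(3) together with Lemma~\ref{lem:vnirreducible}, using standard dimension theory for flat morphisms. The input is that $\Phi_n \colon \fg \to \C^{r_{n-1}} \oplus \C^{r_n}$ is flat with equidimensional fibres of dimension $\dim \fg - r_{n-1} - r_n$, while $V^{r_{n-1},r_n}(\geq i)$ is irreducible of dimension $r_{n-1} + r_n - i$ with $V^{r_{n-1},r_n}(i)$ as a dense open subset.

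For part (1), I would first note that the restriction
\[
\Phi_n|_{\fg(\geq i)} \colon \fg(\geq i) \longrightarrow V^{r_{n-1},r_n}(\geq i)
\]
is flat, being a base change of a flat morphism, and hence open (flat morphisms of finite type are open). I would then argue that every irreducible component $Z$ of $\fg(\geq i)$ dominates $V^{r_{n-1},r_n}(\geq i)$: the complement $U = Z \setminus \bigcup_{Z' \neq Z} Z'$ of the other components in $Z$ is a non-empty open subset of $\fg(\geq i)$, so its image under $\Phi_n|_{\fg(\geq i)}$ is a non-empty open subset of $V^{r_{n-1},r_n}(\geq i)$, which is dense by irreducibility. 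Applying the standard dimension formula for flat morphisms (with the fibres of $\Phi_n|_Z$ contained in those of $\Phi_n$, whose dimension is constant) yields
\[
\dim Z \;=\; \dim V^{r_{n-1},r_n}(\geq i) + (\dim \fg - r_{n-1} - r_n) \;=\; \dim \fg - i
\]
for each component $Z$, proving equidimensionality.

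For part (2), the identity $\fg(\geq i) = \bigcup_{k \geq i} \fg(k)$ follows immediately from the definition of $\fg(k)$ as $\fg(\geq k) \setminus \fg(\geq k+1)$ together with the descending chain $\fg(\geq i) \supset \fg(\geq i+1) \supset \cdots \supset \fg(\geq r_{n-1}+1) = \emptyset$. For the closure equality $\overline{\fg(i)} = \fg(\geq i)$, I would apply part~(1): $\fg(\geq i+1)$ is equidimensional of dimension $\dim \fg - i - 1$, strictly smaller than $\dim \fg - i$, so no irreducible component of $\fg(\geq i)$ can be contained in $\fg(\geq i+1)$. Consequently the open subset $\fg(i) = \fg(\geq i) \setminus \fg(\geq i+1)$ meets every irreducible component of $\fg(\geq i)$ in a non-empty, hence dense, open subset, and taking closures yields $\overline{\fg(i)} = \fg(\geq i)$.

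The main delicate point is the bookkeeping in part~(1): one must invoke both openness of flat morphisms of finite type (to get domination of $V^{r_{n-1},r_n}(\geq i)$ by every component) and the dimension formula for a flat morphism onto an irreducible base with equidimensional fibres. Both are standard, and once they are in hand the proof is short; part~(2) is then a routine consequence of the equidimensionality established in part~(1).
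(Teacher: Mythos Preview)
Your argument is correct. Part~(1) is essentially the paper's proof unpacked: the paper simply cites Hartshorne~III.9.5 and~III.9.6, and what you have written is the content of the proof of III.9.6 (openness of flat finite-type morphisms, plus the local dimension formula $\dim_z \fg(\ge i)=\dim_{\Phi_n(z)} V^{r_{n-1},r_n}(\ge i)+\dim_z \Phi_n^{-1}(\Phi_n(z))$ for flat $\Phi_n$). Your detour through ``every component dominates the base'' is not strictly needed once you use the local formula, but it does no harm.

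Part~(2) is where you genuinely diverge. The paper invokes SGA1~VIII.4.1, which says that for a flat morphism the preimage of a closure equals the closure of the preimage, and applies it to $V^{r_{n-1},r_n}(i)\subset V^{r_{n-1},r_n}(\ge i)$. You instead feed the equidimensionality from part~(1) back in: since $\fg(\ge i+1)$ has strictly smaller dimension than every component of $\fg(\ge i)$, the open set $\fg(i)$ meets each component and is therefore dense. Your route is more self-contained (no SGA citation) and makes clear that part~(2) is really a corollary of part~(1); the paper's route is slightly more conceptual in that it explains \emph{why} the density holds (flatness preserves closure under pullback) rather than deducing it from a dimension count. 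Either is perfectly adequate here.
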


\begin{proof}
By Proposition \ref{prop:flat}, the morphism $\Phi_n$ is flat.
By Proposition III.9.5 and Corollary III.9.6 of \cite{Ha}, the variety
$\fg(\geq i)$ is equidimensional of dimension 
$\dim (V^{r_{n}-1,r_{n}}(\geq i))+\dim \fg-r_{n}-r_{n-1}$, which
gives the first assertion by Lemma \ref{lem:vnirreducible}.
 For the second assertion, 
by the flatness of $\Phi_n$, Theorem VIII.4.1 of \cite{SGA1},
and Lemma \ref{lem:vnirreducible},
\begin{equation}\label{eq:glbar}
\overline{\fg(i)}=\overline{\Phi_{n}^{-1}(V^{r_{n-1},r_{n}}(i))}=\Phi_{n}^{-1}(\overline{V^{r_{n-1},r_{n}}(i)})=\Phi_{n}^{-1}(V^{r_{n-1},r_{n}}(\geq i))=\fg(\geq i).                        
 \end{equation}
The remaining equality follows since $V^{r_{n-1},r_{n}}(\ge i)=\displaystyle \cup_{k\ge i} V^{r_{n-1},r_{n}}(k).$
\end{proof}

\subsection{The varieties $Y_{Q}$} \label{ss:YQ}
We now study the geometry of the varieties $Y_{Q}=\Ad(K)\fb$ for a $K$-orbit $Q=K\cdot \fb$ in $\B$.  
We begin by studying more general objects $Y_{Q_{\fr}}$, where $Q_{\fr}$ is a $K$-orbit in a partial flag variety. 
 
For a parabolic subgroup $P\subset G$ with Lie algebra $\fp\subset\fg$,
consider the partial Grothendieck resolution
$\tilde{\fg}^{\fp}=\{ (x,\fr)\in \fg \times G/P \; | \; x\in\fr\}$,
as well as the morphisms  
$\mu:\tilde{\fg}^{\fp}\to \fg,\; \mu(x,\fr)=x$, and $\pi: \tilde{\fg}^{\fp}\to G/P,\; \pi(x,\fr)=\fr$.  
 For $\fr \in G/P$, let $Q_{\fr}=K\cdot \fr \subset G/P$.
It is well-known that $\pi$ is a smooth morphism of relative dimension $\dim \fp$, and $\mu$ is proper 
with generically finite restriction to $\pi^{-1}(Q_{\fr})$ (see p. 622 of \cite{CEeigen}).
Thus, $\pi^{-1}(Q_{\fr})$ has dimension $\dim(Q_{\fr}) + \dim(\fr)$.

\begin{nota} For a parabolic subalgebra $\fr$ with $K$-orbit $Q_{\fr}\subset G/P$, we consider the irreducible
subset
\begin{equation}\label{eq:Yr}
Y_{\fr}:=\mu(\pi^{-1}(Q_{\fr}))=\Ad(K)\fr.
\end{equation}
$Y_{\fr}$ depends only on $Q_{\fr}$, and we will also denote this set as
\begin{equation}\label{eq:YQps}
Y_{Q_{\fr}}:=Y_{\fr}.
\end{equation}
\end{nota}

 It follows from generic finiteness of $\mu$ that $Y_{Q_{\fr}}$
contains an open subset of dimension
\begin{equation}\label{eq:YQdim}
\dim(Y_{Q_{\fr}}):=\dim \pi^{-1}(Q_{\fr})=\dim \fr+\dim(Q_{\fr})=\dim\fr+\dim (\fk/ \fk\cap \fr).
\end{equation}

\begin{rem} \label{r:closedorbitcase}
Since $\mu$ is proper, the set $Y_{Q_{\fr}}$ is closed when 
$Q_{\fr}=K\cdot \fr$ is closed in $G/P$.
\end{rem}
\begin{rem}\label{r:Qstrat}
Note that
$$
\fg=\bigcup_{Q\subset G/P} Y_{Q},
$$
where the union 
is taken over the finitely many $K$-orbits in $G/P$. 
\end{rem}

\begin{lem} \label{l:YQclosure}
Let $Q \subset G/P$ be a $K$-orbit.   Then
\begin{equation}
 \overline{Y_{Q}}=\bigcup_{Q^{\prime}\subset \overline{Q}} Y_{Q^{\prime}}.
\end{equation}
\end{lem}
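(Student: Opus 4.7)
The plan is to exploit the diagram
\[
G/P \;\stackrel{\pi}{\longleftarrow}\; \tilde{\fg}^{\fp} \;\stackrel{\mu}{\longrightarrow}\; \fg
\]
through the two properties already recorded in the paper: $\mu$ is proper, and $\pi$ realizes $\tilde{\fg}^{\fp}$ as a locally trivial affine bundle over $G/P$ with fiber isomorphic to $\fp$. Three short steps then combine to yield the claim.

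First, I would use properness of $\mu$ to establish $\overline{Y_Q} = \mu(\overline{\pi^{-1}(Q)})$: the inclusion $\subset$ follows because $\mu$ is closed, so $\mu(\overline{\pi^{-1}(Q)})$ is a closed set containing $Y_Q = \mu(\pi^{-1}(Q))$, while $\supset$ follows from continuity of $\mu$. Next, I would show $\overline{\pi^{-1}(Q)} = \pi^{-1}(\overline{Q})$. Since $\pi$ is a locally trivial affine bundle, its restriction to the irreducible base $\overline{Q}$ is again an affine bundle, so $\pi^{-1}(\overline{Q})$ is irreducible; the nonempty Zariski-open subset $\pi^{-1}(Q)$ is therefore dense there, giving the equality.

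Finally, the standard orbit stratification $\overline{Q} = \bigsqcup_{Q' \subset \overline{Q}} Q'$ by $K$-orbits in the closure is preserved by $\pi^{-1}$, and applying $\mu$ yields
\[
\overline{Y_Q} \;=\; \mu\bigl(\pi^{-1}(\overline{Q})\bigr) \;=\; \bigcup_{Q' \subset \overline{Q}} \mu\bigl(\pi^{-1}(Q')\bigr) \;=\; \bigcup_{Q' \subset \overline{Q}} Y_{Q'},
\]
as desired. None of the three steps is a serious obstacle; the only point requiring a moment's care is the irreducibility of $\pi^{-1}(\overline{Q})$ in the density argument, which is immediate from the local triviality of $\pi$ once one recalls that $\overline{Q}$ is irreducible.
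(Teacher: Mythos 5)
Your proof is correct, and its skeleton is the same as the paper's: establish $\overline{\pi^{-1}(Q)}=\pi^{-1}(\overline{Q})$, then push forward by the proper map $\mu$ and use the orbit decomposition of $\overline{Q}$. The only real difference is how the middle equality is justified. The paper observes that $\pi$ is smooth, hence flat, and invokes Theorem VIII.4.1 of SGA1, which says that for a flat morphism the preimage of a closure equals the closure of the preimage; you instead use the fact that $\pi$ is a ($G$-homogeneous, Zariski locally trivial) affine bundle, so that $\pi^{-1}(\overline{Q})$ is irreducible and closed and the nonempty open subset $\pi^{-1}(Q)$ is dense in it. Your route is more elementary and self-contained for this particular $\pi$ (it needs only that $K$-orbits are irreducible and open in their closures), while the paper's flatness argument is the one that generalizes beyond the bundle situation — indeed the same SGA1 statement is reused elsewhere in the paper for the flat but non-bundle morphism $\Phi_n$ (Proposition \ref{prop_dimgl}). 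Either way, the concluding steps — $\mu$ proper gives $\overline{Y_Q}=\mu(\overline{\pi^{-1}(Q)})$, and $\overline{Q}=\bigcup_{Q'\subset\overline{Q}}Q'$ gives the stated union — are exactly as in the paper.
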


\begin{proof}
Since $\pi$ is a smooth morphism, it is flat by Theorem III.10.2 of \cite{Ha}.  Thus,
by Theorem VIII.4.1 of \cite{SGA1}, 
$\pi^{-1}(\overline{Q})=\overline{\pi^{-1}(Q)}$.   The result follows
since $\mu$ is proper.
\end{proof}

%\section{Comparison of the varieties $\fg(\geq l)$ and the varieties $Y_{Q}$}
%DO YOU WANT A BETTER TRANSITION HERE? 

\begin{prop}\label{prop:dimYQ}
Let $Q=K\cdot \fb$ be a $K$-orbit in $\B$ with $\mbox{codim}(Q)=i$.  Then 
\begin{equation}\label{eq:dimYQ}
\dim Y_{Q}=\dim \fg(\geq i).
\end{equation}
\end{prop}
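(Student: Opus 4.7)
The plan is to assemble the dimension of $Y_Q$ from Equation (\ref{eq:YQdim}) and match it against the dimension of $\fg(\geq i)$ coming from Proposition \ref{prop_dimgl}(1). This is essentially a bookkeeping argument, so the proof should be short once the ingredients are in place.

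First, I would invoke (\ref{eq:YQdim}) applied to the parabolic $P = B$ (a Borel), which gives
\[
\dim Y_Q = \dim \fb + \dim Q.
\]
Since $Q \subset \mathcal{B}$ has codimension $i$, we have $\dim Q = \dim \mathcal{B} - i$, so
\[
\dim Y_Q = \dim \fb + \dim \mathcal{B} - i.
\]
Next, using the standard identity $\dim \mathcal{B} = \dim G/B = \dim \fg - \dim \fb$, the right hand side collapses to $\dim \fg - i$.

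On the other hand, Proposition \ref{prop_dimgl}(1) already tells us that $\fg(\geq i)$ is equidimensional of dimension $\dim \fg - i$. Comparing the two expressions yields the desired equality $\dim Y_Q = \dim \fg(\geq i)$.

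There is no real obstacle here; the content lies entirely in having the dimension formula (\ref{eq:YQdim}) for $Y_Q$ (which rests on the generic finiteness of $\mu$ over $\pi^{-1}(Q)$) and the flatness-based dimension count of Proposition \ref{prop_dimgl}. The only thing worth flagging is that when $P = B$ is a Borel, $\fk \cap \fb$ is a Borel subalgebra of $\fk$ when $Q$ is the open orbit, but for general $Q$ we just use that $\dim Q = \dim(\fk/\fk \cap \fb)$ without needing any further structural information; the identity $\dim \fb + \dim \mathcal{B} = \dim \fg$ is what actually drives the coincidence.
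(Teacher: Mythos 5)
Your argument is correct and is exactly the paper's proof: apply Equation (\ref{eq:YQdim}) in the Borel case to get $\dim Y_Q = \dim Q + \dim \fb = \dim\B - i + \dim\fb = \dim\fg - i$, then compare with the equidimensionality statement of Proposition \ref{prop_dimgl}(1). Nothing further is needed.
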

\begin{proof}
By Equation (\ref{eq:YQdim}), it follows that 
$$\dim Y_{Q}=\dim Q+\dim \fb = \dim(\B) - i + \dim(\fb) = \dim(\fg) - i.$$ 
The assertion follows by part (1) of Proposition \ref{prop_dimgl}.
\end{proof}

%% ?? The previous proof of this was longer.  Am I missing something.

Let $\codim(Q)=i$. To see that $\overline{Y_{Q}}$ is an 
irreducible component of $\fg(\geq i)$,  
it remains to show that $\overline{Y_{Q}}\subset\fg(\geq i)$.  For this, it is convenient to replace the $K$-orbit $Q$ in $\B$ 
with a $K$-orbit $Q_{\fr}$ of a $\theta$-stable parabolic subalgebra 
$\fr\supset \fb$ in a partial flag variety $G/P$.  We will show that $\fr\in G/P$ can be chosen 
so that $\overline{Y_{Q}}=Y_{Q_{\fr}}$, and $Y_{Q_{\fr}}\subset \fg(\geq i)$.  
The first step is to relate the geometry of $Q$ and $Q_{\fr}$ for a general $\theta$-stable $\fr$ with $\fr\supset\fb$ and develop a necessary condition 
for $\overline{Y_{Q}}=Y_{Q_{\fr}}$.  Let $R$ be the parabolic subgroup of $G$ with Lie algebra $\fr$.  Consider the canonical fibre bundle:
$$
R/B\to \B\stackrel{p}{\to} G/R,
$$
which induces a bundle 
\begin{equation}\label{eq:Qfibr}
Q\cap p^{-1}(Q_{\fr})\to Q\to Q_{\fr}
\end{equation}
over the $K$-orbit $Q_{\fr}$.
To study the fibre bundle (\ref{eq:Qfibr}), we consider the $\theta$-stable
parabolic subalgebra $\fr$ in more detail.  
It follows from Theorem 2 of \cite{BH} that $\fr$ has a $\theta$-stable Levi decomposition 
$\fr=\fl\oplus\fu$.  The Levi subalgebra decomposes as $\fl=\fz\oplus\fl_{ss}$, 
with the centre $\fz$ and the semisimple part $\fl_{ss}=[\fl,\fl]$ both $\theta$-stable.  Further, $\fk\cap\fr$ is a parabolic 
subalgebra of $\fk$ with Levi decomposition 
$$\fk\cap\fr=\fk\cap\fl\oplus\fk\cap\fu=\fk\cap\fz\oplus\fk\cap\fl_{ss}\oplus\fk\cap\fu.$$
  The corresponding parabolic subgroup $R$ is also $\theta$-stable, and 
$K\cap R$ is a parabolic subgroup of $K$ with Levi decomposition 
$(K\cap Z)\cdot (K\cap L_{ss})\cdot K\cap U$ (see Theorem 2, \cite{BH}).  
In particular, $Q_{\fr}\cong K/(K\cap R)$ is closed.  Recall that 
$R/B  \cong \B_{\fl_{ss}}$.  Thus, the fibre bundle 
 (\ref{eq:Qfibr}) gives the $K$-orbit $Q$ on $\B$ the structure of a $K$-homogeneous 
 fibre bundle over the closed $K$-orbit $Q_{\fr}=K\cdot\fr$ in $G/R$ with fibre 
 the $K\cap L_{ss}$-orbit of $\fb$ in $R/B\cong \B_{\fl_{ss}}$, i.e.:
\begin{equation}\label{eq:bundle}
Q\cong K\times_{K\cap R} (K\cap L_{ss} )\cdot \fb.
\end{equation}
%where $K\cap R=(K\cap L_{ss})\cdot (K\cap Z)\cdot (K\cap U)$, and $K\cap Z$ and $K\cap U$ act trivially on
%$\fb$.  
%WHY IS $K\cap L_{ss}$ ACTUALLY A SYMMETRIC SUBALGEBRA HERE CHECK \cite{BH}. 

\begin{prop}\label{prop:dimYQs}
Suppose that the orbit $(K\cap L_{ss})\cdot \fb$ in (\ref{eq:bundle}) is open in $R/B\cong \B_{\fl_{ss}}$.  
Then $\dim Y_{\fb}=\dim Y_{\fr}$.  Further, $Y_{\fr}$ is a closed, irreducible subvariety of $\fg$, 
so that $\overline{Y_{\fb}}= Y_{\fr}$.
\end{prop}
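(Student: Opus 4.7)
The plan is to compute both dimensions using the bundle structure (\ref{eq:bundle}) and the dimension formula (\ref{eq:YQdim}), and then argue closedness and irreducibility of $Y_\fr$ separately, after which the equality $\overline{Y_\fb}=Y_\fr$ will follow by a containment plus dimension argument.

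First, I would extract the following two identities. From (\ref{eq:YQdim}), $\dim Y_\fb = \dim Q + \dim \fb$ and $\dim Y_\fr = \dim Q_\fr + \dim \fr$. Since $\fr=\fl\oplus\fu\supset\fb$ with $\fl=\fz\oplus\fl_{ss}$, we have $\dim\fr-\dim\fb = \dim(R/B) = \dim \B_{\fl_{ss}}$. On the other hand, the bundle (\ref{eq:bundle}) gives $\dim Q = \dim Q_\fr + \dim\bigl((K\cap L_{ss})\cdot\fb\bigr)$, and the openness hypothesis on $(K\cap L_{ss})\cdot\fb$ in $R/B\cong\B_{\fl_{ss}}$ forces $\dim((K\cap L_{ss})\cdot\fb)=\dim\B_{\fl_{ss}}$. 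Adding these contributions yields $\dim Y_\fb = (\dim Q_\fr+\dim\B_{\fl_{ss}})+\dim\fb = \dim Q_\fr + \dim\fr = \dim Y_\fr$.

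Next, I would verify that $Y_\fr$ is a closed irreducible subvariety of $\fg$. As noted in the excerpt, $\fr$ being $\theta$-stable makes $R$ a $\theta$-stable parabolic, so $K\cap R$ is a parabolic subgroup of $K$ and hence $Q_\fr\cong K/(K\cap R)$ is closed in $G/R$. Then Remark \ref{r:closedorbitcase} applied to $P=R$ gives that $Y_\fr=\mu(\pi^{-1}(Q_\fr))$ is closed in $\fg$. Irreducibility is automatic: since $\pi:\widetilde{\fg}^{\fr}\to G/R$ is smooth (hence flat with irreducible fibres) and $Q_\fr$ is irreducible, $\pi^{-1}(Q_\fr)$ is irreducible, and $Y_\fr$ is its image under the morphism $\mu$.

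Finally, since $\fb\subset\fr$ we have $Y_\fb=\Ad(K)\fb\subset\Ad(K)\fr = Y_\fr$, and $Y_\fr$ is closed, so $\overline{Y_\fb}\subset Y_\fr$. As $Y_\fr$ is irreducible and $\overline{Y_\fb}$ is an irreducible closed subset of $Y_\fr$ of the same dimension by the first paragraph, equality $\overline{Y_\fb}=Y_\fr$ follows. No step here is really an obstacle; the only place where the hypothesis is used is in identifying $\dim((K\cap L_{ss})\cdot \fb)$ with $\dim \B_{\fl_{ss}}$, which is essential since otherwise the fibre of the bundle (\ref{eq:bundle}) would contribute a smaller dimension and $\overline{Y_\fb}$ would be a proper subvariety of $Y_\fr$.
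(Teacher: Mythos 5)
Your proposal is correct and follows essentially the same route as the paper: the identical dimension count via (\ref{eq:YQdim}) and the bundle (\ref{eq:bundle}) with the openness hypothesis supplying $\dim((K\cap L_{ss})\cdot\fb)=\dim\B_{\fl_{ss}}$, closedness of $Y_{\fr}$ from Remark \ref{r:closedorbitcase} since $Q_{\fr}$ is closed, and then the containment-plus-dimension argument inside the irreducible variety $Y_{\fr}$. The only difference is cosmetic: you spell out the irreducibility of $Y_{\fr}$ and the final equal-dimension step, which the paper leaves implicit.
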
  
\begin{proof}
Indeed,
\begin{equation}
\begin{split}
\dim Y_{\fr}&=\dim Q_{\fr}+\dim \fr\;  (\mbox{by } (\ref{eq:YQdim}))\\
&=\dim Q-\dim \B_{\fl_{ss}} +\dim \fr \; (\mbox{by } (\ref{eq:bundle})) \\
&=\dim Q-\dim \B_{\fl_{ss}}+\dim \B_{\fl_{ss}}+\dim \fb\\
&= \dim Q+\dim\fb\\
&= \dim Y_{\fb} \;(\mbox{by } (\ref{eq:YQdim})). 
\end{split}
\end{equation}
 It follows from definitions that $Y_{\fb}\subset Y_{\fr}$.  Since $Q_{\fr}$ is closed, $Y_{\fr}$ is closed by Remark \ref{r:closedorbitcase}.  Thus, $\overline{Y_{\fb}}=Y_{\fr}$ since $Y_{\fr}$ is irreducible. 
\end{proof}

In Theorems \ref{thm:YQs2} and \ref{thm:YQs1}, we show that
 for any Borel subalgebra $\fb \in \B$ whose orbit $K\cdot \fb$
has codimension $i$,
there is a $\theta$-stable parabolic subalgebra $\fr$ with $\fb\subset\fr$ such 
that the hypothesis of Proposition \ref{prop:dimYQs} is satisfied, and $Y_{Q_{\fr}}\subset \fg(\geq i)$.  To do this, 
we need to classify the $K$-orbits on $\B$ and develop explicit descriptions of representatives
of the $K$-orbits on $\B$. 

\begin{rem}\label{r:itholds}
When $\fg=\fgl(n,\C)$ and $\fk=\fgl(n-1,\C)\oplus\fgl(1,\C)$ is the symmetric subalgebra of block diagonal matrices, we have shown that
 for any Borel subalgebra $\fb\subset\fg$, there is a $\theta$-stable parabolic subalgebra $\fr$ with $\fb\subset\fr$ so that $(K\cap L_{ss})\cdot \fb$ is open in $\B_{\fl_{ss}}$.  This is implicit in Lemma 3.5 of \cite{CEKorbs} and in the computations of Proposition 2.15 of \cite{CEeigen}.
%% SAM: DO WE WANT TO MAKE THIS MORE EXPILICIT ??
\end{rem}

%To do this, we need an explicit description 
%of the $K$-orbits on $\B$.    

%We have to consider the case where $\fg$ is of type $B_{r_{n}}$ and type 
%$D_{r_{n}}$ separately. 

%, so that every eigenvalue of 
%$x$ including possible zero has even mulitplicity.  
%For the purposes of this paper, we will only consider zero to be an eigenvalue of $x$ with ev

\subsection{Description of $K$-orbits on $\B$ in the orthogonal case}\label{ss:Korbits}

We classify the $K$-orbits on $\B$ for $\fg=\fso(n,\C)$ and $\fk=\fso(n-1,\C)$.  In particular,
we explain how to recover the orbit diagrams from Figure 4.3 of 
\cite{Collingwood}, 
but also give explicit representatives of each orbit for later use.  
%(For a description of the $K$-orbits for $\fg=\fgl(n,\C)$ and $\fk=\fgl(n-1,\C)\oplus\fgl(1,\C)$ see Examples 4.16, 4.24, 4.30, and, 4.31 of \cite{CEKorbs}.  However, we will not need that material in the sequel. )

We begin by recalling some generalities regarding an involution $\theta$ of
a semisimple Lie algebra $\fg$ and orbits of $K=G^{\theta}$ on the
flag variety $\B$ of $\fg$ (see \cite{Mat79, RS, Vg,CEexp} for more details).
    Each Borel subalgebra $\fb$ of $\fg$
contains a $\theta$-stable Cartan subalgebra $\ft$.
Let $\Phi(\fg,\ft)$ denote the roots of $\ft$ in $\fg$,   let $\Phi_{\fb}^+ $
denote the roots of $\ft$ in $\fb$, which we take to be the positive roots.
Let $\fg_{\alpha}$ denote the root space for a root $\alpha$.
   Then the $\theta$-action on $\ft$
induces an action on $\Phi(\fg,\ft)$.   Using this action, we define the \emph{type} of a root $\alpha\in\Phi_{\fb}^+$ as follows.  A root $\alpha$ is called {\it real}
if $\theta(\alpha)=-\alpha$, {\it imaginary } if $\theta(\alpha)=\alpha$, 
and {\it complex } if $\theta(\alpha)\not= \pm \alpha.$   If $\alpha$ is
imaginary, then $\alpha$ is called {\it compact } if $\theta|_{\fg_\alpha}=\id$
and {\it noncompact} if $\theta|_{\fg_\alpha}=-\id.$    If $\alpha$ is complex, then $\alpha$ is called \emph{complex} $\theta$-{\it stable} if $\theta(\alpha)$
is positive, and otherwise is called \emph{complex} $\theta$-{\it unstable}.   
These notions do not depend on the choice of $\theta$-stable Cartan subalgebra 
$\ft\subset\fb$, nor on the choice of $\fb$ in the $K$-orbit $K\cdot \fb$.

%Note also that the type of a root is invariant under $K$-conjugation.  That is to say that $\alpha$ is real, compact, etc, if and only if $\Ad(k)\alpha:=\alpha\circ\Ad(k^{-1})\in \Phi_{K\cdot\fb}^+$ is real, compact, etc.  Thus, the type of a root depends only on the $K$-orbit $Q=K\cdot \fb$ of $\fb$.  

   %These notions depend only on the $K$-orbit $Q=K\cdot \fb \subset
%\B$, and not on the choice of representative $\fb$ of $Q$ nor or the choice of $\theta$-stable Cartan subalgebra $\ft \subset \fb$.

\begin{exam}\label{ex:roottypes}
Let $\fg=\fso(2l+1,\C)$ and $\fk=\fso(2l,\C)$, and let $\theta=\Ad(t)$ be 
as in Section \ref{ss:symmetricreal}.  Let $\Phi(\fg, \fh)$ be the set 
of standard roots of $\fg$ as in (\ref{eq:sooddroots}).  The roots $\{\pm(\epsilon_{i}-\epsilon_{j}),\, \pm(\epsilon_{i}+\epsilon_{j}), \;1\leq i < j\leq l\}$ are compact imaginary, and the roots $\{ \pm \epsilon_{i}\; i=1,\dots, l\}$ are non-compact imaginary. 

Now let $\fg=\fso(2l,\C)$ and $\fk=\fso(2l-1,\C)$ and $\theta$ is as in Section \ref{ss:symmetricreal}.  
Then the simple roots $\alpha_{l-1}=\epsilon_{l-1}-\epsilon_{l}$ and $\alpha_{l}=\epsilon_{l-1}+\epsilon_{l}$ 
are complex $\theta$-stable with $\theta(\alpha_{l-1})=\alpha_{l}$.  Note that we can choose
as a representative for $\theta$ a nontrivial element in the Weyl group
of $GL(\C e_l + \C e_{-l})$.  
Therefore, the roots $\{\pm(\epsilon_{i}+\epsilon_{j}),\,\pm (\epsilon_{i}-\epsilon_{j}),\, 1\leq i<j\leq l-1\}$ are compact imaginary, whereas the roots
$\{\pm(\epsilon_{i}+\epsilon_{l}), \pm(\epsilon_{i}-\epsilon_{l}),\, 1\leq i\leq l-1\}$ are complex $\theta$-stable with $\theta(\epsilon_{i}\pm\epsilon_{l})=\epsilon_{i}\mp\epsilon_{l}.$  
The $\theta$-stable subspace
$\fg_{\alpha}\oplus\fg_{\theta(\alpha)}$ decomposes as $\fg_{\alpha}\oplus\fg_{\theta(\alpha)}=((\fg_{\alpha}\oplus\fg_{\theta(\alpha)})\cap \fk)\oplus((\fg_{\alpha}\oplus\fg_{\theta(\alpha)})\cap\fg^{-\theta}).$
%Note that for each complex $\theta$-stable root $\alpha$,
\end{exam}

We make use of the following notation throughout the paper. 
\begin{nota}\label{nota:Weyl}

 Let $T$ be the maximal torus with Lie algebra $\ft$, and let $W$ be the Weyl group with respect to $T$. 
 For an element $w\in W$, let $\dot{w}\in N_{G}(T)$ be a representative of $w$.   
 If $\ft\subset\fb$, with $\fb\in\B$, then $\Ad(\dot{w})\fb$ is independent of the choice of representative $\dot{w}$ of $w$,
 and we denote it by $w(\fb)$.    
\end{nota}

Let $Q=K\cdot\fb$ and suppose that $\alpha\in\Phi_{\fb}^{+}$ is a simple root for $\fb$.  
  Let ${\Par}_\alpha$ be the variety of parabolic subalgebras
of type $\alpha$, and consider the projection $\pi_\alpha : \B \to {\Par}_\alpha.$  Let $m(s_\alpha)\cdot Q$ be the unique $K$-orbit
of maximal dimension in $\pi_\alpha^{-1}(\pi_\alpha(Q))$.   
For each simple root $\alpha$, choose root vectors $e_\alpha \in \fg_\alpha$,
$f_\alpha \in \fg_{-\alpha}$, and $h_\alpha=[e_\alpha, f_\alpha]$ such that $\mbox{span}\{e_{\alpha}, f_{\alpha}, h_{\alpha}\}$ forms a subalgebra 
of $\fg$ isomorphic to $\fsl(2,\C)$. 
Choose a Lie algebra homomorphism $\phi_\alpha:\fsl(2,\C) \to \fg$ such that:
\begin{equation}\label{eq:sl2map}
\phi_{\alpha}:\left[\begin{array}{cc} 0 & 1\\
0 & 0\end{array}\right]\to e_{\alpha},\;    
\phi_{\alpha}:\left[\begin{array}{cc} 0 & 0\\
1 & 0\end{array}\right]\to f_{\alpha},\;
\phi_{\alpha}:\left[\begin{array}{cc} 1 & 0\\
0 & -1\end{array}\right]\to h_{\alpha}\; 
\end{equation}
Also denote by $\phi_\alpha:SL(2,\C) \to G$  the induced Lie group homomorphism,
and let 
    \begin{equation}\label{eq:cayley}
u_{\alpha}=\phi_{\alpha}\left(\frac{1}{\sqrt{2}}\left[\begin{array}{cc} 1 & \imath\\
\imath & 1\end{array}\right]\right).
\end{equation}

\begin{lem}\label{lem_monoidaction} [\cite{RS}, 4.3]\\
Let $Q=K\cdot \fb$ be a $K$-orbit on $\B$.
\begin{enumerate}
\item  $m(s_\alpha)\cdot Q=Q$
unless $\alpha$ is either noncompact or complex $\theta$-stable, and when
$m(s_\alpha)\cdot Q \not=Q$, then $\dim(m(s_\alpha)\cdot Q)=\dim(Q)+1$.
\item  If $\alpha$ is noncompact for $Q$, then $m(s_\alpha)\cdot Q
=K\cdot \Ad(u_\alpha) \fb$ and the $K$-orbits in $\pi_\alpha^{-1}(\pi_\alpha(Q))$ are
$Q, m(s_\alpha)\cdot Q,$ and $K\cdot s_\alpha(\fb).$    
Further,
 $m(s_\alpha) \cdot K\cdot s_\alpha(\fb) = m(s_\alpha)\cdot Q.$    
\item If $\alpha$ is complex $\theta$-stable for $Q$, then $m(s_\alpha)\cdot \fb = K\cdot s_\alpha (\fb),$
and $\pi_\alpha^{-1}(\pi_\alpha(Q))$ consists of $Q$ and $m(s_\alpha)\cdot Q$.
\end{enumerate} 
\end{lem}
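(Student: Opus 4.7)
The plan is to reduce the lemma to an orbit analysis on the $\mathbb{P}^1$-fibre of $\pi_\alpha$. Let $P_\alpha\supset B$ be the minimal parabolic of type $\alpha$ with Levi decomposition $P_\alpha=L_\alpha\ltimes U_\alpha$. Since $\pi_\alpha:\B\to\Par_\alpha$ is a $G$-equivariant $\mathbb{P}^1$-bundle, the $K$-orbits in $\pi_\alpha^{-1}(\pi_\alpha(Q))$ correspond bijectively to $(K\cap P_\alpha)$-orbits on the fibre $P_\alpha/B\cong\mathbb{P}^1$ through $\fb$, and the maximal-dimension orbit is precisely the one that is open in the fibre. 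Because $U_\alpha$ acts trivially on $P_\alpha/B$, the problem reduces to analyzing how the image $J_\alpha$ of $K\cap P_\alpha$ in $L_\alpha$ acts on $\mathbb{P}^1$, an $SL(2,\C)$-level question controlled by how $\theta$ acts on $\phi_\alpha(\fsl(2,\C))$ and on the neighboring root spaces in $\fu_\alpha$.

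I would fix a $\theta$-stable Cartan $\ft\subset\fb$ and identify the fibre with $\mathbb{P}^1$ via $\phi_\alpha$, sending $\fb\mapsto[1:0]$ and $s_\alpha(\fb)\mapsto[0:1]$, then proceed by case analysis on the type of $\alpha$. For $\alpha$ compact imaginary, $\phi_\alpha(SL(2,\C))\subset K$, so $J_\alpha$ projects onto all of $\phi_\alpha(SL(2,\C))$ and acts transitively on $\mathbb{P}^1$, giving $m(s_\alpha)\cdot Q=Q$; the real and complex $\theta$-unstable cases are analogous, where a Cayley-type or Weyl-type element of $K$ sweeps $\fb$ and $s_\alpha(\fb)$ into a single open orbit, again leaving $Q$ unchanged. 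In the noncompact imaginary case, $\theta$ acts as $-\id$ on $\fg_{\pm\alpha}$ and as $+\id$ on $\ft\cap\phi_\alpha(\fsl(2,\C))$, and no elements of $K\cap U_\alpha$ contribute to the projection, so $J_\alpha$ acts through the diagonal $\C^\times$ of $\phi_\alpha(SL(2,\C))$; its three orbits $\{\fb\}$, $\{s_\alpha(\fb)\}$, and the open complement lift to three distinct $K$-orbits, and a direct computation with the Cayley element $u_\alpha$ of (\ref{eq:cayley}) places $\Ad(u_\alpha)\fb$ in the open orbit, giving $m(s_\alpha)\cdot Q=K\cdot\Ad(u_\alpha)\fb$ and $m(s_\alpha)\cdot K\cdot s_\alpha(\fb)=m(s_\alpha)\cdot Q$. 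In the complex $\theta$-stable case, a $\theta$-fixed combination $e_\alpha+c\,e_{\theta(\alpha)}\in\fk\cap\fp_\alpha$ projects to a nonzero element of $\fg_\alpha\subset\fl_\alpha$, so $J_\alpha$ acts through a Borel of $\phi_\alpha(SL(2,\C))$, whose two orbits on $\mathbb{P}^1$ are $\{\fb\}$ and the open complement (containing $s_\alpha(\fb)$), yielding $m(s_\alpha)\cdot Q=K\cdot s_\alpha(\fb)$.

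The main technical obstacle is the correct identification of $J_\alpha$ in each case: unlike $K\cap L_\alpha$, which is determined purely by the $\theta$-action on the Levi, $J_\alpha$ may pick up extra unipotent directions from elements of $\fk\cap\fp_\alpha$ whose $\fu_\alpha$-component cancels under the quotient to $\fl_\alpha$. The complex $\theta$-stable case is the most delicate, since here the extra direction arises from pairing $e_\alpha$ with $e_{\theta(\alpha)}\in\fu_\alpha$, and verifying this coupling requires a careful tracking of the $\theta$-action across distinct root spaces. The noncompact imaginary case turns on a direct $SL(2,\C)$ computation showing that $u_\alpha$ conjugates $\ft$ to a new $\theta$-stable Cartan on which $\alpha$ becomes real, so that $\Ad(u_\alpha)\fb$ is $K$-inequivalent to both $\fb$ and $s_\alpha(\fb)$; once these two linear-algebra checks are in place, the dimension assertion $\dim(m(s_\alpha)\cdot Q)=\dim(Q)+1$ follows from the observation that the open orbit on $\mathbb{P}^1$ has dimension one greater than any closed orbit.
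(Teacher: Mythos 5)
The paper itself offers no proof of this lemma: it is quoted directly from Richardson--Springer \cite{RS}, 4.3. So your proposal has to stand as a self-contained argument, and in outline it is exactly the standard one from that literature: reduce to the action of $K\cap P_\alpha$ on the fibre $P_\alpha/B\cong\mathbb{P}^1$ (the orbits in $\pi_\alpha^{-1}(\pi_\alpha(Q))$ being in bijection with the $(K\cap P_\alpha)$-orbits in the fibre, with dimensions differing by the fibre-orbit dimension), then case analysis on the $\theta$-type of $\alpha$. Your treatment of the complex $\theta$-stable case is right: comparing $\fg_{-\theta(\alpha)}$-components shows an element of $\fk\cap\fp_\alpha$ has no $\fg_{-\alpha}$-part, while $e_\alpha+\theta(e_\alpha)$ and a suitable element of $\ft^{\theta}$ (nonzero on $\alpha$ since $\alpha$ is complex) show the image is a full Borel of the $\fsl(2,\C)$, giving the two orbits $\{\fb\}$ and the open complement containing $s_\alpha(\fb)$. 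The Cayley computation $u_\alpha\cdot[1:0]=[1:i]$ correctly places $\Ad(u_\alpha)\fb$ in the open part in the noncompact case, and the dimension statement in (1) follows as you say.

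The one genuine inaccuracy is in the noncompact imaginary case. Your Lie-algebra computation does show that the image of $\fk\cap\fp_\alpha$ in $\fsl(2,\C)_\alpha$ is the torus, but this only controls the identity component of the image group $J_\alpha$: in general $K\cap P_\alpha$ may contain a representative of $s_\alpha$ (this is the type I/type II dichotomy for noncompact roots in \cite{RS}), in which case $[1:0]$ and $[0:1]$ lie in a single $(K\cap P_\alpha)$-orbit, $Q=K\cdot s_\alpha(\fb)$, and the fibre preimage carries only two $K$-orbits. So your claim that the three orbits are \emph{distinct} is false in general. The lemma as stated never asserts distinctness, so your argument still yields (2) once that word is removed (the listed orbits exhaust $\pi_\alpha^{-1}(\pi_\alpha(Q))$, possibly with $Q=K\cdot s_\alpha(\fb)$, and $m(s_\alpha)\cdot K\cdot s_\alpha(\fb)=m(s_\alpha)\cdot Q$ because $\pi_\alpha(s_\alpha(\fb))=\pi_\alpha(\fb)$); but as written the step is wrong, and if you want distinctness you need the type I hypothesis --- which does hold in the paper's application, e.g.\ the two closed orbits $Q_\pm$ for $\fso(2l+1,\C)$. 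A smaller point: the real and complex $\theta$-unstable cases are not ``analogous to compact'' via transitivity. In the real case $\alpha$ vanishes on $\ft^\theta$, and the image of $\fk\cap\fp_\alpha$ is the nonstandard torus through $e_\alpha+\theta(e_\alpha)$, whose open orbit happens to contain both $[1:0]$ and $[0:1]$; in the unstable case one gets the \emph{opposite} Borel, fixing $[0:1]$ with $[1:0]$ in its open orbit. Both give $m(s_\alpha)\cdot Q=Q$, but these verifications should be spelled out rather than waved at, since they are what makes part (1) true.
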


 The action by operators $m(s_\alpha)$ on $K$-orbits is called the monoidal
action \cite{RS}.  
\begin{lem}\label{l:allorbits}[\cite{RS}, Theorem 4.6]\\
Every $K$-orbit on $\B$ is of the form
$m(s_{\beta_1}) \cdots m(s_{\beta_k})\cdot \fb_1$, where $K\cdot\fb_1$ is a closed $K$-orbit
on $\B$, $k\ge 0$, and $\beta_1, \dots, \beta_k$ are simple roots. 
\end{lem}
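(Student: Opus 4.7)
The plan is to proceed by induction on $\dim Q$. The base case is immediate: if $Q$ is closed, take $k=0$ and any $\fb_1 \in Q$. For the inductive step, given a non-closed $K$-orbit $Q$, the goal is to exhibit a simple root $\alpha$ and a $K$-orbit $Q'$ with $\dim Q' < \dim Q$ satisfying $m(s_\alpha)\cdot Q' = Q$; applying the induction hypothesis to $Q'$ then writes it as a product of monoidal operators applied to a closed orbit, and prepending $m(s_\alpha)$ gives the desired expression for $Q$.

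The first step is to show that there exists some simple $\alpha$ with $\pi_\alpha^{-1}(\pi_\alpha(Q)) \supsetneq Q$. Otherwise $Q$ would be stable under every minimal parabolic subgroup $P_\alpha$, hence under the group $\langle P_\alpha : \alpha \in \Pi\rangle = G$, forcing $Q = \B$ and contradicting the presence of other $K$-orbits on the complete variety $\B$ (in particular, closed ones of strictly smaller dimension). For such an $\alpha$, Lemma \ref{lem_monoidaction} gives a complete description of $\pi_\alpha^{-1}(\pi_\alpha(Q))$ as a $\mathbb{P}^1$-bundle over $\pi_\alpha(Q)$ stratified into at most three $K$-orbits, with a unique maximal one $\tilde Q$ and $m(s_\alpha) \cdot Q'' = \tilde Q$ for every other $K$-orbit $Q''$ of smaller dimension in this preimage.

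The main obstacle is to arrange that $Q = \tilde Q$, so that a smaller orbit $Q'$ lying in $\pi_\alpha^{-1}(\pi_\alpha(Q))$ maps onto $Q$ under $m(s_\alpha)$; an arbitrary choice of $\alpha$ satisfying only $\pi_\alpha^{-1}(\pi_\alpha(Q)) \supsetneq Q$ need not have this property. To overcome this, I would choose a $K$-orbit $Q_0 \subset \overline{Q}\setminus Q$ of codimension one in $\overline{Q}$ and invoke the Richardson--Springer description of the closure order on $K$-orbits in $\B$ \cite{RS}: the covering relations of this order are precisely of the form $Q_0 \prec m(s_\alpha)\cdot Q_0$ for an appropriate simple $\alpha$. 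This produces the desired $\alpha$ with $Q = m(s_\alpha)\cdot Q_0$, and we take $Q' = Q_0$. The hardest part of the argument is establishing this description of covering relations in the closure order, which requires a careful case-by-case analysis of the type of $\alpha$ for $Q_0$ (compact or noncompact imaginary, real, complex $\theta$-stable or $\theta$-unstable) and a detailed study of how $\overline{Q}$ intersects the $\pi_\alpha$-fibers; this technical analysis is the heart of the Richardson--Springer theory.
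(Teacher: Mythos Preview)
The paper does not supply its own proof of this lemma; it merely cites \cite{RS}, Theorem 4.6. So there is nothing in the paper to compare your argument against.

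Your proposal is correct in outline, but it is more circuitous than necessary. The first step (producing a simple $\alpha$ with $\pi_\alpha^{-1}(\pi_\alpha(Q)) \supsetneq Q$) is, as you yourself note, a dead end: it exhibits $Q$ as a \emph{non}-maximal orbit in a $\PR^1$-bundle, which is the wrong direction. You then recover by invoking the Richardson--Springer description of covering relations in the Bruhat order on $K\backslash\B$. That works, but it is using a harder theorem from \cite{RS} to deduce an easier one, and you acknowledge that the covering-relation analysis is itself the technical heart of the matter.

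A more direct argument, using only Lemma \ref{lem_monoidaction} and the standard characterization of closed orbits, runs as follows. Choose $\fb\in Q$ containing a $\theta$-stable Cartan $\ft$. If $Q$ is not closed, then $\fb$ is not $\theta$-stable, so $\theta$ does not preserve the positive system $\Phi_\fb^+$. Hence some simple root $\alpha$ satisfies $\theta(\alpha)\in -\Phi_\fb^+$, i.e., $\alpha$ is real or complex $\theta$-unstable for $Q$. In either case Lemma \ref{lem_monoidaction} (read in reverse) produces a $K$-orbit $Q'$ in $\pi_\alpha^{-1}(\pi_\alpha(Q))$ with $\dim Q'=\dim Q-1$ and $m(s_\alpha)\cdot Q'=Q$: if $\alpha$ is complex $\theta$-unstable, take $Q'=K\cdot s_\alpha(\fb)$ (for which $\alpha$ is complex $\theta$-stable); if $\alpha$ is real, $Q'$ is one of the orbits for which $\alpha$ is noncompact imaginary. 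Induction then finishes the job without appealing to the closure-order machinery.
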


We now briefly recall the classification of closed $K$-orbits 
on $\B$ from Section 4.3 of \cite{CEexp}.  Let $K\cdot \fb_0$
be a closed $K$-orbit with $\fb_0$ containing a $\theta$-stable Cartan subalgebra $\ft$
corresponding to a maximal torus $T$.  Since $T$ is $\theta$-stable,
 $\theta$ acts naturally on the Weyl group $W=N_G(T)/T$.  Further, the
subgroup $T\cap K$ is a Cartan subgroup of $K$  and the
Weyl group $W_K = N_K(T\cap K)/(T \cap K)$ embeds into $W$ (Lemmas 5.1 and 5.3, \cite{Rich}), and
is contained in $W^\theta,$ the fixed points of $\theta$ on $W$.

\begin{lem}\label{lem_orbitclass} [\cite{CEexp}, Theorem 4.10]
The map $W^\theta/W_K \to K\backslash \B$
given by $wW_K \mapsto K\cdot w^{-1}\cdot (\fb_0)$ is a bijection to the closed $K$-orbits
on $\B$.   
%\end{enumerate}
\end{lem}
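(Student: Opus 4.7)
The plan is to check four properties: (i) for $w\in W^\theta$ the Borel $\fb_w := \Ad(\dot w^{-1})\fb_0$ is $\theta$-stable, so $K\cdot\fb_w$ is closed; (ii) the $K$-orbit $K\cdot\fb_w$ depends only on $wW_K$; (iii) every closed $K$-orbit arises this way; and (iv) distinct cosets give distinct $K$-orbits. Items (i) and (ii) are essentially formal. Because $K\cdot\fb_0$ is closed, the standard Matsuki criterion (\cite{RS}, \cite{Mat79}) gives $\theta(\fb_0) = \fb_0$; since $\theta(w)=w$ and $\ft$ is $\theta$-stable, one computes $\theta(\fb_w)=\fb_w$, so $K\cdot\fb_w$ is also closed. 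If $v\in W_K$ we may choose a representative $\dot v\in N_K(T\cap K)\subset K$, so replacing $w$ by $wv$ moves $\fb_w$ within $K\cdot\fb_w$; hence the class $K\cdot\fb_w$ depends only on $wW_K$.

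For surjectivity, let $Q=K\cdot\fb$ be a closed $K$-orbit, so $\fb$ is $\theta$-stable and contains some $\theta$-stable Cartan $\ft'$. The key geometric input is that in the pair $(\fso(n,\C),\fso(n-1,\C))$ the Cartan $\ft$ is a fundamental (maximally compact) Cartan of $\fg$, as one reads off from the explicit description of $\theta$ in Section~\ref{ss:symmetricreal}, and every $\theta$-stable Borel of $\fg$ contains a fundamental Cartan; all fundamental Cartans are $K$-conjugate, so $\ft'$ is $K$-conjugate to $\ft$ and, after replacing $\fb$ by a $K$-conjugate, we may assume $\ft\subset\fb$. Then $\fb = \Ad(\dot w^{-1})\fb_0$ for a unique $w\in W$. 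Applying $\theta$ to both sides and using $\theta(\fb)=\fb$ and $\theta(\fb_0)=\fb_0$ yields $\dot w\,\theta(\dot w)^{-1}\in B_0\cap N_G(T)=T$, hence $\theta(w)=w$ in $W$.

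For injectivity, suppose $K\cdot\fb_{w_1}=K\cdot\fb_{w_2}$ and pick $k\in K$ with $\Ad(k)\fb_{w_2}=\fb_{w_1}$. Then $\ft$ and $\Ad(k)\ft$ are two $\theta$-stable Cartans of the $\theta$-stable Borel $\fb_{w_1}$, and they are therefore conjugate by an element $n$ of the intersection of the unipotent radical of $\fb_{w_1}$ with $K$, a standard Matsuki-type fact. Then $nk\in N_G(T)\cap K$ represents some $v\in W_K$ with $\Ad(\dot v)\fb_{w_2}=\fb_{w_1}$, forcing $w_1W_K=w_2W_K$. The main obstacle is the $K$-conjugacy statement used in surjectivity: verifying from Section~\ref{ss:symmetricreal} that $\ft$ is a fundamental Cartan of $\fg$, so that the map captures \emph{every} closed $K$-orbit, is the essential substantive step, as the other pieces reduce to standard Matsuki--Richardson--Springer machinery.
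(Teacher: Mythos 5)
The paper itself offers no proof of this lemma: it is quoted verbatim from \cite{CEexp}, Theorem 4.10, with only the surrounding setup (the $\theta$-stable Cartan $\ft\subset\fb_0$, and the embedding $W_K\subset W^\theta\subset W$ via Lemmas 5.1 and 5.3 of \cite{Rich}) recalled. So there is no internal argument to compare against; what you have written is a reconstruction of the standard proof, and in outline it is correct. I would only press you on the three places where you wave at ``standard Matsuki facts,'' since they carry the weight of the argument. First, the standard closedness criterion is that $K\cdot\fb$ is closed iff $\fb\cap\fk$ is a Borel subalgebra of $\fk$ (the paper's Remark \ref{r:isBorel}); the implication ``closed $\Rightarrow$ $\theta(\fb)=\fb$'' that you use for surjectivity is true but is the \emph{converse} of what the paper cites (Proposition 4.12 of \cite{CEexp}), and needs either a precise reference or a short argument: the criterion forces $\ft\cap\fk$ to be a Cartan of $\fk$, so $\ft$ is fundamental and has no real roots, and then counting the contributions of compact imaginary roots and complex $\theta$-pairs to $\dim(\fb\cap\fk)$ shows that $\dim(\fb\cap\fk)=\dim(\fb_{\fk})$ can only happen when every complex root of $\fb$ is $\theta$-stable, i.e.\ $\theta(\fb)=\fb$. (This also shows your verification that $\ft$ is fundamental for the two orthogonal pairs, while correct, is not really the essential step: it is automatic for any closed-orbit representative.) Second, the ``Matsuki-type fact'' in your injectivity step has a one-line proof worth recording: the transporter in the unipotent radical $N$ of $B_{w_1}$ taking $\Ad(k)\ft$ to $\ft$ is a single element $n$ (since $N\cap N_G(T)=\{e\}$), and applying $\theta$ shows $\theta(n)=n$, so $n\in N^{\theta}\subset K$. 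Third, to conclude that $nk$ defines an element of $W_K$ (and not merely an element of $W^\theta$) you need $N_K(T)=N_K(T\cap K)$, which again uses that $\ft$ is fundamental, so that no root vanishes on $\ft\cap\fk$ and hence $Z_G(T\cap K)^0=T$; this is exactly where the cited results of \cite{Rich} enter. With those justifications supplied, your argument is complete and is, as far as one can tell, the same route taken in \cite{CEexp}.
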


\begin{rem}\label{r:isBorel}
If the 
$K$-orbit $Q=K\cdot\fb$ is closed, then $\fb\cap\fk$ is a Borel subalgebra in $\fk$ 
(see Lemma 5.1 of \cite{Rich}).  Thus, $\dim (Q)=\dim(\B_{\fk})$.
\end{rem}

We now return to the case where $\fg=\fso(n,\C)$ and $\fk=\fso(n-1,\C)$ and use 
Lemma \ref{lem_orbitclass} to determine the closed $K$-orbits on $\B$.  Before doing that, we state
a result on the relation between $W$ and $W_K$, which we will also need
later.    Recall that when $\fg=\fso(2l+1,\C)$, $W=S_l \rtimes U_l$, where
$U_l$ is the group generated by sign changes $\tau_i, 1\le i \le l$ in the root system, with
$\tau_i(\eps_i)=-\eps_i$ and $\tau_i(\eps_j)=\eps_j$ for $j\not= i.$   When
$\fg=\fso(2l,\C)$, $W=S_l \rtimes T_l$, where $T_l$ is the subgroup of
$U_l$ generated by products $\tau_i \tau_j, 1\le i < j \le l$.

\begin{prop}\label{prop_weylk}
\begin{enumerate}
\item Let $\fg=\fso(2l+1,\C)$ and let $\fk=\fso(2l,\C)$.
Then $W=W^{\theta}$ and $W/W_K = \{ eW_K, s_{\alpha_l}W_K \},$ where 
$e$ denotes the identity element in $W$.  In particular, $W_K$ has index $2$ in $W$.  Further, $W_K$ is the subgroup $S_l \rtimes T_l$ of $W$.
\item Let $\fg=\fso(2l,\C)$ and $\fk=\fso(2l-1,\C)$.    Then $W^\theta$ is
the subgroup of $W$ generated by the elements $s_{\alpha_1}, \dots, s_{\alpha_{l-2}},
s_{\alpha_{l-1}}\cdot s_{\alpha_l},$ and $W^\theta=W_K.$
\end{enumerate}
\end{prop}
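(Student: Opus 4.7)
The plan is to treat the two cases separately, exploiting the very different origins of $\theta$: an inner automorphism in case (1), a diagram (outer) automorphism in case (2).

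For part (1), I would first observe that since $\theta=\Ad(t)$ with $t$ in the Cartan subgroup $H$ with Lie algebra $\fh$, the induced action on $W=N_G(H)/H$ is trivial: for $n\in N_G(H)$, the element $n^{-1}tn$ lies in $H$, so $tnt^{-1}\in nH$. Hence $W^\theta=W$. Next, since $\fh\subset\fk$ forces $H\subset K$, we have $T\cap K=H$ and $W_K=N_K(H)/H$ is the Weyl group of $\fk=\fso(2l,\C)$ of type $D_l$, which sits inside $W=S_l\rtimes U_l$ precisely as $S_l\rtimes T_l$, of index $2$. Finally, $s_{\alpha_l}$ equals the single sign change $\tau_l$, which lies in $U_l\setminus T_l$, so $\{eW_K,s_{\alpha_l}W_K\}$ exhausts $W/W_K$.

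For part (2), I would first check that the listed elements lie in $W^\theta$: the reflections $s_{\alpha_i}$ for $i\leq l-2$ are fixed because $\theta$ fixes the corresponding simple roots, while $s_{\alpha_{l-1}}$ and $s_{\alpha_l}$ are swapped by $\theta$ and commute (since $\langle\alpha_{l-1},\alpha_l\rangle=0$), so their product is $\theta$-fixed. A direct calculation using $s_{\alpha_l}(\eps_{l-1})=-\eps_l$, $s_{\alpha_l}(\eps_l)=-\eps_{l-1}$ gives $s_{\alpha_{l-1}}s_{\alpha_l}=\tau_{l-1}\tau_l$. The reflections $s_{\alpha_1},\dots,s_{\alpha_{l-2}}$ generate the symmetric group $S_{l-1}$ permuting $\eps_1,\dots,\eps_{l-1}$; conjugating $\tau_{l-1}\tau_l$ by elements of $S_{l-1}$ produces each $\tau_i\tau_l$ for $i<l$, and products $\tau_i\tau_l\cdot\tau_j\tau_l=\tau_i\tau_j$ then sweep out $T_l$. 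Thus the subgroup generated by the listed elements is $S_{l-1}\rtimes T_l$, of order $(l-1)!\,2^{l-1}$.

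To close the argument, I would compute $|W^\theta|$ directly by viewing $W$ as the group of signed permutations of $\{\pm\eps_1,\dots,\pm\eps_l\}$ with an even number of sign changes, so that $\theta$ acts as the transposition $(\eps_l,-\eps_l)$. A standard centralizer computation for transpositions then yields $W^\theta=\{w\in W:w(\eps_l)=\pm\eps_l\}$, and counting such $w$ (noting that the $D_l$ parity constraint forces the sign of $w(\eps_l)$ once the signed permutation of $\eps_1,\dots,\eps_{l-1}$ is chosen) gives $|W^\theta|=(l-1)!\,2^{l-1}$, matching the subgroup from the previous paragraph and forcing equality. The containment $W_K\subseteq W^\theta$ is general (see Lemmas 5.1 and 5.3 of \cite{Rich}, as cited before Lemma \ref{lem_orbitclass}), and since $W_K$ is the Weyl group of $\fk=\fso(2l-1,\C)$ of type $B_{l-1}$, also of order $(l-1)!\,2^{l-1}$, the conclusion $W^\theta=W_K$ follows by order comparison. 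The main subtlety is in part (2): the subgroup $S_{l-1}\rtimes T_l$ that appears is not the obvious embedding of the type-$B_{l-1}$ Weyl group into $W$ (which would use $U_{l-1}$ in place of $T_l$); reconciling the two descriptions hinges on the $D_l$ parity condition identifying $T_l$ with $\{\pm 1\}^{l-1}$ via projection onto the first $l-1$ coordinates.
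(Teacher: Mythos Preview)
Your proof is correct and close in spirit to the paper's. Part (1) is essentially identical. For part (2), the paper takes a slightly different path to the same destination: it cites Steinberg \cite{St}, 1.32(b), for the statement that $W^\theta$ is generated by $s_{\alpha_1},\dots,s_{\alpha_{l-2}},s_{\alpha_{l-1}}s_{\alpha_l}$, and then identifies $W^\theta$ directly via the conjugation formula $\theta\sigma\theta^{-1}=\sigma\cdot\tau_{\sigma^{-1}(l)}\cdot\tau_l$ for $\sigma\in S_l$ (together with the observation that $\theta$ acts trivially on $T_l$), immediately yielding $W^\theta=S_{l-1}\rtimes T_l$. You instead prove the generator statement by hand and reach the same identification through an order count, interpreting $\theta$ as conjugation by $\tau_l$ and describing its centralizer in $W$ as $\{w:w(\eps_l)=\pm\eps_l\}$. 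Both arguments then finish the same way, invoking $W_K\subset W^\theta$ and matching orders. Your route is a bit more self-contained (no appeal to Steinberg); the paper's conjugation formula is a bit quicker once one accepts the citation.
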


\begin{proof}
For (1), since $\theta$ is inner, we know $W^{\theta}=W$,
 and the Cartan subalgebra $\fh$ of $\fg$ is also a Cartan subalgebra of $\fk$. 
By Example \ref{ex:roottypes}, the roots $\eps_{i}\pm \eps_{j}$ are exactly the roots of $\fk$ 
with respect to $\fh$.  The assertion about $W/W_{K}$ now follows by the above 
remarks on Weyl groups, and the observation that $s_{\alpha_{l}}=\tau_{l}\not\in W_K$. The rest of (1) follows easily.

For (2), the first statement follows from 1.32(b) in \cite{St}.   The second
statement can be deduced from the proof of (5) in Theorem 8.2 of \cite{St},
but we provide a more direct proof.
An easy calculation shows that for
$\sigma \in S_l$, $\theta \sigma \theta^{-1} = \sigma \cdot \tau_{\sigma^{-1}(l)} \cdot \tau_l.$ 
Note that $T_l$ is commutative and $\theta$ acts trivially on $T_l$.   It follows
that $W^\theta$ is identified with the semi-direct product of $S_{l-1}$ with
$T_l$.  Since $W_K \subset W^\theta$, the second statement follows.
\end{proof}  

Now it remains to describe the monoidal action.  
To determine the type of a root for a $K$-orbit $Q=K\cdot\fb$, 
it is convenient to replace the involution $\theta$ by another involution $\theta_{Q}$, which 
preserves the standard Cartan subalgebra of diagonal matrices $\fh$ and thus 
acts on the standard root system $\Phi(\fg,\fh)$.   Suppose that $\fb=\Ad(v)\fb_{+}$, 
where $\fb_{+}$ is the standard Borel subalgebra of upper triangular matrices.  Then 
$$
\theta_{Q}:=\Ad(v^{-1})\circ\theta\circ \Ad(v).
$$
It is easy to check  that the type 
of a standard positive root $\alpha\in \Phi^{+}(\fg,\fh)$ with respect to $\theta_{Q}$ 
is the same as the type of the positive root $\Ad(v)\alpha:=\alpha\circ\Ad(v^{-1})$ for $\fb$ with respect to $\theta$ 
(see Definition 4.6 and Proposition 4.7 of \cite{CEexp}).  In Section 4.4 of \cite{CEexp}, we give an 
inductive method of constructing the involution $\theta_{m(s_{\alpha})\cdot Q}$ from the involution $\theta_{Q}$ (Propositions 4.27 and 4.28).
 
%an inductive method of constructing the involution $\theta_{Q}$ starting with the closed $K$-orbits on $\B$. 

\begin{prop} \label{prop_sooddflag}
Let $\fg=\fso(2l+1,\C)$ and $\fk=\fso(2l,\C)$.
\begin{enumerate}  
\item There are exactly  $l+2$ $K$-orbits on the flag variety $\B$ of $\fg$.
\item We let $\fb_+$ be the upper triangular matrices in $\fg$,
and let $\fb_-:=s_{\alpha_l}(\fb_+)$.    Exactly two $K$-orbits on $\B$ are
closed, and they are $Q_+:= K\cdot \fb_+$ and $Q_-:=K\cdot \fb_-.$
Further, $m(s_{\alpha_l})\cdot Q_+ = m(s_{\alpha_l})\cdot Q_-=K\cdot \Ad(u_{\alpha_l})\fb_+.$
\item The non-closed orbits are of the form $$Q_i:=m(s_{\alpha_{i+1}})\cdot m(s_{\alpha_{i+2}})
\cdots m(s_{\alpha_{l-1}}) \cdot m(s_{\alpha_l})\cdot Q_+$$ for $i=0, \dots, l-1$.
Moreover, the codimension of $Q_i$ in $\B$ is $i$.    Further, 
$$\fb_i:= \Ad(u_{\alpha_{l}} )s_{\alpha_{l-1}}  s_{\alpha_{l-2}} \dots s_{\alpha_{i+1}}  (\fb_+) \in Q_i.$$
In particular, the unique open $K$-orbit contains the Borel 
subalgebra 
\begin{equation}\label{eq:openborelodd}
\fb_{0}=\Ad(u_{\alpha_{l}})s_{\alpha_{l-1}}  s_{\alpha_{l-2}} \dots s_{\alpha_{1}}  (\fb_+).
\end{equation}
\end{enumerate}
\end{prop}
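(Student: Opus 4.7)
The plan is to combine the classification of closed $K$-orbits via Lemma \ref{lem_orbitclass} with a downward induction using the monoidal action on simple roots, then invoke Lemma \ref{l:allorbits} to conclude no further orbits appear.

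\emph{Closed orbits.} By Proposition \ref{prop_weylk}(1), $W = W^\theta$ and $W/W_K = \{eW_K,\, s_{\alpha_l} W_K\}$. Using the realization in Section \ref{ss:symmetricreal}, one checks that $\fb_+ \cap \fk$ is the upper triangular Borel of $\fk = \fso(2l,\C)$, so $Q_+ = K \cdot \fb_+$ is closed by Remark \ref{r:isBorel}. Applying Lemma \ref{lem_orbitclass} with base orbit $Q_+$ yields exactly two closed $K$-orbits, namely $Q_+$ and $Q_- = K \cdot s_{\alpha_l}(\fb_+) = K \cdot \fb_-$. Remark \ref{r:isBorel} also gives $\dim Q_\pm = \dim \B_\fk$, so their codimension in $\B$ is $l^2 - l(l-1) = l$.

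\emph{Non-closed orbits via monoidal action.} By Example \ref{ex:roottypes}, for $\theta_{Q_+}=\theta$ the simple roots $\alpha_1,\dots,\alpha_{l-1}$ are compact imaginary and $\alpha_l$ is noncompact imaginary. Lemma \ref{lem_monoidaction}(1) shows that only $m(s_{\alpha_l})$ enlarges $Q_+$, and Lemma \ref{lem_monoidaction}(2) gives $m(s_{\alpha_l})\cdot Q_+ = K\cdot \Ad(u_{\alpha_l})(\fb_+) = m(s_{\alpha_l})\cdot Q_-$, an orbit of codimension $l-1$ containing $\fb_{l-1}$. I would then do a downward induction on $i$: assuming $Q_i$ has codimension $i$ and contains $\fb_i = \Ad(u_{\alpha_l})\, s_{\alpha_{l-1}}\cdots s_{\alpha_{i+1}}(\fb_+)$, I would use Propositions 4.27 and 4.28 of \cite{CEexp} to evolve $\theta_{Q_i}$ and show that $\alpha_i$ is complex $\theta$-stable for $\theta_{Q_i}$ while the other simple roots $\alpha_j$ are compact imaginary or complex $\theta$-unstable. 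Lemma \ref{lem_monoidaction}(3) then produces the orbit $Q_{i-1}=m(s_{\alpha_i})\cdot Q_i$, with representative $\fb_{i-1}$, and Lemma \ref{lem_monoidaction}(1) rules out any other enlargement of $Q_i$.

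\emph{Completeness and count.} By Lemma \ref{l:allorbits}, every $K$-orbit is obtained by iterated monoidal operators starting from a closed orbit. Since the chain of permitted nontrivial monoidal moves from $Q_+$ or $Q_-$ is forced by the root-type analysis above, the full list of $K$-orbits is $\{Q_+,Q_-\}\cup\{Q_0,Q_1,\dots,Q_{l-1}\}$. The $Q_i$ have pairwise distinct codimensions, and $Q_+\neq Q_-$ since they correspond to distinct cosets in $W/W_K$, giving exactly $l+2$ orbits. In particular $Q_0$ has codimension $0$, so it is the unique open orbit, with representative $\fb_0 = \Ad(u_{\alpha_l})\, s_{\alpha_{l-1}}\cdots s_{\alpha_1}(\fb_+)$.

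The main obstacle I anticipate is the root-type bookkeeping in the inductive step: at each stage one must compute how $\theta_{Q_i}$ acts on the simple roots and verify both that $\alpha_i$ has the correct type (noncompact imaginary or complex $\theta$-stable) to give a monoidal increment, and that no other simple root of $\theta_{Q_i}$ opens a parallel branch of new orbits. This is exactly where the explicit transformation rules for involutions under the monoidal action developed in Section 4.4 of \cite{CEexp} are essential.
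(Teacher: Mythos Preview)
Your approach is essentially identical to the paper's: classify the closed orbits via Lemma \ref{lem_orbitclass} and Proposition \ref{prop_weylk}(1), then inductively track the involutions $\theta_{Q_i}$ using Propositions 4.27--4.28 of \cite{CEexp}, apply Lemma \ref{lem_monoidaction} at each step to produce $Q_{i-1}$ and rule out other branches, and close with Lemma \ref{l:allorbits}. Two small corrections: Remark \ref{r:isBorel} only gives the implication ``closed $\Rightarrow$ $\fb\cap\fk$ is Borel,'' so to conclude $Q_+$ is closed the paper instead observes that $\fb_+$ is $\theta$-stable and cites Proposition 4.12 of \cite{CEexp}; and at the first non-closed stage $\theta_{Q_{l-1}}$ makes $\alpha_l$ \emph{real} rather than compact imaginary or complex $\theta$-unstable, though this still yields $m(s_{\alpha_l})\cdot Q_{l-1}=Q_{l-1}$ as you need.
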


\begin{proof}
 For (2),  since $\fb_+$
is $\theta$-stable, $K\cdot \fb_+$ is closed by Proposition 4.12 of \cite{CEexp}.
   By Lemma \ref{lem_orbitclass} and part (1) of
Proposition \ref{prop_weylk}, 
there are two closed orbits, and they are
$\fb_+$ and $\fb_-$.   The assertion that $\fb_{l-1}=\Ad(u_{\alpha_l})\fb_+ \in
Q_{l-1}$ follows from part (2) of Lemma 
\ref{lem_monoidaction} and Example \ref{ex:roottypes}.  The second statement of part (2) of Lemma \ref{lem_monoidaction} implies
that $m(s_{\alpha_l})\cdot Q_{-}=m(s_{\alpha_l})\cdot Q_{+}=Q_{l-1}$.  
By Remark \ref{r:isBorel}, $\codim(Q_{+})=\codim(Q_{-})=\dim (\B)-\dim (\B_{\fk})=l$.  
Thus, $\codim(Q_{l-1})=l-1$ by part (1) of Lemma \ref{lem_monoidaction}.  

For (3), first recall that by Proposition 4.27 of \cite{CEexp}, the involution
$\theta_{Q_{l-1}}$ associated to $Q_{l-1}=K\cdot\fb_{l-1}$ is $\Ad(\dot{s}_{\alpha_l}^{-1}) \circ \theta$, so the
involution on the standard root system is $s_{\alpha_l}$, which is a sign change in
the last variable.   Since $\theta=\Ad(t)$, it follows from a calculation in $SO(3,\C)$
 that we can choose
the representative ${\dot{s}}_{\alpha_l}$ so that $\theta_{Q_{l-1}} \in GL(\C e_l + \C e_{-l})$.
%and with the property that ${\dot{s}}_{\alpha_l} u_{\alpha}=u_{\alpha} \dot{s_{\alpha}}$ (see Equation 4.2 in \cite{CEexp}).  
It follows easily that $\alpha_1, \dots, \alpha_{l-2}$ are compact for $\theta_{Q_{l-1}}$,
while $\alpha_{l-1}$ is complex $\theta_{Q_{l-1}}$-stable, and $\alpha_l$ is real.  Hence, the only monoidal action which gives us a new orbit is 
$m(s_{\alpha_{l-1}})\cdot Q_{l-1} = Q_{l-2}$.    By part (3) of Lemma \ref{lem_monoidaction}, the
Borel subalgebra $\fb_{l-2} = \Ad(\tilde{\dot{s}}_{\alpha_{l-1}})(\fb_{l-1}),$ where
$\tilde{\dot{s}}_{\alpha_{l-1}}$ is a representative of the simple reflection
$s_{\alpha_{l-1}}$ defined with respect to $\fb_{l-1}$.   Since
$\fb_{l-1}=\Ad(u_{\alpha_l})\fb_+$, it follows that $\tilde{\dot{s}}_{\alpha_{l-1}}
=u_{\alpha_l} \dot{s}_{\alpha_{l-1}} u_{\alpha_l}^{-1}$.  Hence,
$\fb_{l-2} = \Ad(u_{\alpha_l} \dot{s}_{\alpha_l} u_{\alpha_l}^{-1} u_{\alpha_l})(\fb_+),$
which verifies the last part of (3) for $i=l-2$.
By Proposition 4.28 of \cite{CEexp}, the involution $\theta_{Q_{l-2}}$ associated
to $Q_{l-2}$ is 
$\Ad({\dot{s}}_{\alpha_{l-1}})^{-1} \circ \theta_{Q_{l-1}} \circ \Ad({\dot{s}}_{\alpha_{l-1}}).$
We can choose ${\dot{s}}_{\alpha_{l-1}}$, so that 
$\theta_{Q_{l-2}} \in GL(\C e_{l-1} + \C e_{-(l-1)}).$   Thus, $\alpha_1, \dots, \alpha_{l-3},$
and $\alpha_l$ are compact, while $\alpha_{l-2}$ is complex $\theta_{Q_{l-2}}$-stable
and $\alpha_{l-1}$ is complex $\theta_{Q_{l-2}}$-unstable.
Now an inductive argument, which we leave to the reader, shows that if we 
define $\fb_i$ as in assertion (3), and let $\theta_{Q_{i}}$ be the involution relative to $Q_{i}$,
 the roots $\alpha_1, \dots, \alpha_{i-1}$ are compact
for $\theta_{Q_{i}}$, $\alpha_{i}$ is $\theta_{Q_{i}}$-stable, $\alpha_{i+1}$ is $\theta_{Q_{i}}$-unstable,
and $\alpha_{i+2}, \dots, \alpha_l$ are compact for $\theta_{Q_{i}}$.  Hence, from $Q_i$,
the only monoidal action which gives a new orbit is $m(s_{\alpha_{i}})\cdot Q_i = Q_{i-1}$
and $Q_{i-1}=K\cdot \fb_{i-1},$ by using the same argument as in the case
$i=l-2$.  As a consequence, the codimension of $Q_{i-1}$ in $\B$ is $i-1$.
It now follows that $Q_0, \dots, Q_{l-1}$ are distinct orbits.   The induction
argument implies that no monoidal actions change $Q_0$, and it follows by
Lemma \ref{l:allorbits} that $Q_+, Q_-, Q_{l-1}, \dots, Q_0$ are all the
$K$-orbits.
   This completes the proof of (3), and (1) is an easy consequence.
\end{proof}

\begin{prop}\label{prop_soevenflag} 
Let $\fg=\fso(2l,\C)$ and $\fk=\fso(2l-1,\C)$.
\begin{enumerate}
\item There are exactly $l$ $K$-orbits in the flag variety $\B$ of $\fg$.
\item Let $\fb_+$ be the set of upper triangular matrices in $\fg$.
Then $Q_+:= K\cdot \fb_+$ is the only closed $K$-orbit.
\item Let $$Q_i:=m(s_{\alpha_i}) \dots m(s_{\alpha_{l-1}}) \cdot Q_+$$
and let $$\fb_i:= s_{\alpha_{l-1}} s_{\alpha_{l-2}} \dots  s_{\alpha_{i}}(\fb_+)\mbox{ for } i=1,\dots, l-1$$
Then $Q_i=K\cdot \fb_i$ has codimension $i-1$ in $\B$.   The distinct $K$-orbits are 
$Q_+, Q_{l-1}, \dots, Q_1$.
 In particular, the unique open orbit is $Q_{1}$ and contains the Borel subalgebra 
\begin{equation}\label{eq:openboreleven}
\fb_{1}=s_{\alpha_{l-1}} s_{\alpha_{l-2}} \dots  s_{\alpha_{1}}(\fb_+).
\end{equation}
\end{enumerate}
\end{prop}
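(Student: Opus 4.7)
The plan is to mirror the proof of Proposition \ref{prop_sooddflag}, replacing Proposition \ref{prop_weylk}(1) by Proposition \ref{prop_weylk}(2) and using the root-type data from Example \ref{ex:roottypes} for the $D_l$ case.

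First, I verify that $Q_+$ is the unique closed $K$-orbit. The standard Cartan $\fh$ is $\theta$-stable, and the set of standard positive root spaces is permuted among itself by $\theta$, so $\fb_+$ is $\theta$-stable and $Q_+$ is closed by Proposition 4.12 of \cite{CEexp}. By Lemma \ref{lem_orbitclass}, the closed $K$-orbits are parametrized by $W^\theta/W_K$, which is a singleton by Proposition \ref{prop_weylk}(2). Hence $Q_+$ is the only closed orbit, and by Remark \ref{r:isBorel}, $\codim Q_+ = \dim \B - \dim \B_\fk = l(l-1) - (l-1)^2 = l-1$.

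Next, I build the non-closed orbits inductively via monoidal actions. For $\theta_{Q_+} = \theta$, Example \ref{ex:roottypes} identifies $\alpha_1, \dots, \alpha_{l-2}$ as compact imaginary and $\alpha_{l-1}, \alpha_l$ as complex $\theta$-stable, so by Lemma \ref{lem_monoidaction}(3) only $m(s_{\alpha_{l-1}})$ and $m(s_{\alpha_l})$ produce new orbits from $Q_+$, namely $K \cdot s_{\alpha_{l-1}}(\fb_+)$ and $K \cdot s_{\alpha_l}(\fb_+)$. These coincide: $\alpha_{l-1}$ and $\alpha_l$ are orthogonal, so $s_{\alpha_{l-1}} s_{\alpha_l} = s_{\alpha_l} s_{\alpha_{l-1}} \in W_K$ by Proposition \ref{prop_weylk}(2), and lifting to $\dot{w} \in N_K(T \cap K) \subset K$ gives $\Ad(\dot{w}) s_{\alpha_{l-1}}(\fb_+) = s_{\alpha_l}(\fb_+)$. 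Hence the common orbit is $Q_{l-1} = K \cdot \fb_{l-1}$, of codimension $l-2$ by Lemma \ref{lem_monoidaction}(1). Proposition 4.28 of \cite{CEexp} then gives $\theta_{Q_{l-1}} = \Ad(\dot{s}_{\alpha_{l-1}})^{-1} \theta \, \Ad(\dot{s}_{\alpha_{l-1}})$, and a direct calculation (pushing $\theta(\epsilon_l) = -\epsilon_l$ through $s_{\alpha_{l-1}}$) shows $\alpha_{l-2}$ is the unique complex $\theta_{Q_{l-1}}$-stable simple root, while $\alpha_{l-1}, \alpha_l$ are complex $\theta_{Q_{l-1}}$-unstable and $\alpha_1, \dots, \alpha_{l-3}$ remain compact imaginary.

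An induction on $i$, running the same computation stepwise, shows that at each stage $Q_i$ with $i \geq 2$, $\alpha_{i-1}$ is the unique simple root producing a new orbit via monoidal action, yielding $Q_{i-1} = m(s_{\alpha_{i-1}}) \cdot Q_i = K \cdot \fb_{i-1}$ with codimension $i-2$; at $Q_1$, no simple root produces a new orbit, so $Q_1$ is open. By Lemma \ref{l:allorbits}, every $K$-orbit is reachable by monoidal actions from a closed orbit; since $Q_+$ is the unique closed orbit and the sequence above exhausts all reachable orbits, $\{Q_+, Q_{l-1}, \dots, Q_1\}$ is the complete list of $l$ orbits. The principal obstacle is the inductive root-type bookkeeping for the successive involutions $\theta_{Q_i}$, which requires care because $\theta_{Q_+}$ already acts nontrivially on the simple roots by interchanging $\alpha_{l-1}$ and $\alpha_l$.
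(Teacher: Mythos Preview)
Your proof is correct and follows essentially the same approach as the paper: both establish (2) via Proposition \ref{prop_weylk}(2) and Lemma \ref{lem_orbitclass}, then build the chain of orbits inductively using the root-type analysis from Example \ref{ex:roottypes} and Proposition 4.28 of \cite{CEexp}. The only minor difference is in showing $K\cdot s_{\alpha_{l-1}}(\fb_+)=K\cdot s_{\alpha_l}(\fb_+)$: the paper directly verifies that a chosen representative $\dot{s}_{\alpha_l}\dot{s}_{\alpha_{l-1}}$ is $\theta$-fixed, while you invoke $s_{\alpha_{l-1}}s_{\alpha_l}\in W_K$ from Proposition \ref{prop_weylk}(2) and lift---note that for the lift to act correctly on Borels containing $\fh$ you want $\dot{w}\in N_K(T)$ rather than merely $N_K(T\cap K)$, which is exactly what the embedding $W_K\hookrightarrow W$ guarantees.
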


\begin{proof} For (2), since $\fb_+$ is preserved by $\theta$, $Q_{+}=K\cdot \fb_+$
is closed by Proposition 4.12 of \cite{CEexp}.  
  Thus, $Q_+$ is the unique closed $K$-orbit by part (2) of 
Proposition \ref{prop_weylk} and Lemma \ref{lem_orbitclass}.  
By Remark \ref{r:isBorel}, we have $\codim(Q_{+})=\dim(\B)-\dim(\B_{\fk})=l-1$. 

For (3), we saw in Example \ref{ex:roottypes} that $\alpha_{l-1}$ and $\alpha_{l}$ are complex $\theta$-stable, and that all other simple roots
are compact. Hence, $m(s_{\alpha_l})\cdot Q_+$ and $m(s_{\alpha_{l-1}})\cdot Q_+$
are the orbits of dimension $\dim Q_{+}+1$.  We claim that they coincide.  Indeed, by part (3) of Lemma \ref{lem_monoidaction}, $m(s_{\alpha_l})\cdot Q_+=K\cdot s_{\alpha_l}(\fb_+)$ and
$m(s_{\alpha_{l-1}})\cdot Q_+=K\cdot s_{\alpha_{l-1}}(\fb_+)$.   We may choose
the representatives for $s_{\alpha_{l-1}}$ and $s_{\alpha_l}$ in $W$ so that
$\theta({\dot{s}}_{\alpha_{l-1}})={\dot{s}}_{\alpha_l}$, and since $\alpha_{l-1}$ and
$\alpha_{l}$ are perpendicular, we may assume ${\dot{s}}_{\alpha_{l-1}}$ and
${\dot{s}}_{\alpha_l}$ commute.
Note that
$s_{\alpha_l}(\fb_+)=s_{\alpha_l}s_{\alpha_{l-1}}s_{\alpha_{l-1}}(\fb_+).$   It follows that
$$
\theta({\dot{s}}_{\alpha_l}{\dot{s}}_{\alpha_{l-1}})={\dot{s}}_{\alpha_{l-1}}{\dot{s}}_{\alpha_l}=
{\dot{s}}_{\alpha_l}{\dot{s}}_{\alpha_{l-1}}.
$$
Thus, $\dot{s}_{\alpha_l} \dot{s}_{\alpha_{l-1}}
\in K$, and hence $K\cdot s_{\alpha_{l-1}}(\fb_+) = K\cdot  s_{\alpha_{l}}(\fb_+),$
which establishes the claim.
The orbit $Q_{l-1} = m(s_{\alpha_{l-1}})\cdot Q_+$ has involution $\theta_{Q_{l-1}} = 
\Ad(\dot{s}_{\alpha_{l-1}}^{-1}) \circ \theta  \circ \Ad(\dot{s}_{\alpha_{l-1}}),$
which changes the sign of the $l-1$ coordinate of $\fh$, and no other coordinates.
It now follows that $\alpha_{l-2}$ is complex $\theta_{Q_{l-1}}$-stable, while
$\alpha_1, \dots, \alpha_{l-3}$ are compact, and $\alpha_{l-1}$ and $\alpha_l$
are complex $\theta_{Q_{l-1}}$-unstable.   The remainder of the argument follows
by an easy induction similar to the proof of part (3) of Proposition
\ref{prop_sooddflag}.  Part (1) is an easy consequence of (2) and (3).
\end{proof}

%WE NEED SOME KIND OF TRANSITION HERE. 

\section{Orthogonal Eigenvalue coincidence varieties and $K$-orbits}\label{ss:irredcomp}

We prove Theorem \ref{thm:bigthm} in this section.

%THE NEXT FEW PARAGRAPHS MAY GET MOVED SOMEWHERE ELSE IN THE DOCUMENT. 

\subsection{The varieties $\overline{Y_{Q}}$ as irreducible components of $\fg(\geq i)$}
Using our work in Sections \ref{ss:YQ} and \ref{ss:Korbits}, we show that
 the Zariski closure of the varieties $\overline{Y_{Q}}$ with $\codim(Q)=i$ are irreducible components 
 of the eigenvalue coincidence varieties $\fg(\geq i)$. We consider the case where $\fg$ is type $D$ and type $B$ separately. 
 We first consider $Y_{Q}$, where the $K$-orbit $Q$ not closed.

Case I: $\fg=\fso(2l+1,\C)$, $\fk=\fso(2l,\C)$
   %Then we have the following result:
\begin{thm}\label{thm:YQs2}
Let $\fg(\geq i)$, $i=0, \dots, l-1$ be the orthogonal eigenvalue coincidence variety 
defined in (\ref{eq:gidfn}).  Let $Q=K\cdot \fb\subset \B$ be a $K$-orbit 
with $\mbox{codim}(Q)=i$.
\begin{enumerate}
\item There exists a $\theta$-stable parabolic subalgebra $\fr$ with $\fb\subset \fr$ such that the 
hypothesis of Proposition \ref{prop:dimYQs} is satisfied.  The parabolic subalgebra $\fr$ has $\theta$-stable Levi 
decomposition
\begin{equation}\label{eq:Levi2}
\fr=\fl\oplus\fu\mbox{ with } \fl_{ss}\cong\fso(2(l-i)+1,\C) \mbox{ and }\fz\cong\ (\fgl(1,\C))^{i}.
\end{equation}
Let $L_{ss}\cong SO(2(l-i)+1,\C)\subset G$ be the connected algebraic subgroup with Lie 
algebra $\fl_{ss}$.  The restriction $\theta|_{\fl_{ss}}= \theta_{2(l-i)+1}$ is the 
involution on $\fso(2(l-i)+1,\C)$ defining $\fso(2(l-i), \C)$, so that
\begin{equation}\label{eq:Kfixed}
 \fl_{ss}^{\theta}=\fl_{ss}\cap\fk\cong \fso(2(l-i),\C) \mbox{ and thus }  (L_{ss}^{\theta})^{0}=K\cap L_{ss}\cong SO(2(l-i),\C).
\end{equation}
Furthermore, $SO(2(l-i),\C)\cdot(\fb\cap\fl_{ss})$ is open in $\B_{\fso(2(l-i)+1,\C)}$.  

\item We have $\overline{Y_{Q}}=Y_{Q_{\fr}}$, and the variety $Y_{Q_{\fr}}$ is an 
irreducible component of $\fg(\geq i)$.  
%The variety $Y_{Q_{\fr}}$ is an irreducible component of $\fg(\geq l-i)$.
\end{enumerate}

\end{thm}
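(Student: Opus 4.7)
The plan is to construct $\fr$ explicitly as a standard parabolic determined by the orbit codimension, using the orbit representatives from Proposition \ref{prop_sooddflag}. For part (1), I would replace $\fb$ by the explicit representative $\fb_i=\Ad(v_i)(\fb_+)$ of $Q=Q_i$ from Proposition \ref{prop_sooddflag}(3), where $v_i=u_{\alpha_l}\dot{s}_{\alpha_{l-1}}\cdots\dot{s}_{\alpha_{i+1}}$, and take $\fr=\fr_+$ to be the standard parabolic of $\fg$ whose Levi $\fl$ has simple roots $\{\alpha_{i+1},\dots,\alpha_l\}$. Directly from the root data, $\fl_{ss}\cong\fso(2(l-i)+1,\C)$ (the $\fso$-block on the indices $\pm(i+1),\dots,\pm l, 0$) and $\fz\cong\fgl(1,\C)^{i}$. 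Since $\theta=\Ad(t)$ with $t\in H$ a diagonal sign element, every standard root space is $\theta$-stable, so $\fr_+$ is $\theta$-stable; the restriction of $\theta$ to $\fl_{ss}$ is precisely $\theta_{2(l-i)+1}$, giving $\fl_{ss}\cap\fk\cong\fso(2(l-i),\C)$. The Cayley element $u_{\alpha_l}$ and each reflection representative $\dot{s}_{\alpha_j}$ for $j=i+1,\dots,l-1$ may be chosen inside the $SL(2,\C)$-subgroup attached to its simple root, hence inside $L_{ss}$, so $v_i\in L_{ss}\subset R_+$, yielding $\fb_i\subset\fr_+$ and $\fb_i\cap\fl_{ss}=\Ad(v_i)(\fb_+\cap\fl_{ss})$. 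Applying Proposition \ref{prop_sooddflag}(3) to the type $B_{l-i}$ pair $(\fl_{ss},\fl_{ss}\cap\fk)$, this latter Borel is precisely the representative of the unique open $(K\cap L_{ss})$-orbit in $\B_{\fl_{ss}}$, verifying the hypothesis of Proposition \ref{prop:dimYQs}.

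For part (2), Proposition \ref{prop:dimYQs} then gives $\overline{Y_Q}=Y_{Q_\fr}$, and $\dim Y_{Q_\fr}=\dim Y_Q=\dim\fg(\geq i)$ by Proposition \ref{prop:dimYQ} combined with Proposition \ref{prop_dimgl}(1), so it suffices to show $Y_{Q_\fr}\subset\fg(\geq i)$. For $x=\Ad(k)y$ with $k\in K$ and $y\in\fr_+$, the parabolic $\fr_+$ stabilises the standard isotropic flag $V_1\subset\cdots\subset V_i$ with $V_j=\mathrm{span}(e_1,\dots,e_j)$, so $y|_{V_i}$ has $i$ eigenvalues $c_1,\dots,c_i$; the $\beta$-duality on $\fso(2l+1,\C)$ then forces $\pm c_1,\dots,\pm c_i$ to appear in $\sigma(y)$. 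Because the projection $y\mapsto y_\fk$ zeroes only the $(l+1)$-row and column of $y$ while $V_i$ avoids $e_{l+1}$, $y_\fk$ also preserves $V_i$ with $y_\fk|_{V_i}=y|_{V_i}$, and the $\fso(2l,\C)$-duality inside $\fk$ produces the same $\pm c_1,\dots,\pm c_i$ in $\sigma(y_\fk)$. Since $k\in K$ preserves spectra and commutes with the projection onto $\fk$, these coincidences pass to $x$ and $x_\fk$, so $x\in\fg(\geq i)$.

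The main obstacle is aligning the choice of $\fr$ with the explicit orbit data of Proposition \ref{prop_sooddflag} so that the openness hypothesis of Proposition \ref{prop:dimYQs} and the spectral containment hold simultaneously. The identification $\fr=\fr_+$ is what ensures the conjugating element $v_i$ lives in the smaller Levi $L_{ss}$, reducing the openness check to a smaller-rank instance of the same proposition, while also producing isotropic flags disjoint from $e_{l+1}$ that yield the shared eigenvalues.
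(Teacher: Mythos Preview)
Your proposal is correct and follows essentially the same approach as the paper. Part (1) is identical: the paper also takes the explicit representative $\fb_i$ from Proposition \ref{prop_sooddflag}(3), the standard parabolic with Levi simple roots $\{\alpha_{i+1},\dots,\alpha_l\}$, observes $v=u_{\alpha_l}\dot{s}_{\alpha_{l-1}}\cdots\dot{s}_{\alpha_{i+1}}\in L_{ss}$, and reduces openness to the smaller-rank case via Equation (\ref{eq:openborelodd}). For part (2), the paper phrases the containment $Y_{Q_\fr}\subset\fg(\geq i)$ via the Levi projection---writing $x_\fl=x_\fz+x_{\fl_{ss}}$ and noting $\fz\subset\fk$ so that $x_\fz$ contributes to both $\sigma(x)$ and $\sigma(x_\fk)$---whereas you use the equivalent flag picture, observing that the isotropic flag $V_1\subset\cdots\subset V_i$ is preserved by both $y$ and $y_\fk$ with identical restriction; these are two presentations of the same fact.
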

\begin{proof}
We first prove (1).    By $K$-equivariance, it suffices
to prove the statement for any representative $\fb$ of the $K$-orbit $Q$ of codimension
$i$.  By part (3) of Proposition \ref{prop_sooddflag}, we can take 
$\fb=\fb_{i}=\Ad(u_{\alpha_{l}})s_{\alpha_{l-1}}\dots s_{\alpha_{i+1}}(\fb_{+})$.  
Let $\fr\subset \fg$ be the standard parabolic subalgebra generated by $\fb_{+}$ and the
negative simple root spaces $\fg_{-\alpha_{l}}, \fg_{-\alpha_{l-1}},\dots, \fg_{-\alpha_{i+1}}.$ 
Note that $\fr$ is $\theta$-stable with Levi decomposition (\ref{eq:Levi2}) and  also $\theta|_{\fl_{ss}}=\theta_{2(l-i)+1}$.  Equation (\ref{eq:Kfixed}) follows.  To see that $\fb_i\subset \fr$, note that we can choose the representative $\dot{s}_{\alpha_{j}}$ of $s_{\alpha_{j}}$ so that $\dot{s}_{\alpha_{j}}\in L_{ss}$ for $j=i+1,\dots, l$, and 
$u_{\alpha_{l}}\in L_{ss}$ by Equation (\ref{eq:cayley}).  Thus, the element 
\begin{equation}\label{eq:v}
v:=u_{\alpha_{l}}\dot{s}_{\alpha_{l-1}}\dots \dot{s}_{\alpha_{i+1}}\in L_{ss}\subset R.  
\end{equation}
Hence, $\fb_{i}=\Ad(v)\fb_{+}\subset \Ad(v)\fr=\fr$.

It remains to show that $(K\cap L_{ss})\cdot (\fb\cap\fl_{ss})$ can be identified with the open 
$SO(2(l-i),\C)$-orbit in the flag variety $\B_{\fso(2(l-i)+1,\C)}$.  
Note that $\fb_{+}\cap \fl_{ss}$ can be identified with the standard Borel 
subalgebra $\fb_{+,\fso(2(l-i)+1,\C)}$ of upper triangular matrices 
in $\fso(2(l-i)+1,\C)$.  Since the element $v$ in Equation (\ref{eq:v}) is in $L_{ss}$, we have:
$$
\fb\cap\fl_{ss}=(\Ad(v)\fb_{+})\cap\fl_{ss}=\Ad(v)(\fb_{+}\cap\fl_{ss})=\Ad(v)\fb_{+,\fso(2(l-i)+1,\C)}.  
$$
It follows from Equations (\ref{eq:openborelodd}) and (\ref{eq:v})
 that $\Ad(v)\fb_{+,\fso(2(l-i)+1,\C)}\subset \B_{\fso(2(l-i)+1,\C)}$ is a representative 
of the open $SO(2(l-i),\C)$-orbit on $\fb_{\fso(2(l-i)+1,\C)}$. 

We now prove (2).  The first statement of (2) follows 
immediately from part (1) and Proposition \ref{prop:dimYQs}.  
By Proposition \ref{prop:dimYQ}, to see that $Y_{\fr}$ is an irreducible component of $\fg(\geq i)$, it suffices 
to show that $Y_{\fr}\subset \fg(\geq i)$.  
Consider the partial Kostant-Wallach map $\Phi_{n}$ defined in Equation (\ref{eq:partial}).  
Let $\fq$ be a parabolic subalgebra of $\fg$ with $\fq\in Q_{\fr}$, and let 
$y\in \fq$.  We need to show that $\Phi_{n}(y)\in V^{r_{n-1}, r_{n}}(\geq i)$.  
Since the map $\Phi_{n}$ is $K$-invariant, it is enough to show that 
$\Phi_{n}(x)\in V^{r_{n-1}, r_{n}}(\geq i)$ for $x\in\fr$.  
Recall that $\Phi_{n}(x)=(\chi_{n-1}(x_{\fk}), \chi_{n}(x))$, where 
$\chi_{i}:\fso(i,\C)\to \fso(i,\C)//SO(i,\C)$ is the adjoint quotient. 
For $x\in\fr$, let $x_{\fl}$ be the projection of $x$ onto $\fl$ off of $\fu$.  
It is well-known that $\chi_{n}(x)=\chi_{n}(x_{\fl})$.  Using the decomposition in 
(\ref{eq:Levi2}), we can write $x_{\fl}$ as $x_{\fl}=x_{\fz}\oplus x_{\fl_{ss}}$ with
$x_{\fz}\in\fz\cong (\fgl(1,\C))^{i}$ and $x_{\fl_{ss}}\in\fl_{ss}=\fso(2(l-i)+1,\C)$.   
It is easy to see that the coordinates of $x_{\fz}$ are in the spectrum of $x$. 
Since $\fr$ is $\theta$-stable, $\fk\cap\fr$ is a parabolic subalgebra of $\fk$ with 
Levi decomposition: $$\fk\cap\fr=\fk\cap\fl\oplus\fk\cap\fu \mbox{ and }
\fk\cap\fl=\fk\cap\fz\oplus\fk\cap\fl_{ss}\cong\fz\oplus \fso(2(l-i),\C), $$
where the isomorphism follows from (\ref{eq:Kfixed}) and the observation that $\fz\subset\fh\subset\fk$ (see Section \ref{ss:symmetricreal}).
Since $x_{\fk}\in\fk\cap\fr$, we know $\chi_{n-1}(x_{\fk})=\chi_{n-1}((x_{\fk})_{\fl\cap\fk}).$  
Now, $(x_{\fk})_{\fl\cap\fk}=x_{\fz}+x_{\fso(2(l-i),\C)}$, and the coordinates of $x_{\fz}$ are in the spectrum of 
$(x_{\fk})_{\fl\cap\fk}$.  Thus, Remark \ref{r:thesame} implies that $\Phi_{n}(x)=(\chi_{n-1}(x_{\fk}), \chi_{n}(x))\in V^{r_{n-1}, r_{n}}(\geq i)$, and it follows 
that $Y_{\fr}\subset \fg(\geq i)$.  

%We claim that $Y_{\fr}$ is an irreducible component of $\fg(\geq l-i)$.  Indeed, it follows from Part (1) and 
%Proposition \ref{prop:dimYQs} that $Y_{\fr}=\overline{Y_{Q}}$.  Since $\mbox{codim}(Q)=l-i$, Proposition \ref{prop:dimYQ} implies
%that $\dim Y_{Q}=\dim \fg(\geq l-i).$  Thus, $ Y_{\fr}$  is an irreducible, closed subvariety 
%of $\fg(\geq l-i)$ of $\dim\fg(\geq l-i)$, whence $Y_{\fr}$ is an irreducible component of $\fg(\geq l-i)$.  
\end{proof}

Case II: $\fg=\fso(2l,\C)$, $\fk=\fso(2l-1,\C)$.\\

\begin{thm}\label{thm:YQs1}
Let $\fg(\geq i-1)$ for $i=1, \dots, l-1$ be the orthogonal eigenvalue coincidence variety 
defined in (\ref{eq:gidfn}).  Let $Q=K\cdot \fb\subset \B$ be a $K$-orbit 
with  $\codim(Q)=i-1$. 
\begin{enumerate}
\item  There exists a $\theta$-stable parabolic subalgebra $\fr$ with $\fb\subset \fr$, and $\fr$ satisfies
the hypothesis of Proposition \ref{prop:dimYQs}.  The parabolic subalgebra $\fr$ has $\theta$-stable Levi decomposition 
\begin{equation}\label{eq:Levi1}
\fr=\fl\oplus\fu\mbox{ with } \fl_{ss}\cong\fso(2(l-i)+2,\C) \mbox{ and }\fz\cong(\fgl(1,\C))^{i-1}.
\end{equation}
Let $L_{ss}\cong SO(2(l-i)+2,\C)\subset G$ be the connected algebraic subgroup with Lie 
algebra $\fl_{ss}$.  Then $\theta |_{\fl_{ss}}=\theta_{2l-2i+2}$, so that 
$$
 \fl_{ss}^{\theta}=\fl_{ss}\cap\fk\cong\fso(2(l-i)+1,\C) \mbox{ and thus }  (L_{ss}^{\theta})^{0}=K\cap L_{ss}\cong SO(2(l-i)+1,\C).
$$
Furthermore, $SO(2(l-i)+1,\C)\cdot (\fb\cap\fl_{ss})$ is open in $\B_{\fso(2(l-i)+2,\C)}$.
\item We have $\overline{Y_{Q}}=Y_{Q_{\fr}}$, and the variety $Y_{Q_{\fr}}$ is an 
irreducible component of $\fg(\geq i-1)$.  
\end{enumerate}

\end{thm}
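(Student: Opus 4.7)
The plan is to mirror the proof of Theorem \ref{thm:YQs2}, with the combinatorial bookkeeping adjusted for type $D$ using Proposition \ref{prop_soevenflag}. For part (1), by $K$-equivariance I may assume $\fb = \fb_i = s_{\alpha_{l-1}} s_{\alpha_{l-2}} \cdots s_{\alpha_i}(\fb_+)$, the standard representative of the codimension $i-1$ orbit $Q_i$. I take $\fr$ to be the standard parabolic of $\fg$ containing $\fb_+$ whose Levi $\fl$ is generated by $\fh$ together with the root spaces for $\pm\alpha_i, \pm\alpha_{i+1}, \ldots, \pm\alpha_l$. Since $\theta = \theta_{2l}$ swaps $\alpha_{l-1}$ and $\alpha_l$ and fixes $\alpha_1, \ldots, \alpha_{l-2}$ (Section \ref{ss:symmetricreal}), the excluded set $\{\alpha_1, \ldots, \alpha_{i-1}\}$ is $\theta$-stable, hence $\fr$ is $\theta$-stable. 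The semisimple part $\fl_{ss}$ is then generated by $l-i+1$ simple roots forming a $D_{l-i+1}$ subdiagram, so $\fl_{ss} \cong \fso(2(l-i)+2,\C)$ and $\dim \fz = i-1$. The restriction $\theta|_{\fl_{ss}}$ swaps the last two simple roots of $\fl_{ss}$ and fixes the remaining ones, so it coincides with $\theta_{2l-2i+2}$, yielding $\fl_{ss} \cap \fk \cong \fso(2(l-i)+1,\C)$, as required.

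For the openness hypothesis of Proposition \ref{prop:dimYQs}, I choose representatives $\dot{s}_{\alpha_j} \in L_{ss}$ for $j = i, \ldots, l-1$, so that $v := \dot{s}_{\alpha_{l-1}} \dot{s}_{\alpha_{l-2}} \cdots \dot{s}_{\alpha_i}$ lies in $L_{ss} \subset R$. Then $\fb_i = \Ad(v)\fb_+ \subset \fr$ and $\fb_i \cap \fl_{ss} = \Ad(v)(\fb_+ \cap \fl_{ss})$, where $\fb_+ \cap \fl_{ss}$ is the standard upper-triangular Borel of $\fl_{ss}$. Relabeling the simple roots $\alpha_i, \ldots, \alpha_l$ of $\fl_{ss}$ as $\beta_1, \ldots, \beta_{l-i+1}$ (so that $\beta_k = \alpha_{i+k-1}$), the word $v$ becomes $\dot{s}_{\beta_{l-i}} \dot{s}_{\beta_{l-i-1}} \cdots \dot{s}_{\beta_1}$, which is precisely the word of formula (\ref{eq:openboreleven}) producing the open $(K \cap L_{ss})$-orbit representative in $\B_{\fl_{ss}}$. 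Hence $(K \cap L_{ss}) \cdot (\fb_i \cap \fl_{ss})$ is open in $\B_{\fl_{ss}}$, and Proposition \ref{prop:dimYQs} delivers $\overline{Y_Q} = Y_{Q_{\fr}}$. Matching these Weyl group words across the two labelings is the only step that is not entirely automatic.

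For part (2), combining part (1) of Proposition \ref{prop_dimgl} with Proposition \ref{prop:dimYQ} gives $\dim Y_{Q_{\fr}} = \dim \fg(\geq i-1)$, so it suffices to establish the inclusion $Y_{Q_{\fr}} \subset \fg(\geq i-1)$. By $K$-invariance of $\Phi_n$ this reduces to showing $\Phi_n(x) \in V^{r_{n-1}, r_n}(\geq i-1)$ for every $x \in \fr$. Writing $x_{\fl} = x_{\fz} + x_{\fl_{ss}}$, the $i-1$ signed eigenvalues $\pm a_1, \ldots, \pm a_{i-1}$ of the diagonal element $x_{\fz}$ appear in the spectrum of $x$, because $\chi_n(x) = \chi_n(x_{\fl})$. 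Next, a direct calculation shows that elements of $\fz$ vanish in the $a_l$ coordinate of $\fh$, so $\fz \subset \fh_{\fk} \subset \fk$; consequently $(x_{\fk})_{\fl \cap \fk} = x_{\fz} + x_{\fl_{ss} \cap \fk}$, and these same $i-1$ signed eigenvalues appear in the spectrum of $x_{\fk}$. Remark \ref{r:thesame} then gives $\Phi_n(x) \in V^{r_{n-1}, r_n}(\geq i-1)$, completing the proof. The overall argument is a faithful translation of the type $B$ proof, with a type $D_{l-i+1}$ Levi in place of the type $B_{l-i}$ Levi of Theorem \ref{thm:YQs2}.
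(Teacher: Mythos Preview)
Your proof is correct and follows essentially the same approach as the paper's proof: you use the same representative $\fb_i$ from Proposition \ref{prop_soevenflag}, the same standard parabolic $\fr$ generated by $\fb_+$ and $\fg_{-\alpha_i},\dots,\fg_{-\alpha_l}$, the same $\theta$-stability argument via the action on simple roots, and the same appeal to Equation (\ref{eq:openboreleven}) (via your relabeling $\beta_k=\alpha_{i+k-1}$) for the openness of $(K\cap L_{ss})\cdot(\fb_i\cap\fl_{ss})$. Your argument for $\fz\subset\fk$ via the vanishing of the $\epsilon_l$-coordinate is exactly the concrete content of the paper's observation that $\theta$ permutes the simple roots of $\fl$, and part (2) is then handled identically to Theorem \ref{thm:YQs2}.
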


\begin{proof}
The proof is very similar to the proof of Theorem \ref{thm:YQs2}.  
We begin with the proof of part (1).  
  Again, by $K$-equivariance, it suffices to prove the statement 
for any representative $\fb$ of the $K$-orbit $Q$ of codimension $i-1$.  By part (3) of Proposition \ref{prop_soevenflag}, we can 
take $\fb=\fb_{i}=s_{\alpha_{l-1}}s_{\alpha_{l-2}}\dots s_{\alpha_{i}}(\fb_{+})$. 
Let $\fr\subset \fg$ be the standard parabolic subalgebra generated by $\fb_{+}$ and the
negative simple root spaces $\fg_{-\alpha_{l}}, \fg_{-\alpha_{l-1}},\dots, \fg_{-\alpha_{i}}.$  
We claim that $\fr$ is $\theta$-stable.  Indeed, we saw in Example \ref{ex:roottypes} that the roots
$\alpha_{i}$ are compact imaginary for $i=1,\dots, l-2$ and that $\alpha_{l-1}$ and $\alpha_{l}$
are complex $\theta$-stable with $\theta(\alpha_{l-1})=\alpha_{l}$.  It then follows easily that $\fr$ has Levi decomposition (\ref{eq:Levi1}) and that 
$\theta|_{\fl_{ss}}=\theta_{2(l-i)+2}$, whence $\fl_{ss}^{\theta}=\fk\cap\fl_{ss}\cong \fso(2(l-i)+1,\C)$, and
$(L_{ss}^{\theta})^{0}=K\cap L_{ss}\cong SO(2(l-i)+1,\C)$.  The remainder of the proof 
proceeds exactly as in the proof of part (1) of Theorem \ref{thm:YQs2}, using Equation (\ref{eq:openboreleven}) instead of Equation 
(\ref{eq:openborelodd}).   

The proof of (2) is also analogous to the proof of part (2) of Theorem \ref{thm:YQs2}.  
 The key observation is that for $x\in\fr$ with $x_{\fl}=x_{\fz}\oplus x_{\fl_{ss}}$ the coordinates of $x_{\fz}\in\fz\cong(\fgl(1,\C))^{i-1}$ are in the spectrum of both $x\in\fg$ and $x_{\fk}\in\fk$.  To show this, one observes that $\fz \subset \fk$, which follows since $\theta$ permutes the simple roots of $\fl$.   We leave the remaining
details to the reader.
\end{proof}

\begin{rem}\label{r:centerink}
Note that $\fz \subset \fk$, where $\fz$ is the centre of the Levi subalgebras $\fl$ in 
Theorems \ref{thm:YQs2} and \ref{thm:YQs1}.
\end{rem}

We now consider the case where $Q$ is a closed $K$-orbit. 
\begin{thm}\label{thm:YQclosed}
Let $Q$ be a closed $K$-orbit on $\B$.  Then $Y_{Q}$ is an irreducible component of $\fg(\geq r_{n-1})=\fg(r_{n-1})$.  
\end{thm}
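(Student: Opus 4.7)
The plan is to show that $Y_Q$ is a closed, irreducible subvariety of $\fg(\ge r_{n-1})$ whose dimension equals $\dim\fg(\ge r_{n-1})$; since $\fg(\ge r_{n-1})$ is equidimensional by Proposition~\ref{prop_dimgl}(1), this will make $Y_Q$ an irreducible component. Closedness of $Y_Q$ comes from Remark~\ref{r:closedorbitcase}, and $Y_Q=\Ad(K)\fb$ is irreducible as the image of the irreducible variety $K\times\fb$ under the action map. By Remark~\ref{r:isBorel}, $\dim Q=\dim\B_\fk$, and a direct count of positive roots shows that the codimension of $Q$ in $\B$ is $r_{n-1}$ in both type $B$ and type $D$; Proposition~\ref{prop:dimYQ} then gives $\dim Y_Q=\dim\fg(\ge r_{n-1})$. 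Moreover, the convention $V^{r_{n-1},r_n}(r_{n-1}+1)=\emptyset$ in Definition~\ref{ref:thenotation} forces $\fg(\ge r_{n-1}+1)=\emptyset$, so $\fg(\ge r_{n-1})=\fg(r_{n-1})$.

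It remains to verify the inclusion $Y_Q\subset\fg(\ge r_{n-1})$. Since $\Phi_n$ is $K$-invariant, this reduces to checking $\Phi_n(y)\in V^{r_{n-1},r_n}(\ge r_{n-1})$ for $y$ in a chosen representative $\fb$ of $Q$. I will take $\fb$ to be one of the $\theta$-stable representatives $\fb_\pm$ in type $B$, or $\fb=\fb_+$ in type $D$, as provided by Propositions~\ref{prop_sooddflag}(2) and \ref{prop_soevenflag}(2); in each case $\fb$ contains the standard Cartan $\fh$, and by Remark~\ref{r:isBorel} the intersection $\fb\cap\fk$ is a Borel subalgebra of $\fk$. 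Writing $y=h+n$ with $h\in\fh$ the diagonal part and $n\in[\fb,\fb]$, the spectra $\sigma(y)$ and $\sigma(y_\fk)$ are determined by $h$ and by $h_\fk=\tfrac12(h+\theta(h))\in\fh\cap\fk$ respectively. By Remark~\ref{r:thesame}, it then suffices to show that $\sigma(y)$ and $\sigma(y_\fk)$ share $2r_{n-1}$ elements.

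The core computation is the eigenvalue comparison in the two types. In type $B$, $\theta=\Ad(t)$ acts trivially on $\fh$, so $\fh\subset\fk$ and $h_\fk=h$; hence $\sigma(y)=\sigma(y_\fk)=\{\pm a_1,\dots,\pm a_l\}$ and the two share all $2l=2r_{n-1}$ elements. In type $D$, $\theta$ sends $\eps_l$ to $-\eps_l$ and fixes each $\eps_i$ for $i<l$, so for $h=\mbox{diag}[a_1,\dots,a_l,-a_l,\dots,-a_1]$ one obtains $h_\fk=\mbox{diag}[a_1,\dots,a_{l-1},0,0,-a_{l-1},\dots,-a_1]$; applying the spectrum convention in the odd-dimensional algebra $\fso(2l-1,\C)$, which drops one of the two zero eigenvalues, gives $\sigma(y_\fk)=\{\pm a_1,\dots,\pm a_{l-1}\}$, sharing $2(l-1)=2r_{n-1}$ elements with $\sigma(y)=\{\pm a_1,\dots,\pm a_l\}$. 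The hard step will be the type $D$ bookkeeping, where $\fh\not\subset\fk$ and one must carefully combine the projection $h\mapsto h_\fk$ with the $\fso(n-1,\C)$ spectrum convention in order to get precisely $2r_{n-1}$ shared eigenvalues; the type $B$ case is essentially immediate once one observes $\fh\subset\fk$.
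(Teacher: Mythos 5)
Your proposal is correct and follows essentially the same route as the paper: reduce by $K$-equivariance and Proposition \ref{prop:dimYQ} (plus closedness of $Y_Q$ and equidimensionality of $\fg(\geq r_{n-1})$) to checking that a $\theta$-stable representative $\fb$ of the closed orbit lies in $\fg(\geq r_{n-1})$, then use Remark \ref{r:isBorel} to see $\fb\cap\fk$ is a Borel of $\fk$ and compare the Cartan components $x_{\fh}$ and $x_{\fh\cap\fk}$ in types $B$ and $D$ separately. The eigenvalue bookkeeping you carry out (including the type $D$ projection $h\mapsto h_{\fk}$ and the odd-orthogonal spectrum convention) is exactly the computation in the paper's proof, so no further changes are needed.
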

\begin{proof}
We show that for a closed $K$-orbit $Q=K\cdot\fb$, the subvariety $Y_{Q}\subset \fg(r_{n-1})$.  It then 
follows from Proposition \ref{prop:dimYQ} that $Y_{Q}$ is an irreducible component of $\fg(r_{n-1})$.  
By $K$-equivariance, it suffices to show that $\fb\subset\fg( r_{n-1})$.  
If $\fg$ is of type $B$, then part (2) of Proposition \ref{prop_sooddflag} implies that 
$\fb=\fb_{+}$ or $\fb=s_{\alpha_{l}}(\fb_{+})$.  In either case, $\fb$ contains 
the standard diagonal Cartan subalgebra $\fh$ of $\fg$.  
%Since $\theta=\Ad(t)$ with $t\in H$, it follows that $\fh\subset\fk$.  
Now by Remark \ref{r:isBorel}, $\fb\cap\fk$ is a Borel subalgebra of $\fk$ with Levi decomposition 
$$
\fb\cap\fk=\fh\oplus(\fn\cap\fk), 
$$
where $\fn=[\fb,\fb]$ is the nilradical of $\fb$.  Thus, 
for $x\in\fb$ with $x=x_{\fh}+ x_{\fn}$, with $x_{\fh}\in\fh$ and $x_{\fn}\in\fn$, 
the coordinates of $x_{\fh}$ are in the spectrum of both $x$ and $x_{\fk}$.  
It follows that $\fb\subset\fg(r_{n-1})$.  

If $\fg$ is of type $D$, then part (2) of Proposition \ref{prop_soevenflag} 
states that $Q_{+}=K\cdot\fb_{+}$ is the only closed $K$-orbit.  We recall that 
$\theta(\epsilon_{r_{n}})=-\epsilon_{r_{n}}$, and $\theta(\epsilon_{i})=\epsilon_{i}$ for all $i\neq r_{n}$ 
(see Section \ref{ss:symmetricreal}).  Therefore, $\fh\cap\fk=\mbox{diag}[b_{1},\dots, b_{r_{n-1}},0,0, -b_{r_{n-1}}, \dots, -b_{1}]$, 
and $\fb\cap \fk$ is a Borel subalgebra of $\fk$ with Levi decomposition $\fb\cap\fk=\fh\cap\fk\oplus\fn\cap\fk$.  
Thus, for any $x\in\fb$, $x=x_{\fh}+x_{\fn}$, and $x_{\fh}=x_{\fh\cap \fk}+x_{\fh\cap\fg^{-\theta}}$, with 
$x_{\fk}=x_{\fh\cap\fk}+x_{\fn\cap\fk}\in\fk\cap\fb.$  Thus, $x_{\fh\cap\fk}\in \fh$ is in the spectrum of both $x$ and $x_{\fk}$.  
It follows that $\fb_{+}\subset\fg(r_{n-1})$.

\end{proof}

\begin{rem}\label{r:firstspremark}
As we noted in Remark \ref{r:itholds}, the hypothesis of Proposition \ref{prop:dimYQs} is true for the real rank one symmetric pair $(\fg=\fgl(n,\C), \fk=\fgl(n-1,\C)\oplus\fgl(1,\C))$ and for any $K$-orbit $Q$.  The analogue of part (2) of Theorems 
\ref{thm:YQs2} and \ref{thm:YQs1} also holds in this setting (see Theorems 3.6 and 3.7, \cite{CEeigen}).

Let $(\fg, \fk)$ be a symmetric pair, and let 
$Q=K\cdot\fb\subset \B$ be an arbitrary $K$-orbit in the flag variety $\B$ of $\fg$.  Then if 
$\fr\subset\fg$ is a $\theta$-stable parabolic subalgebra with $\fb\subset\fr$, the $K$-orbit $Q$ has
the structure of a fibre bundle as in (\ref{eq:bundle}).  However, $(K\cap L_{ss})\cdot (\fb\cap\fl_{ss})$ need not be the open $K$-orbit in $\B_{\fl_{ss}}$.

The hypothesis of Proposition \ref{prop:dimYQs} does not hold for
 the real rank one symmetric pair with $\fg=\mathfrak{sp}(2n,\C)$ and $\fk=\fsp(2n-2,\C)\oplus\fsp(2, \C).$  Further, one
can show that for this case,
 the varieties $Y_Q$ are not irreducible components of
the natural 
eigenvalue coincidence varieties.   It would be interesting to further
analyze objects analogous to those studied in this paper in that example.
\end{rem}

%We have the following immediate consequence of the theorem: 

\subsection{Every irreducible component of $\fg(\geq i)$ is of the form $\overline{Y_{Q}}$}
In this section, we complete the last step of the proof of Theorem \ref{thm:bigthm}.  Consider the regular semisimple elements $\fk^{rs}$ of $\fk$, and let 
$\fh_{\fk}^{reg}=\fk^{rs}\cap\fh_{\fk}$.  For $x$ in $\fg$, consider the spectrum $\sigma(x_{\fk})=\{\pm a_{1},\dots, \pm a_{r_{n-1}}\}$ of $x_{\fk}$.  If $\fk$ is type $D$, $x_{\fk} \in \fk^{rs}$ if and only if $a_i \neq \pm a_j$ for $i\neq j$.
If $\fk$ is type $B$, $x_{\fk}\in \fk^{rs}$ if and only if $a_i  \neq \pm a_j$ for
$i\neq j$, and all $a_i \neq 0.$

%To prove Theorem \ref{thm:bigthm}, it remains to show 
%that every irreducible component of the variety $\fg(\geq i)$ defined 
%in (\ref{eq:fgl}) is of the form $\overline{Y_{Q}}$ for some $K$-orbit $Q$ in $\B$ 
%with $\mbox{codim}(Q)=i$.  %To prove this, we need to have a better 
%understanding of the partial Kostant-Wallach map $\Phi_{n}:\fso(n,\C)\to V^{r_{n-1}, r_{n}}$. 

%$$
%\fg_{\Omega}=\displaystyle\bigcup_{c\in V^{r_{n-1}, r_{n}}(\Omega) }\Phi^{-1}(c).  
%$$

%Considering Part (2) of Theorems \ref{thm:YQs2} and \ref{thm:YQs1},  to prove 
%Theorem \ref{thm:bigthm}, it remains to prove: 

%To prove Theorem \ref{thm:bigthm}, it remains to show 
%that every irreducible component of the variety $\fg(\geq i)$ defined 
%in (\ref{eq:coincidences}) is of the form $Y_{Q}$ for some $K$-orbit $Q$ in $\B$ 
%with $\mbox{codim}(Q)=i$.  

%\end{lem}

\begin{thm}\label{thm:exhaustion}
Every irreducible component of the variety $\fg(\geq i)$, $i=0,\dots, r_{n-1}$ is of the form
$\overline{Y_{Q}}$ for some $K$-orbit $Q$ on $\B$ with $\mbox{codim}(Q)=i$.  
\end{thm}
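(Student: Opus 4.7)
The plan is to show that each irreducible component $X$ of $\fg(\geq i)$ equals $\overline{Y_Q}$ for some $K$-orbit $Q$ on $\B$ with $\codim Q = i$. I start from the observation that $\dim X = \dim \fg - i$ by Proposition \ref{prop_dimgl}(1), and that $\fg(\geq i+1)$ has strictly smaller dimension, so the locally closed subset $U := X \cap \fg(i) = X \setminus \fg(\geq i+1)$ is open and dense in $X$. Using the finite covering $\fg = \bigcup_{Q} Y_Q$ from Remark \ref{r:Qstrat}, indexed by $K$-orbits $Q$ on $\B$, and the irreducibility of $U$, one of the constructible pieces $U \cap Y_{Q_0}$ must be dense in $U$; in particular $X \subset \overline{Y_{Q_0}}$ and a generic point of $X$ lies in $Y_{Q_0} \cap \fg(i)$.

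The heart of the proof is to establish $\codim Q_0 = i$, which by irreducibility and comparison of dimensions immediately gives $X = \overline{Y_{Q_0}}$. The inequality $\codim Q_0 \leq i$ follows directly: Theorems \ref{thm:YQs2}, \ref{thm:YQs1}, and \ref{thm:YQclosed} yield $Y_{Q_0} \subset \fg(\geq \codim Q_0)$, while the generic point of $X$ has exactly $i$ coincidences, forcing $\codim Q_0 \leq i$.

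The main obstacle is the reverse inequality $\codim Q_0 \geq i$. I plan to argue by induction on the rank of $\fg$, exploiting the explicit description $\overline{Y_{Q_0}} = Y_{Q_\fr}$ from Theorems \ref{thm:YQs2} and \ref{thm:YQs1}, where $\fr = \fl \oplus \fu$ is a $\theta$-stable parabolic with Levi decomposition $\fl = \fz \oplus \fl_{ss}$, centre $\fz \subset \fk$ of dimension $\codim Q_0$ (Remark \ref{r:centerink}), and $\fl_{ss}$ a smaller orthogonal Lie algebra. For any $y \in \fr$, the coordinates of $y_\fz$ furnish exactly $\codim Q_0$ coincidences between $\sigma(y)$ and $\sigma(y_\fk)$, and generically there are no further coincidences. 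Setting $j := \codim Q_0$ and supposing $j < i$, a generic point $x$ of $X$ would exhibit $i - j$ extra coincidences carried by its $\fl_{ss}$-component, so $x_{\fl_{ss}}$ would lie in the analogous eigenvalue coincidence variety $\fl_{ss}(\geq i - j)$ for the induced orthogonal symmetric pair $(\fl_{ss}, \fk \cap \fl_{ss})$. The inductive hypothesis applied to $\fl_{ss}$, whose rank is strictly smaller, produces a $(K \cap L_{ss})$-orbit $Q'$ on $\B_{\fl_{ss}}$ of codimension $i - j$ with $x_{\fl_{ss}} \in \overline{Y_{Q'}}$. A Borel $\fb' \in Q'$ of $\fl_{ss}$ then extends to the Borel $\fz \oplus \fb' \oplus \fu$ of $\fg$, whose $K$-orbit $Q_1$ on $\B_\fg$ has codimension $i$ and contains $x$ in $\overline{Y_{Q_1}}$. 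Equality of dimensions forces $X = \overline{Y_{Q_1}}$, contradicting $j < i$.

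A separate verification is needed for the base case of the induction, handled by direct inspection of the low-rank orbit data in Propositions \ref{prop_sooddflag} and \ref{prop_soevenflag}. In type $B$ at $i = l$, one must also check that the two closed orbits $Q_{\pm}$ yield distinct irreducible components $Y_{Q_{\pm}}$; this follows from $Q_+ \neq Q_-$ and the fact that the Borels $\fb_{\pm}$ are not $K$-conjugate, so a generic element of one of $Y_{Q_{\pm}}$ cannot lie in the other.
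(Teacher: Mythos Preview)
Your overall strategy is plausible but contains a genuine gap in the inductive step, and your approach is quite different from the paper's.

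\textbf{The gap.} You choose $Q_0$ so that $U\cap Y_{Q_0}$ is dense in $U=X\cap\fg(i)$, set $j=\codim Q_0$, and then reduce to $\fl_{ss}$, which you claim has strictly smaller rank. But the rank of $\fl_{ss}$ is $r_n-j$, so the reduction only works when $j\ge 1$. Your construction does not rule out $j=0$: the sets $Y_Q$ are not disjoint, and it is entirely possible that $Y_{Q_{\text{open}}}\cap U$ is dense in $U$ even when $i\ge 1$, since elements with eigenvalue coincidences can perfectly well lie in Borel subalgebras belonging to the open $K$-orbit. If $j=0$ then $\fr=\fg$, $\fl_{ss}=\fg$, and nothing has been reduced. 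Ruling out $j=0$ amounts to showing that every element of $\fg(\ge 1)$ lies in $\bigcup_{\codim Q\ge 1}Y_Q$, which is precisely the $i=1$ case of the theorem---for $\fg$ of arbitrary rank, not just low rank. So your remark that the base case is ``handled by direct inspection of the low-rank orbit data'' misidentifies what the base case actually is. Even if one tries to restructure the induction (say on $i$ rather than on rank), one is still forced to establish the $i=1$ statement for every orthogonal $\fg$ by some independent argument; your proposal does not supply one.

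\textbf{Comparison with the paper.} The paper avoids this circularity by a direct computation rather than induction. It first shows, using flatness of $\Phi_n$, that a specific open set $U(\ge i)$ (those $x$ with $x_\fk$ regular semisimple and $0\notin\sigma(x_\fk)$) meets every irreducible component of $\fg(\ge i)$. It then reduces, by $K$-conjugacy, to a concrete slice $\Xi$ of elements with $x_\fk$ diagonal, writes down the eigenvalue-coincidence conditions explicitly (they become $u_jv_j=0$ for certain matrix entries), and uses explicit Weyl group elements in $W_K$ to conjugate each irreducible piece $\Xi_{j_1,\dots,j_i}$ into the standard parabolic $\fr$ (or into $\fb_\pm$ when $i=r_{n-1}$ in type $B$). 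This hands-on approach proves $U(\ge i)\subset\bigcup_{\codim Q=i}\overline{Y_Q}$ in one stroke, with no appeal to smaller-rank cases. Your inductive idea is more conceptual and, if the $j=0$ obstruction could be removed (for instance by first proving the $i=1$ case by the paper's computational method), the remainder of your argument---the bipartite-matching style reduction to $\fl_{ss}(\ge i-j)$ and the bundle calculation of $\codim Q_1$---can be made rigorous and would give an alternative proof.
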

\begin{proof}

Consider the set:
\begin{equation}\label{eq:U}
U:=\{x\in\fg :\; x_{\fk}\in\fk^{rs} \mbox{ and } 0\notin \sigma(x_{\fk})\}.
\end{equation}
Let $U(\geq i):=U\cap \fg(\geq i)$.  Note that $\fh\cap U(\geq i)\neq \emptyset$, so that 
$U$ and $U(\geq i)$ are non-empty Zariski open subsets of $\fg$ and $\fg(\geq i)$ respectively.  
By Proposition \ref{prop:flat}
and Exercise III.9.1 of \cite{Ha}, $\Phi_{n}(U)\subset V^{r_{n-1}, r_{n}}$ is open. Thus, 
$V^{r_{n-1}, r_{n}}(\geq i)\setminus \Phi_{n}(U)$ is a proper, closed subvariety of $V^{r_{n-1},r_{n}}(\geq i)$
and therefore has positive codimension by Lemma \ref{lem:vnirreducible}.  
It follows by Propositions \ref{prop:flat} and \ref{prop_dimgl} and Corollary III.9.6 of \cite{Ha} that 
$\fg(\geq i)\setminus U(\geq i)=\Phi^{-1}_{n}(V^{r_{n-1},r_{n}}(\geq i)\setminus \Phi_{n}(U))$ 
is a proper, closed subvariety of $\fg(\geq i)$ of positive codimension.  Since $\fg(\geq i)$ is equidimensional, 
it follows that $Z\cap U(\geq i)$ is nonempty for any irreducible component $Z$ of $\fg(\geq i)$. 
Thus, it suffices to show that 
\begin{equation}\label{eq:tricky}
U(\geq i)\subset\bigcup_{\codim(Q)=i}\overline{Y_{Q}}.
\end{equation}  

To prove Equation (\ref{eq:tricky}), we consider the following subvariety of $U(\geq i)$:
\begin{equation}\label{eq:XI}
\Xi:=\{x\in U(\geq i):\; x_{\fk}=(a_{1},\dots, a_{r_{n-1}})\in\fh_{\fk}^{reg}, \mbox{ and } \sigma(x_{\fk})\cap\sigma(x)\supset\{\pm a_{1},\dots, \pm a_{i}\},\, a_{j}\neq 0\, \forall j\}. 
\end{equation}
It is easy to check that any element of  $U(\geq i)$ is $K$-conjugate
to an element in $\Xi$.  Thus, by the $K$-equivariance of the varieties $\overline{Y_{Q}}$, it is enough to show that 
\begin{equation}\label{eq:Xicontainment}
\Xi\subset\displaystyle\bigcup_{\mbox{codim}(Q)=i} \overline{Y_{Q}}. 
\end{equation}
%To show Equation \ref{eq:Xicontainment}, 

We consider the cases where $\fg$ is type $B$ and type $D$ separately.  First, we assume that $\fg=\fso(2l+1,\C)$.   By Theorem \ref{thm:YQs2}, it suffices
to show that 
\begin{equation}\label{eq:ilessl}
\Xi\subset Y_{Q_{\fr}} \mbox{ for } i<l, 
\end{equation}
where $\fr$ is the parabolic subalgebra generated by $\fb_{+}$ and the negative 
simple root spaces $\fg_{-\alpha_{i+1}},\dots, \fg_{-\alpha_{l}}$.  For $i=l$ we need to show that 
\begin{equation}\label{eq:iisl} 
\Xi\subset Y_{Q_{+}}\cup Y_{Q_{-}}, 
\end{equation}
where $Q_{+}=K\cdot\fb_{+}$ and $Q_{-}=K\cdot\fb_{-}$ are the distinct closed $K$-orbits on $\B$ (see part (2) of Proposition \ref{prop_sooddflag}). 
To prove Equations (\ref{eq:ilessl}) and (\ref{eq:iisl}), we need to describe
the variety $\Xi$ in more detail.  
Recall from Example \ref{ex:roottypes} that $$\fg^{-\theta}=\bigoplus_{j=1}^{l} \fg_{ \epsilon_{j}}\oplus \fg_{-\eps_{j}}.$$ 
Let $e_{\pm \eps_{j}}\in\fg_{\pm\eps_{j}}$ be a nonzero root vector.  
Consider elements of the form: 
\begin{equation}\label{eq:Xirootspace}
\underline{a}\displaystyle\oplus_{j=1}^{l} u_{j} e_{\epsilon_{j}}\oplus_{j=1}^{l} v_{j}e_{-\epsilon_{j}}, 
\end{equation}
where $\underline{a}=\mbox{diag}[a_{1},\dots, a_{l}, 0, -a_{l}, \dots, -a_{1}]\in\fh, \, a_{i}\neq \pm a_{j} \mbox{ if } i\not= j, \mbox{ and each } a_{i}\neq 0$.  
We choose the root vectors $e_{\pm \eps_{j}}$ so that 
$\Xi$ consists of matrices of the form:  
\begin{equation}\label{eq:BXimatrix}
X:=\left[\begin{array}{ccccccc}
a_{1}  &  \dots & 0  & u_{1} & 0 & \dots & 0\\
\vdots & \ddots & \vdots &\vdots & \vdots & & \vdots\\
0&\dots & a_{l} & u_{l} & 0 & \dots & 0\\
v_{1}& \dots & v_{l} & 0 & -u_{l} & \dots & -u_{1}\\
0 & \dots & 0 &-v_{l}& -a_{l} & \dots &  0\\
\vdots &  & \vdots & \vdots & \vdots& \ddots & \vdots \\
0 & \dots & 0&-v_{1} & 0 &\dots & -a_{1}\end{array}\right]
\end{equation}
with $a_k\not=\pm a_j$ for $k\not= j$, $a_j\not= 0$ for $j=1, \dots, l$,
and $\pm a_j$ is an eigenvalue of $X$ for $j=1, \dots, i$.
It is easy to see that the elements $\pm a_j$ for $j=1, \dots, i$
are eigenvalues of $X$ if and only if
\begin{equation}\label{eq:overlapconds}
u_{j}v_{j}=0  \mbox{ for } j=1,\dots, i.
\end{equation}
This follows easily from the fact that $a_{j}$ is an eigenvalue of $X$ if and only if the matrix
$X-a_{j} Id_{2l+1}$ is singular, where $Id_{2l+1}$ denotes the $(2l+1)\times (2l+1)$ identity matrix.

We can now describe the irreducible components of $\Xi$ using (\ref{eq:overlapconds}).  
For $k=1,\dots, i$, we define an index $j_{k}$ equal to either $j_{k}=U$ ($U$ for upper) or $j_{k}=L$ ($L$ for lower).  Consider the subvariety $\Xi_{j_{1},\dots, j_{i}}\subset \Xi$ defined by:
 \begin{equation}\label{eq:UorL}
\Xi_{j_{1},\dots, j_{i}}:=\{ x\in\Xi: v_{k}=0\mbox{ if } j_{k}=U, u_{k}=0 \mbox{ if } j_{k}=L\}.
\end{equation}
    Then
      \begin{equation}\label{eq:upsunion}
  \Xi= \bigcup_{j_{k}=U,\, L} \Xi_{j_{1},\dots, j_{i}}
  \end{equation} 
  is the irreducible component decomposition of $\Xi$.  
  Notice that in the case $j_{k}=U$ for all $k=1,\dots, i$, then 
  \begin{equation}\label{eq:isinfr}
  \Xi_{U,\dots, U}\subset \fr.
  \end{equation}
     This follows from the observation that 
  $\epsilon_{j}=\alpha_{j}+\dots + \alpha_{l}$ for any $j=1,\dots, l$.  Thus, for $j=i+1,\dots, l$, 
  $\fg_{\pm\epsilon_{j}}\subset \fl_{ss}\subset \fr$, where $\fl_{ss}$ is the semisimple 
  part of the Levi factor of $\fr$, and $\fg_{\epsilon_{j}}\subset\fu$ for $j=1,\dots, i$ (see (\ref{eq:Levi2})).  Observe also that if $j_{k}=L$ for 
  some $k=1,\dots, i$, then 
  \begin{equation}\label{eq:LtoU}
  \Ad(\dot{s}_{\epsilon_{k}}) \Xi_{j_{1},\dots, j_{k-1}, L, \dots, j_{i}}=\Xi_{j_{1},\dots, j_{k-1}, U, \dots, j_{i}}.
  \end{equation}
 This follows immediately from the fact that $\Ad(\dot{s}_{\epsilon_{i}})\fg_{\epsilon_{j}}=\fg_{\epsilon_{j}}$ for $j\neq i$, and $\Ad(\dot{s}_{\epsilon_{i}})\fg_{\pm\epsilon_{i}}=\fg_{\mp\epsilon_{i}}.$

 We now analyze the irreducible variety $\Xi_{j_{1},\dots, j_{i}}$.  Suppose that for the subsequence $1\leq k_{1}<\dots<k_{m-1}\leq i$ we have $j_{k_{1}}=j_{k_{2}}=\dots=j_{k_{m-1}}=L$ and that for the complementary subsequence $k_{m}<\dots< k_{i}$ we have $j_{k_{m}}=j_{k_{m+1}}=\dots=j_{k_{i}}=U$.  First, suppose that $ i <l$.  Consider the element 
\begin{equation}\label{eq:sigmaweyl}
\sigma:=s_{\epsilon_{k_{1}}}s_{\epsilon_{k_{2}}}\dots s_{\epsilon_{k_{m-1}}}\in W.
\end{equation} 
 It follows from Equations (\ref{eq:isinfr}) and (\ref{eq:LtoU}) that
  \begin{equation}\label{eq:infr}
 \Ad(\dot{\sigma}) \Xi_{j_{1},\dots, j_{i}}\subset \fr. 
 \end{equation}
 Note that $s_{\eps_{j}}$ acts on the coordinates of $\fh$ by sign change in the $j$-th coordinate.  Thus, if $m-1$ is even, it follows from part (1) of Proposition \ref{prop_weylk} that 
 $\sigma\in W_{K}$, and we can choose its representative
$\dot{\sigma}\in K$.  If $m-1$ is odd,
 then replace $\sigma$ by $\tau:=s_{\epsilon_{l}}s_{\epsilon_{k_{1}}}s_{\epsilon_{k_{2}}}\dots s_{\epsilon_{k_{m-1}}}$.
 Then we can choose $\dot{\tau}\in K$, and since we can choose $\dot{s}_{\epsilon_{l}}\in L_{ss}$, Equation (\ref{eq:infr}) implies
 $$
 \Ad(\dot{\tau}) \Xi_{j_{1},\dots, j_{i}}\subset \fr.
 $$
 In either case, the component $\Xi_{j_{1},\dots, j_{i}}$ is $W_{K}$-conjugate to 
 a subvariety of $\fr$, and Equation (\ref{eq:ilessl}) follows from (\ref{eq:upsunion}).  Now consider the case where $i=l$.  
 Choose $\sigma$ as in (\ref{eq:sigmaweyl}).  Then it follows from (\ref{eq:isinfr}) that $\Ad(\dot{\sigma})\Xi_{j_{1},\dots, j_{l}}\subset\fb_{+}$.  Now if $m-1$ is even, then $\Xi_{j_{1},\dots, j_{l}}\subset Y_{Q_{+}}=\Ad(K)\fb_{+}$.  However, if $m-1$ is odd, then $\Ad(\dot{\tau})\Xi_{j_{1},\dots, j_{l}}\subset s_{\epsilon_{l}}(\fb_{+})$, whence $\Xi_{j_{1},\dots, j_{l}}\subset Y_{Q_{-}}=\Ad(K)s_{\epsilon_{l}}(\fb_{+})$.  Thus, Equation (\ref{eq:iisl}) is proven.

We now prove (\ref{eq:Xicontainment}) when $\fg=\fso(2l,\C)$.  
By Theorem \ref{thm:YQs1}, it suffices to prove 
\begin{equation}\label{eq:XicontainmentD}
\Xi\subset Y_{Q_{\fr}},
\end{equation}
where $\fr$ is the parabolic subalgebra generated by $\fb_{+}$ and the negative simple root spaces
$\fg_{-\alpha_{i+1}},\dots, \fg_{-\alpha_{l}}$ for $i<l-1$, and $\fr=\fb_{+}$ for $i=l-1$.  
Recall from Example \ref{ex:roottypes} that
$$
\fg^{-\theta}=\displaystyle\bigoplus_{j=1}^{l-1}( \fg_{\epsilon_{j}-\epsilon_{l}}\oplus \fg_{\epsilon_{j}+\epsilon_{l}})^{-\theta}\oplus(\fg_{-(\epsilon_{j}-\epsilon_{l})}\oplus \fg_{-(\epsilon_{j}+\epsilon_{l})})^{-\theta} .$$
Let $e_{\pm j}$ be a basis for $( \fg_{\pm (\epsilon_{j}-\epsilon_{l})}\oplus \fg_{\pm(\epsilon_{j}+\epsilon_{l})})^{-\theta}$ respectively.  
%$(\fg_{\epsilon_{j}-\epsilon_{l-1}}\oplus \fg_{\epsilon_{j}+\epsilon_{l}})_{-\Delta}\subset \C\oplus \C$ denotes the anti-diagonal.  
%(MAYBE THIS IS NOT THE BEST WAY TO SAY IT.  
Consider elements of the form 
\begin{equation}\label{eq:XirootspaceD}
\underline{a}\oplus_{j=1}^{l-1} u_{j}e_{j}\oplus_{j=1}^{l-1} v_{j} e_{-j},
\end{equation}
where $\underline{a}=\mbox{diag}[a_{1},\dots, a_{l}, -a_{l}, \dots, -a_{1}],$
$a_{i}\neq \pm a_{j}$, $a_{i}\neq 0$ for $i, \, j\leq l-1$,
and $u_{j},\, v_{j}\in \C$.  Arguing as in the previous case, we see $\Xi$ consists 
of elements of the form (\ref{eq:XirootspaceD}) satisfying
  %and $\pm a_{k}, \, k=1,\dots , i$ are eigenvalues of $x$. 
\begin{equation}\label{eq:conditionsD}
u_{j}v_{j}=0 \mbox{ for } j=1,\dots, i.  
\end{equation}
We define the varieties $\Xi_{j_{1},\dots, j_{i}}$ with
$j_{k}=L,\, U$ analogously to (\ref{eq:UorL}).  We have 
$\Xi=\bigcup_{j_{k}=L, U} \Xi_{j_{1},\dots, j_{i}}$ (cf. (\ref{eq:upsunion})).  
Now we observe that if $j_{k}=U$ for all $k$, 
then 
\begin{equation}\label{eq:isinfrD}
\Xi_{U,\dots, U}\subset\fr.
\end{equation}
This follows from the observation that 
$\epsilon_{j}-\epsilon_{l}=\alpha_{j}+\dots+\alpha_{l-1}$ 
and $\epsilon_{j}+\epsilon_{l}=\alpha_{j}+\dots +\alpha_{l-2}+\alpha_{l}$.  
Thus, for $j=i+1,\dots, l$, the root spaces $\fg_{\pm(\epsilon_{j}-\epsilon_{l})}$ and $\fg_{\pm(\epsilon_{j}+\epsilon_{l})}$ are in $\fl_{ss}\subset\fr$. Further,  
 for $j=1,\dots i$,  the root spaces 
$\fg_{\epsilon_{j}-\epsilon_{l}}$ and $\fg_{\epsilon_{j}+\epsilon_{l}}\subset\fu\subset\fr$ (see (\ref{eq:Levi1})).
We now show that any $\Xi_{j_{1},\dots, j_{i}}\subset Y_{Q_{\fr}}$.  
Recall from part (2) of Proposition \ref{prop_weylk} that
$$W^{\theta}=W_{K}=\langle s_{\alpha_{1}},\dots, s_{\alpha_{l-2}}, s_{\alpha_{l-1}}\cdot s_{\alpha_{l}}\rangle.$$
For $j=1,\dots, i$, define 
$w_{j}:= s_{\epsilon_{j}-\epsilon_{l-1}} s_{\alpha_{l-1}} s_{\alpha_{l}} s_{\epsilon_{j}-\epsilon_{l-1}}$.  
Then $w_{j}\in W_{K}$, and $w_{j}$ has order $2$.  In fact, $w_{j}$ acts on $\fh$ via
\begin{equation}\label{eq:actionwj}
w_{j}: (a_{1},\dots, a_{j}, \dots, a_{l})\to (a_{1},\dots, -a_{j},\dots, -a_{l}).  
\end{equation}
We claim that
\begin{equation}\label{eq:LtoUD}
\Ad(\dot{w}_{j})\Xi_{j_{1},\dots, j_{k-1}, L, \dots, j_{i}}\subset \Xi_{j_{1},\dots, U,\dots, j_{i}}. 
\end{equation}
Indeed, (\ref{eq:actionwj}) implies that 
$$w_{j}\cdot(\epsilon_{j}+\epsilon_{l})=-(\epsilon_{j}+\epsilon_{l}),\, w_{j}\cdot(\epsilon_{j}-\epsilon_{l})=-(\epsilon_{j}-\epsilon_{l}), \mbox{ and }
w_{j}\cdot(\epsilon_{k}+\epsilon_{l})=\epsilon_{k}-\epsilon_{l} \mbox{ for } k\neq j.$$
Further, since $w_{j}\in W_{K}$, 
$$
\Ad(\dot{w}_{j}): (\fg_{\pm (\epsilon_{j}-\epsilon_{l})}\oplus\fg_{\pm (\epsilon_{j}+\epsilon_{l})} )^{-\theta}\mapsto(\fg_{\mp(\epsilon_{j}-\epsilon_{l})}\oplus\fg_{\mp(\epsilon_{j}+\epsilon_{l})})^{-\theta}
$$
and  $\Ad(\dot{w}_{j})$ stabilizes the space $(\fg_{\pm(\epsilon_{k}-\epsilon_{l})}\oplus\fg_{\pm(\epsilon_{k}+\epsilon_{l})})^{-\theta}\,$
for $k \neq j$.
Equation (\ref{eq:LtoUD}) now follows from the definition of
the varieties $\Xi_{j_{1},\dots, j_{i}}$.  Thus, if we are given a variety $\Xi_{j_{1},\dots, j_{i}}$ with $j_{k_{1}}=j_{k_{2}}=\dots=j_{k_{m-1}}=L$, 
it follows from (\ref{eq:LtoUD}) and (\ref{eq:isinfrD}) that 
$$
\Ad(\dot{w}_{k_{1}}\dots \dot{w}_{k_{m-1}})\Xi_{j_{1},\dots, j_{i}}\subset\fr.
$$
Since $\Xi=\bigcup_{j_{k}=L, U} \Xi_{j_{1},\dots, j_{i}}$, Equation (\ref{eq:XicontainmentD}) follows.  
This completes the proof.
\end{proof}

\begin{proof}[Proof of Theorem \ref{thm:bigthm}]
Equation (\ref{eq:king}) follows from Theorems \ref{thm:YQs2} and \ref{thm:YQs1} along with Theorem 
\ref{thm:exhaustion}.  The statement about the number of irreducible components of $\fg(\geq i)$ follows 
from Parts 2 and 3 of Propositions \ref{prop_sooddflag} and \ref{prop_soevenflag}.  
\end{proof}

%ONCE YOU HAVE PROVEN THIS, I WOULD INSERT THE FOLLOWING COROLLARY AND ITS PROOF.

\begin{cor}\label{c:exactcoin}
Recall the variety $\fg(i)$ defined in Equation (\ref{eq:fgl}).  
The irreducible component decomposition of $\fg(i)$ is 
\begin{equation}\label{eq:fgidecomp}
\fg(i)=\displaystyle\bigcup_{\mbox{codim}(Q)=i} Y_{Q}\cap\fg(i).
\end{equation}
\end{cor}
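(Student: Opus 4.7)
The plan is to derive the decomposition of $\fg(i)$ by intersecting the decomposition of $\fg(\ge i)$ from Theorem \ref{thm:bigthm} with the Zariski open subset $\fg(i) \subset \fg(\ge i)$, and then excising the contributions from boundary orbits via Lemma \ref{l:YQclosure}. Starting from $\fg(\ge i) = \bigcup_{\codim(Q) = i}\overline{Y_Q}$, intersecting with $\fg(i)$ yields
\begin{equation*}
\fg(i) = \bigcup_{\codim(Q) = i}\bigl(\overline{Y_Q} \cap \fg(i)\bigr).
\end{equation*}

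The first key reduction is to show that $\overline{Y_Q} \cap \fg(i) = Y_Q \cap \fg(i)$ for every $K$-orbit $Q$ with $\codim(Q) = i$. By Lemma \ref{l:YQclosure}, $\overline{Y_Q} = \bigcup_{Q' \subset \overline{Q}} Y_{Q'}$. For any orbit $Q' \subsetneq \overline{Q}$, the standard orbit closure theory gives $\codim(Q') > i$, and Theorems \ref{thm:YQs2}, \ref{thm:YQs1}, and \ref{thm:YQclosed} imply $Y_{Q'} \subset \fg(\ge \codim(Q')) \subset \fg(\ge i+1)$. Thus $Y_{Q'} \cap \fg(i) = \emptyset$ for each such boundary orbit, so only the term $Y_Q$ itself contributes, establishing (\ref{eq:fgidecomp}) as a set-theoretic equality.

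Next I will verify that each piece $Y_Q \cap \fg(i)$ is an irreducible component. Irreducibility of $Y_Q = \Ad(K)\fb$ is immediate since $Y_Q$ is the image of the irreducible variety $K \times \fb$ under the action map. Because $\fg(\ge i+1)$ is closed, and by Proposition \ref{prop:dimYQ} together with part (1) of Proposition \ref{prop_dimgl} we have $\dim Y_Q = \dim \fg - i > \dim \fg(\ge i+1)$, the intersection $Y_Q \cap \fg(i) = Y_Q \setminus \fg(\ge i+1)$ is a nonempty open, hence dense, subvariety of the irreducible set $Y_Q$. In particular it is irreducible with Zariski closure (in $\fg$) equal to $\overline{Y_Q}$, and it is closed in $\fg(i)$ since the first step identified it with $\overline{Y_Q}\cap\fg(i)$.

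Finally I will check that the pieces are distinct maximal irreducible closed subsets of $\fg(i)$. If $Y_Q \cap \fg(i) \subset Y_{Q'} \cap \fg(i)$ for two orbits of codimension $i$, taking closures in $\fg$ gives $\overline{Y_Q} \subset \overline{Y_{Q'}}$, and Theorem \ref{thm:bigthm} forces these two irreducible components of $\fg(\ge i)$ to coincide. The orbit count in Propositions \ref{prop_sooddflag} and \ref{prop_soevenflag} shows that at each codimension the assignment $Q \mapsto \overline{Y_Q}$ is injective on the relevant indexing set: trivially so in type $D$ and for $i < l$ in type $B$, while for $i = l$ in type $B$ the distinctness of $\overline{Y_{Q_+}}$ and $\overline{Y_{Q_-}}$ is already part of Theorem \ref{thm:bigthm}. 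The main potential obstacle is exactly this bookkeeping at codimension $l$ in type $B$; however, since Theorem \ref{thm:bigthm} supplies the irreducible component count, no additional argument is needed, and (\ref{eq:fgidecomp}) is therefore the irreducible component decomposition of $\fg(i)$.
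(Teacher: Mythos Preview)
Your proof is correct and follows essentially the same approach as the paper: both intersect the decomposition of $\fg(\ge i)$ from Theorem \ref{thm:bigthm} with the open set $\fg(i)$, then use Lemma \ref{l:YQclosure} together with the containment $Y_{Q'}\subset \fg(\ge \codim(Q'))$ for boundary orbits to replace $\overline{Y_Q}\cap\fg(i)$ by $Y_Q\cap\fg(i)$. Your treatment is slightly more explicit about nonemptiness, irreducibility, and distinctness of the pieces, but the argument is the same.
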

\begin{proof}
Theorem \ref{thm:bigthm} and Equation (\ref{eq:king}) imply that the irreducible component decomposition
of the variety $\fg(i)$ is 
\begin{equation}\label{eq:firstdecomp} 
\fg(i)=\displaystyle\bigcup_{\mbox{codim}(Q)=i} \overline{Y_{Q}}\cap\fg(i),
\end{equation}
By Propositions \ref{prop_dimgl} (1) and \ref{prop:dimYQ}, we have 
$\overline{Y_{Q}}\cap\fg(i)\neq\emptyset$ for all $Q$ 
with $\mbox{codim}(Q)=i$.  For each $K$-orbit $Q$ 
with $\mbox{codim}(Q)=i$, we claim that
\begin{equation}\label{eq:closures}
\overline{Y_{Q}}\cap\fg(i)=Y_{Q}\cap \fg(i).
\end{equation}
  Indeed, suppose
that (\ref{eq:closures}) were false.  Then since 
$\overline{Y_{Q}}=\bigcup_{Q^{\prime}\subset\overline{Q}} Y_{Q^{\prime}}$ 
by Lemma \ref{l:YQclosure}, there exists a $K$-orbit $Q^{\prime}$ 
with $\mbox{codim}(Q^{\prime})> \mbox{codim}(Q)$ such that $Y_{Q^{\prime}}\cap\fg(i)\neq\emptyset.$ 
But this contradicts Theorem \ref{thm:bigthm} which asserts that $Y_{Q^{\prime}}\subset\fg(\geq i+1)$.  
Equation (\ref{eq:fgidecomp}) now follows from (\ref{eq:closures}) and (\ref{eq:firstdecomp}).
 %Thus, the right-hand side of Equation \ref{eq:firstdecomp} is equal to the 
%right-hand side of (\ref{eq:fgidecomp}), proving the result. 
\end{proof}

The following corollary will be useful in Sections \ref{s:closedKorbits} and \ref{s:nilfibre}.

\begin{cor} \label{c:parabolics}
For $i=0, \dots, r_{n-1}-1$, the irreducible component decomposition of $\fg(i)$ is
\begin{equation}\label{eq:withparabolics}
\fg(i)=Y_{Q_{\fr}}\cap\fg(i),
\end{equation}
where $\fr$ is the $\theta$-stable parabolic subalgebra of Theorems \ref{thm:YQs2} and \ref{thm:YQs1}.
For $i=r_{n-1}$ and $\fg=\fso(2n,\C)$,
\begin{equation}\label{eq:oneclosed}
\fg(r_{n-1})=\fg(\geq r_{n-1})= Y_{Q_{+}},
\end{equation}
where $Q_{+}=K\cdot \fb_{+}$ is the unique closed $K$-orbit on $\B$ (see part (2) of Proposition \ref{prop_soevenflag}).
For $\fg=\fso(2n+1,\C)$ the irreducible component decomposition of $\fg(r_{n-1})$ is
\begin{equation}\label{eq:twoclosed}
\fg(r_{n-1})=\fg(\geq r_{n-1})= Y_{Q_{+}}\cup Y_{Q_{-}},
\end{equation}
where $Q_{+}$ and $Q_{-}$ are the distinct closed $K$-orbits on $\B$ (see part (2) of Proposition \ref{prop_sooddflag}).
\end{cor}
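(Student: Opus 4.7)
The plan is to deduce Corollary \ref{c:parabolics} as a bookkeeping consequence of Corollary \ref{c:exactcoin}, using the classification of $K$-orbits on $\B$ from Propositions \ref{prop_sooddflag} and \ref{prop_soevenflag} to count the orbits in each codimension and to identify the unique (or two) orbit(s) involved with the parabolic data of Theorems \ref{thm:YQs2} and \ref{thm:YQs1}.

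For the case $i < r_{n-1}$, I would first observe that there is exactly one $K$-orbit of codimension $i$ on $\B$. Indeed, when $\fg=\fso(2l+1,\C)$ we have $r_{n-1}=l$, and Proposition \ref{prop_sooddflag} lists $Q_i$ as the only orbit of codimension $i$ for $i<l$. When $\fg=\fso(2l,\C)$ we have $r_{n-1}=l-1$, and Proposition \ref{prop_soevenflag} lists $Q_{i+1}$ as the only orbit of codimension $i$ for $i<l-1$. Calling this unique orbit $Q$, Corollary \ref{c:exactcoin} reduces at once to $\fg(i)=Y_Q\cap\fg(i)$. To upgrade $Y_Q$ to $Y_{Q_{\fr}}$, I would combine the equality $\overline{Y_Q}=Y_{Q_{\fr}}$ from Theorems \ref{thm:YQs2} and \ref{thm:YQs1} with Lemma \ref{l:YQclosure}: any $x\in Y_{Q_{\fr}}\setminus Y_Q$ lies in some $Y_{Q'}$ with $Q'\subsetneq\overline{Q}$, hence $\codim(Q')>i$, so by Theorem \ref{thm:bigthm} such an $x$ already lies in $\fg(\ge i+1)$ and thus not in $\fg(i)$. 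This forces $Y_{Q_{\fr}}\cap\fg(i)=Y_Q\cap\fg(i)$, giving Equation (\ref{eq:withparabolics}).

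For the case $i=r_{n-1}$, I would first note that the convention $V^{r_{n-1},r_n}(r_{n-1}+1)=\emptyset$ from Definition \ref{ref:thenotation} yields $\fg(\ge r_{n-1}+1)=\emptyset$ and hence $\fg(r_{n-1})=\fg(\ge r_{n-1})$. Theorem \ref{thm:bigthm} then expresses this variety as the union of the closures $\overline{Y_Q}$ over orbits $Q$ of codimension $r_{n-1}$, which by the dimension count of Remark \ref{r:isBorel} are exactly the closed $K$-orbits on $\B$. For closed orbits, Theorem \ref{thm:YQclosed} ensures that $Y_Q$ itself is closed, so $\overline{Y_Q}=Y_Q$, and it only remains to read off the closed orbits from Propositions \ref{prop_soevenflag}(2) and \ref{prop_sooddflag}(2): in type $D$ there is only $Q_{+}$, giving Equation (\ref{eq:oneclosed}), while in type $B$ there are $Q_{+}$ and $Q_{-}$, giving Equation (\ref{eq:twoclosed}).

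The argument is essentially a translation of data already accumulated, so no step is technically difficult. The one point requiring genuine care is the identification of $Y_Q\cap\fg(i)$ with $Y_{Q_{\fr}}\cap\fg(i)$ when $i<r_{n-1}$; this relies on the nontrivial fact, extracted from Lemma \ref{l:YQclosure} and Theorem \ref{thm:bigthm}, that the boundary $Y_{Q_{\fr}}\setminus Y_Q$ consists entirely of strictly more degenerate eigenvalue coincidences, i.e.\ lies in $\fg(\ge i+1)$.
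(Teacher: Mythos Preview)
Your proposal is correct and follows essentially the same route as the paper. The only difference is a minor detour: you start from the final form of Corollary \ref{c:exactcoin}, namely $\fg(i)=\bigcup_{\codim(Q)=i} Y_Q\cap\fg(i)$, and then work to replace $Y_Q$ by $Y_{Q_{\fr}}$ using Lemma \ref{l:YQclosure} and Theorem \ref{thm:bigthm}. The paper instead invokes the intermediate Equation (\ref{eq:firstdecomp}), $\fg(i)=\bigcup_{\codim(Q)=i}\overline{Y_Q}\cap\fg(i)$, and then simply substitutes $\overline{Y_Q}=Y_{Q_{\fr}}$ from part (2) of Theorems \ref{thm:YQs2} and \ref{thm:YQs1}, avoiding the extra boundary analysis. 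Your argument for the boundary $Y_{Q_{\fr}}\setminus Y_Q\subset\fg(\ge i+1)$ is in fact the content of Equation (\ref{eq:closures}) already established inside Corollary \ref{c:exactcoin}, so you are reproving what the paper reuses. For the $i=r_{n-1}$ case your reasoning matches the paper's; the closedness of $Y_{Q_\pm}$ is more directly available from Remark \ref{r:closedorbitcase} than from Theorem \ref{thm:YQclosed}, but either reference suffices.
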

\begin{proof}
The result follows immediately from Equation (\ref{eq:firstdecomp}) and part (2) of Theorems \ref{thm:YQs2} and \ref{thm:YQs1}. 

\end{proof}

\section{The geometric invariant theory of multiplicity free spherical pairs}\label{ss:git}

In this section, we study the $K$-action on $\fg$ in the cases
$(K,\fg)=(GL(n-1,\C),\fgl(n,\C))$ and $(SO(n-1,\C),\fso(n,\C)).$  We extend a result of Kostant characterizing
regular elements using differentials in Theorem \ref{thm:Kostant}.
We then analyze the $K$-action on the subvariety $\fg(0)$, and show that
all the $K$-orbits in $\fg(0)$ are closed.    We use the above
analysis to give representatives of the closed $K$-orbits in $\fg$,
and discuss some applications to strongly regular elements.

%show that the $K$ on $\fg$ can be identified with the coisotropy representation 
%of a spherical pair with very special properties.  
\begin{dfn}\label{dfn:spherical}
Let $G$ be a reductive, algebraic group, and let $H\subset G$ be a
reductive algebraic subgroup.  The pair $(G, H)$ is called spherical if 
 $H$ acts on the flag variety $\B$ of $\fg$ 
with finitely many orbits. 
\end{dfn}
\begin{rem}\label{r:spherical2}
Let $V$ be a rational $G$-representation, and let $V^{H}$ be the set of $H$-fixed vectors in $V$.
It is well-known that Definition \ref{dfn:spherical} is equivalent to the statement 
that $\dim V^{H}\leq 1$ for every irreducible, rational $G$-representation $V$ (see \cite{KimelVin}, \cite{Brionclass}). 

\end{rem}

Let $(G, H)$ be a spherical pair.  Let $\fg=\mbox{Lie}(G)$ and 
let $\fh=\mbox{Lie}(H)$.  Let $\langle\langle\cdot, \cdot\rangle\rangle$ denote the Killing form on $\fg$, and let
$\fh^{\perp}$ be the annihilator of $\fh$ with respect to $\langle\langle\cdot, \cdot\rangle\rangle$.  Then the adjoint action of 
$G$ on $\fg$ restricts to an action of $H$ on $\fh^{\perp}$, which is referred to in the literature 
as the coisotropy representation of $H$ (see \cite{Pancoiso}).  Let $\C[\fh^{\perp}]^{H}$ be the 
ring of $H$-invariant polynomials on $\fh^{\perp}$.   Then it is well-known that $\C[\fh^{\perp}]^{H}$ is a polynomial algebra (Kor 7.2 of \cite{Kn} or Corollary
5 of \cite{Pancoiso}).  Consider the geometric invariant theory quotient $\Psi: \fh^{\perp}\to \fh^{\perp}//H$.  In Korollar 7.2 of \cite{Kn}, Knop proved
that $\Psi$ is flat.  
We consider spherical pairs satisfying: 
\begin{equation}\label{eq:numerology}
\dim\B=\dim \fh^{\perp}-\dim \fh^{\perp}// H.  
\end{equation}
In the appendix, we give a different and simpler proof of Knop's result
for spherical pairs satisfying (\ref{eq:numerology}) by using conormal
geometry.
%We give a different proof of this result below for certain spherical pairs $(G, H)$ using conormal bundle geometry.  

We now analyze further what the condition in Equation (\ref{eq:numerology}) means
for the coisotropy representation.  If an algebraic group $A$ acts on
an irreducible variety $Y$, we say $y\in Y$ is $A$-regular if $\dim(A\cdot y)
\ge \dim(A\cdot z)$ for all $z\in Y$.  When the group $A$ is clear, we
let $Y_{reg}$ denote its $A$-regular elements.
Recall that an element 
$x\in\fg$ is $\Ad(G)$-regular if 
$\dim(\Ad(G)\cdot x)=\dim(\fg) - \mbox{rank}(\fg).$
A basic result of Kostant (Theorem 9,\cite{Kostant63}) states that if $\C[\fg]^{G}=\C[\psi_{1},\dots, \psi_{r}]$
is the ring of $\Ad(G)$-invariant polynomials on $\fg$, then 
\begin{equation}\label{eq:regdiffs}
x\in\fg_{reg} \mbox{ if and only if } d\psi_{1}(x)\wedge\dots\wedge d\psi_{r}(x)\neq 0. 
\end{equation}
 If $x\in\fg_{reg}$, and we identify $T_{x}^{*}(\fg)$ with $\fg$ using 
 the non-degenerate form on $\fg$, then 
 \begin{equation}\label{eq:centralizer}
 \mbox{span}\{ d\psi_{i}(x):\, i=1\,\dots, r\}=\fz_{\fg}(x), 
 \end{equation}
 where $\fz_{\fg}(x)$ denotes the centralizer of $x$ in $\fg$.  
We study the set of $H$-regular elements:
\begin{equation}\label{eq:Hreg}
\fh_{reg}^{\perp}=\{x\in \fh^{\perp}: \dim H\cdot x \mbox{ is maximal}\}.
\end{equation}
The following result relates the sets $\fh^{\perp}_{reg}$ and $\fg_{reg}$.  
%The condition in (\ref{eq:numerology}) implies the following surprising 
%result about the set $\fh^{\perp}_{reg}.$  

\begin{thm}\label{thm:regelts}
Let $(G, H)$ be a spherical pair.  Then the following conditions are equivalent.
\begin{enumerate}
\item Equation (\ref{eq:numerology}) holds.
\item We have $\fh^{\perp}_{reg}\subset \fg_{reg}$. 
\end{enumerate}
\end{thm}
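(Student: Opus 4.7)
The plan is to combine a universal dimension inequality for $H$-orbits on $\fh^\perp$ with the coisotropy reformulation of sphericity. Throughout I use $\dim H\cdot x=\dim\fh-\dim\fz_\fh(x)$ and $\dim G\cdot x=\dim\fg-\dim\fz_\fg(x)$, together with the non-degeneracy of the Killing form on $\fh$ (so that $\fg=\fh\oplus\fh^\perp$).

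First I would establish the inclusion $[\fh,x]\subset \fh^\perp\cap[\fg,x]$ for any $x\in\fh^\perp$, which is immediate from Killing-form invariance: for $Y,Z\in\fh$, $\langle\langle [Y,x],Z\rangle\rangle=-\langle\langle x,[Y,Z]\rangle\rangle=0$ since $[Y,Z]\in\fh$ and $x\in\fh^\perp$. Using the standard identity $[\fg,x]=\fz_\fg(x)^\perp$ and taking orthogonal complements, $\fh^\perp\cap[\fg,x]=(\fh+\fz_\fg(x))^\perp$, and a routine dimension count using $\fh\cap\fz_\fg(x)=\fz_\fh(x)$ yields
\begin{equation*}
\dim(\fh^\perp\cap[\fg,x])=\dim G\cdot x-\dim H\cdot x.
\end{equation*}
Combined with the inclusion, this forces $2\dim H\cdot x\leq \dim G\cdot x\leq \dim\fg-r$, so $\dim H\cdot x\leq\dim\B$ for every $x\in\fh^\perp$; equality throughout the chain is equivalent to the conjunction $x\in\fg_{reg}$ and $[\fh,x]=\fh^\perp\cap[\fg,x]$.

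Since the generic $H$-orbit dimension is $d_H:=\dim\fh^\perp-\dim(\fh^\perp//H)$, condition (1) is equivalent to $d_H=\dim\B$. The implication (1)$\Rightarrow$(2) is then immediate: for $x\in\fh^\perp_{reg}$, $\dim H\cdot x=d_H=\dim\B$ collapses the inequality chain and in particular forces $\dim G\cdot x=\dim\fg-r$, so $x\in\fg_{reg}$.

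The converse (2)$\Rightarrow$(1) is the main obstacle, because the hypothesis $\fh^\perp_{reg}\subset\fg_{reg}$ alone does not force the equality $[\fh,x]=\fh^\perp\cap[\fg,x]$ to hold at any $x$. Here I would invoke the coisotropy reformulation of sphericity due to Knop and Panyushev: on the coadjoint orbit $G\cdot x\subset\fg$ equipped with the Kirillov--Kostant--Souriau symplectic form, a direct computation using $\omega_x([Y,x],[Z,x])=\langle\langle x,[Y,Z]\rangle\rangle$ identifies the symplectic-orthogonal complement of $T_x(H\cdot x)=[\fh,x]$ inside $T_x(G\cdot x)=[\fg,x]$ with exactly $\fh^\perp\cap[\fg,x]$. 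Sphericity of $(G,H)$ is equivalent to Poisson commutativity of $\C[T^*(G/H)]^G\cong \C[\fh^\perp]^H$, which means that generic $H$-orbits in $G\cdot x$ are coisotropic submanifolds; this gives the reverse inclusion $\fh^\perp\cap[\fg,x]\subset[\fh,x]$ on a nonempty open subset $U\subset\fh^\perp$. Since $\fh^\perp_{reg}$ is open and dense, $U\cap\fh^\perp_{reg}$ is nonempty; hypothesis (2) places any such element in $\fg_{reg}$, the inequality chain collapses, and $d_H=\dim H\cdot x=\dim\B$, establishing (1).
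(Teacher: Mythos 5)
Your overall strategy runs parallel to the paper's: the universal inequality $2\dim H\cdot x\le\dim(\Ad(G)x)$ that you prove by hand via the Killing form is exactly Proposition 1 of Panyushev cited in the paper (your direct proof of it, and the identification of the symplectic perpendicular of $[\fh,x]$ inside $[\fg,x]$ with $\fh^{\perp}\cap[\fg,x]$, are both correct), and your appeal to the coisotropy/Poisson-commutativity characterization of sphericity to get $\dim(\Ad(G)x)=2\dim(H\cdot x)$ on a dense open subset of $\fh^{\perp}$ is in substance the same input as Panyushev's Theorem 3, which the paper cites at the corresponding step. So the skeleton is sound.

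There is, however, a genuine gap at the point where you write ``since the generic $H$-orbit dimension is $d_H:=\dim\fh^{\perp}-\dim(\fh^{\perp}//H)$'' and then use this identification in \emph{both} implications. This is not a general fact about reductive linear actions: for $\C^{*}$ acting on $\C^{2}$ by scalars the quotient is a point, so $\dim V-\dim V//H=2$, while every orbit has dimension at most $1$. In general Rosenlicht only gives the inequality $\dim\fh^{\perp}-\dim\fh^{\perp}//H\ge\max_{x}\dim H\cdot x$, which is the wrong direction for your argument: in (1)$\Rightarrow$(2) it does not let you conclude $\dim H\cdot x=\dim\B$ for $x\in\fh^{\perp}_{reg}$ from (1), and in (2)$\Rightarrow$(1) it does not let you pass from $\dim H\cdot x=\dim\B$ at your chosen point back to $\dim\fh^{\perp}//H=\dim\fh^{\perp}-\dim\B$. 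The equality $\dim\fh^{\perp}//H=\codim_{\fh^{\perp}}(H\cdot x)$ for $x\in\fh^{\perp}_{reg}$ is a theorem specifically about coisotropy representations of spherical pairs — it is exactly Equation (\ref{eq:zero}) in the paper, quoted from Theorems 3 and 6 and Equation (15) of \cite{Pancoiso} — and your proof needs it just as much as the paper's does. The argument becomes complete once you either cite that result or prove that generic fibres of $\Psi:\fh^{\perp}\to\fh^{\perp}//H$ contain dense $H$-orbits; as written, asserting the identification as if it were automatic leaves a hole in both directions.
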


\begin{proof}
We first show that (1) implies (2).
Let $x\in \fh^{\perp}_{reg}.$   By Theorems 3 and 6 and Equation (15) 
of \cite{Pancoiso},
\begin{equation}\label{eq:zero}  
\dim \fh^{\perp}//H=\mbox{codim}_{\fh^{\perp}} H\cdot x.
\end{equation}
  By (1), 
\begin{equation}\label{eq:first}
\dim H\cdot x=\dim \B.
\end{equation}
 By Proposition 1 of \cite{Pancoiso}, we know that 
$\dim(\Ad(G) x) \ge 2 \dim(H\cdot x)
= 2\dim(\B).$   It follows that $x\in \fg_{reg}.$
For the converse,  by Theorem 3 of \cite{Pancoiso}, there is a dense
open subset $U$ of $\fh^{\perp}_{reg}$ such that if
$y\in U$, then $\dim(\Ad(G)y)= 2\dim(H \cdot y).$
Let $x\in U \subset \fg_{reg}$.  
Then $\dim(H\cdot x)=\frac{1}{2}\dim(\Ad(G) x)
=\dim(\B).$  The assertion now follows by Theorems 3 and 6 of \cite{Pancoiso}.
\end{proof}

Let $\theta$ be an involution of $\fg$.   It is well-known that
the pair $(\fg, \fk:= \fg^{\theta})$ is spherical \cite{Mat79, Sp}.
Recall that an involution $\theta$ of $\fg$ is called quasi-split if there
is a Borel subalgebra $\fb \in \B$ such that $\fb \cap \theta(\fb)$
is a Cartan subalgebra of $\fg$.  Let $K$ be the connected subgroup of
$G$ with Lie algebra $\fk$.  Let $\fp:=\fg^{-\theta}\cong \fk^{\perp}$.
Let $\fh$ be a $\theta$-stable Cartan subalgebra of $\fg$ such that
$\dim(\fh^{-\theta})$ is maximal among all $\theta$-stable Cartan
subalgebras of $\fg$.  We let $\fa=\fh^{-\theta}$, and let $\fm=\fz_{\fk}(\fa)$.
We let $\Phi_{c}$ be the compact roots for $\fh$ in $\fg$.

\begin{prop}\label{p:symmnum}
Let $\theta$ be an involution of $\fg$.  Then the  spherical
pair $(\fg,\fk)$ satisfies Equation (\ref{eq:numerology}) if and only
if $\theta$ is quasi-split.
\end{prop}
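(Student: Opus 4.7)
Proof proposal. The strategy is to reduce Equation (\ref{eq:numerology}) to the Lie-algebraic identity $\fm=\ft$, and then to identify this identity with quasi-splitness via a root-theoretic calculation.

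By the Killing form, $\fk^{\perp}\cong\fp$ as $K$-modules. The Kostant-Rallis theorem identifies $\C[\fp]^{K}$ with the invariants of a finite Weyl group acting on $\fa$, so $\dim\fp//K=\dim\fa$, and hence (\ref{eq:numerology}) is equivalent to
\[
\dim\B=\dim\fp-\dim\fa.
\]
For $x\in\fa$ regular, the $K$-stabilizer has Lie algebra $\fm$, so by Kostant-Rallis the generic $K$-orbit in $\fp$ has dimension $\dim\fp-\dim\fa=\dim\fk-\dim\fm$. Combining this identity with $\dim\B=(\dim\fg-\dim\fh)/2$, $\dim\fg=\dim\fk+\dim\fp$, and $\dim\fh=\dim\ft+\dim\fa$, an elementary dimension count reduces (\ref{eq:numerology}) to $\dim\fm=\dim\ft$. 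Since $\ft\subseteq\fm$, this is equivalent to $\fm=\ft$.

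I would then prove that $\fm=\ft$ is equivalent to the absence of imaginary roots of $\fh$ in $\fg$. Since $\fa$ is maximal abelian in $\fp$, $\fz_{\fp}(\fa)=\fa$, so the $\theta$-decomposition gives $\fz_{\fg}(\fa)=\fm\oplus\fa$. On the other hand, $\fz_{\fg}(\fa)=\fh\oplus\bigoplus_{\alpha|_{\fa}=0}\fg_{\alpha}$, and $\alpha|_{\fa}=0$ is equivalent to $\theta(\alpha)=\alpha$, i.e., $\alpha$ is imaginary. Hence $\fm=\ft$ if and only if $\fh$ admits no imaginary roots.

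Finally, I would show that $\fh$ has no imaginary roots if and only if $\theta$ is quasi-split. For the forward direction: let $\fh_{\R}$ be the real span of the coroots of $\fg$ in $\fh$; since $\theta$ permutes roots, $\theta(\fh_{\R})=\fh_{\R}$, and $\fa_{\R}:=\fa\cap\fh_{\R}$ is a real form of $\fa$. Choose a generic $y\in\fa_{\R}$, so that $\alpha(y)$ is a nonzero real number for every root. Setting $\Phi^{+}=\{\alpha:\alpha(y)>0\}$ defines a positive system, and since $\theta(y)=-y$ we obtain $\theta(\Phi^{+})=\Phi^{-}$; the corresponding Borel $\fb\supset\fh$ then satisfies $\fb\cap\theta(\fb)=\fh$. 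For the converse, suppose $\fb\cap\theta(\fb)=\fh'$ is a Cartan; then no positive root of $\fh'$ in $\fb$ has its $\theta$-image again positive, which rules out all imaginary roots of $\fh'$. Since $\fa':=(\fh')^{-\theta}$ thus contains a regular element of $\fg$, $\fh'$ is maximally split, and hence $K$-conjugate to the $\fh$ selected in the statement of the proposition.

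The most delicate step will be the forward direction of the last equivalence, where a careful setup of the real structure on $\fh$ is required in order to choose $\Phi^{+}$ compatibly with $\theta$; the underlying combinatorics is standard, but care is needed to distinguish between the complex Cartan $\fh$, its real form $\fh_{\R}$, and the complex-linear involution $\theta|_{\fh}$.
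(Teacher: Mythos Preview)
Your proof is correct and complete, but it follows a genuinely different route from the paper's.

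The paper rewrites (\ref{eq:numerology}) as the equality $\dim K_{\fb}=\dim K_{x}$, where $K_{\fb}$ is the isotropy group of a point in the open $K$-orbit on $\B$ and $x\in\fp$ is $K$-regular. It then invokes Kostant--Rallis for $\dim K_{x}=\dim\fm$, the formula $\dim\fm=\dim\fh^{\theta}+|\Phi_{c}|$ from Knapp, and the Brion--Helminck formula $\dim K_{\fb}=\dim\fh^{\theta}+\tfrac{1}{2}|\Phi_{c}|$; equating the two forces $\Phi_{c}=\emptyset$, and Lemma~8.3 of Richardson--Springer is cited for the equivalence of $\Phi_{c}=\emptyset$ with quasi-splitness.

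Your argument bypasses the Brion--Helminck formula entirely: you use only $\dim\fp//K=\dim\fa$ and the elementary identity $\dim\B=\tfrac{1}{2}(\dim\fg-\dim\fh)$ to reduce (\ref{eq:numerology}) to $\fm=\ft$, and then compute $\fm$ directly from the root-space decomposition of $\fz_{\fg}(\fa)$. Your condition ``no imaginary roots for $\fh$'' coincides with the paper's ``$\Phi_{c}=\emptyset$'' because, as your argument implicitly shows (via $\fz_{\fp}(\fa)=\fa$), a maximally split Cartan has no noncompact imaginary roots. You also supply a self-contained proof of the final equivalence with quasi-splitness, whereas the paper cites \cite{RS}. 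The trade-off: your proof is more elementary and self-contained, while the paper's is shorter given the cited structural results on $K$-orbits.
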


\begin{proof}  Let $x \in \fp$ be
$K$-regular.  By Equation
(\ref{eq:zero}), we know $\dim(\fp)-\dim(\fp//K)=\dim(K) - \dim(K_x)$.
    If $K\cdot \fb$ denotes the open $K$-orbit on $\B$ and $K_{\fb}$
is the stabilizer of $\fb$, then
 $\dim(\B)=\dim(K)-\dim(K_{\fb})$, so Equation (\ref{eq:numerology}) holds
if and only if $\dim(K_{\fb})=\dim(K_x)$. 
   By Proposition 8 of \cite{KR}, $\dim(K_x)
=\dim(\fm)$.  By Proposition 6.70 and page 394 of \cite{Knapp02},
it follows that $\dim(\fm)=\dim(\fh^{\theta}) + |\Phi_{c}|$.
By Corollary 2 of \cite{BH},
we know $\dim(K_{\fb})=\dim(\fh^{\theta}) + \frac{1}{2}|\Phi_{c}|$.
It follows that Equation (\ref{eq:numerology}) is equivalent
to the assertion that $\Phi_{c}$ is empty, which happens
if and only if $\theta$ is quasi-split by Lemma 8.3 of
\cite{RS}.
\end{proof}

%Let $\fh$ be a $\theta$-stable Cartan subalgebra with the property that $\fh^{-\theta}$
%has maximal dimension.   It is well-known that $\fh$ is a Cartan subalgebra of a
%Borel subalgebra $\fb$ in the unique open $K$-orbit.  For a subset $S$ of the
%simple roots of $\fb$, let $w_0^S$ be the long element of the Weyl group $W_S$
%of $S$.  For $\fb$, it follows
%by equation (4.5) of \cite{CEexp} and p. 417 of \cite{RichardsonSpringer paper
%in Geometrie Dedicata} that $\theta_{\fb}=w_0^{\Pi} \circ w_0 \circ \theta_0$, where
%$\theta_0$ is an involution preserving the positive roots of $\fb$, $w_0$ is the
%long element of the Weyl group, and $\Pi = \{ \alpha : \theta_{\fb}(\alpha)=\alpha \}.$
%Now assume that $\theta$ is not quasi-split for $G$.    Then by page 419 of
%\cite{RichardsonSpringer, cited above}, the set $\Pi$ is nonempty.  It follows
%easily that if $\alpha \in \Pi$, then $\alpha(X)=0$ for all $X \in \fh^{-\theta}$, and
%thus $\fh^{-\theta}$ contains no regular elements.

\subsection{Kostant's Theorem for the $K$-action on $\fg$}
We now apply Theorem \ref{thm:regelts} to study the $K$-action on 
$\fg$ in the cases where $(K, \fg)=(GL(n-1,\C), \fgl(n,\C))$ or
$(SO(n-1,\C), \fso(n,\C)).$
  For $\fg=\fgl(n,\C)$, we view $\fk=\fgl(n-1,\C)$ as the top left hand corner of 
$\fg$.  Then $K$ is the corresponding algebraic subgroup of $G=GL(n,\C)$.  

\begin{nota}\label{nota:glcase}
The following extends notation from the $\fso(n,\C)$ case to the
$\fgl(n,\C)$ case.  We continue
to use the notation $r_{i}=\mbox{rank}(\fg_{i})$, so that
$r_{i}=i$ for $\fg_{i}=\fgl(i,\C)$.   The partial Kostant-Wallach map
$\Phi_{n}:\fgl(n,\C) \to \C^{n-1} \times \C^{n}$ is $\Phi_{n}(x)=(f_{i,j}(x))_{i=n-1, n; j=1, \dots, i}$, where $\C[\fg_{i}]^{G_{i}}=\C[f_{i,1}, \dots, f_{i,i}]$.
Proposition \ref{prop:flat} also holds in this case as was noted in Remark \ref{r:glflat}, and $\Phi_{n}$ is identified with the quotient morphism
$\fg \to \fg//K$.  
The varieties $\fg(\geq i), \,\fg(i), \, V^{r_{n-1}, r_{n}}(\geq i), \mbox{ and } V^{r_{n-1}, r_{n}}(i)$ are all defined
analogously.  See Section 3 of \cite{CEeigen} for details. 
 \end{nota}

To apply Theorem \ref{thm:regelts} and the theory of spherical varieties to this situation, we consider the following setup.  
Let $G$ be a connected, reductive algebraic group, let $K$ be a reductive, connected algebraic subgroup, and let $\fg$ 
and $\fk$ be their Lie algebras.  We say that the branching from $G$ to $K$ is multiplicity free if 
for every irreducible, finite dimensional rational $G$-representation $V$, and every 
irreducible, finite dimensional rational $K$-representation $W$ we have 
$\dim \mbox{Hom}_{K}(W, V)\leq 1$.  
Now let $\tilde{G}=G\times K$ and $K_{\Delta}\subset \tilde{G}$ be the diagonal copy
of $K$ in $G\times K$, i.e.
$$
K_{\Delta}:=\{(g, g): g\in K\}.  
$$
Consider the pair 
$(\tilde{G}, K_{\Delta})$ with Lie algebra pair $(\tilde{\fg}, \fk_{\Delta})$.
The following result is well-known.
\begin{prop}\label{prop:isspherical}
\begin{enumerate}
\item The pair $(\tilde{G}, K_{\Delta})$ is spherical 
if and only if the branching rule from $G$ to $K$ is multiplicity free. 
\item For the pairs $(G,K)=(GL(n,\C), GL(n-1,\C))$ and $(SO(n,\C), SO(n-1,\C))$,
$(\tilde{G}, K_{\Delta})$ is spherical.
\end{enumerate}
\end{prop}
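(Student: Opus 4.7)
The plan is to prove part (1) using the intrinsic characterization of sphericity recalled in Remark \ref{r:spherical2}, and part (2) by invoking classical branching theorems.

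For part (1), I would start by noting that every irreducible rational representation of the product group $\tilde{G}=G\times K$ has the form $V_1\otimes V_2$, where $V_1$ is an irreducible rational $G$-representation and $V_2$ is an irreducible rational $K$-representation. The key computation is identifying the space of $K_\Delta$-invariants:
\begin{equation*}
(V_1\otimes V_2)^{K_\Delta} \;\cong\; \Hom_K(V_2^*, V_1|_K).
\end{equation*}
Indeed, under the identification $V_1\otimes V_2\cong \Hom(V_2^*, V_1)$, the $K_\Delta$-action is given by simultaneous conjugation by $K$ on source and target, and the fixed points are precisely the $K$-equivariant homomorphisms. By Remark \ref{r:spherical2}, $(\tilde{G},K_\Delta)$ is spherical iff $\dim(V_1\otimes V_2)^{K_\Delta}\leq 1$ for all choices of $V_1,V_2$. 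Since every irreducible $K$-representation arises as some $V_2^*$, this last condition is exactly the statement that $\dim\Hom_K(W,V_1|_K)\leq 1$ for every irreducible $K$-representation $W$ and every irreducible $G$-representation $V_1$, i.e.\ that the branching rule from $G$ to $K$ is multiplicity free.

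For part (2), by part (1) it suffices to verify that the branching rule from $G$ to $K$ is multiplicity free in each of the two cases. In the case $(G,K)=(GL(n,\C),GL(n-1,\C))$, this is the classical Gelfand–Zeitlin branching rule: an irreducible polynomial representation of $GL(n,\C)$ with highest weight $\lambda=(\lambda_1\ge\dots\ge\lambda_n)$ decomposes under $GL(n-1,\C)$ as a multiplicity-free sum over highest weights $\mu=(\mu_1\ge\dots\ge\mu_{n-1})$ satisfying the interlacing inequalities $\lambda_i\ge\mu_i\ge\lambda_{i+1}$. (The general case of arbitrary rational representations reduces to this by tensoring with a power of the determinant.) In the case $(G,K)=(SO(n,\C),SO(n-1,\C))$, the analogous classical result (due to Zhelobenko, and also attributed to Weyl) gives a multiplicity-free branching governed by an interlacing rule on highest weights in the standard $\epsilon$-coordinates.

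I do not expect a serious obstacle in this proposal: part (1) is a direct application of Schur–Weyl duality style reasoning together with the definition of sphericity, and part (2) is a citation of well-known multiplicity-free branching theorems. The only minor subtlety is to make sure that the characterization in Remark \ref{r:spherical2} is applied to the full class of rational (not merely algebraic or polynomial) irreducible representations, which is automatic since $\tilde{G}$ is reductive. No additional lemmas beyond Remark \ref{r:spherical2} and standard representation theory are needed.
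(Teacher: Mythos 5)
Your proposal is correct, but for part (1) it takes a genuinely different route from the paper. The paper does not argue through representations directly: it cites Theorem B of \cite{brundanspher}, which characterizes multiplicity-free branching from $G$ to $K$ by the condition that a Borel subgroup $B_K$ of $K$ has an open orbit on $G/B$, and then adds the easy observation that $B_K$ has an open orbit on $G/B$ if and only if $K_{\Delta}$ has an open orbit on $G/B\times K/B_K$, i.e.\ if and only if $(\tilde{G},K_{\Delta})$ is spherical. You instead prove the equivalence directly from the invariant-theoretic criterion of Remark \ref{r:spherical2}: every irreducible rational $\tilde{G}$-module is an outer tensor product $V_1\otimes V_2$, and the identification $(V_1\otimes V_2)^{K_{\Delta}}\cong \Hom_K(V_2^{*},V_1|_K)$ translates ``all invariant spaces of dimension $\le 1$'' into ``multiplicity-free branching,'' since $V_2\mapsto V_2^{*}$ runs over all irreducible $K$-modules. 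Both arguments are sound; yours is more self-contained (modulo the well-known equivalence recorded in Remark \ref{r:spherical2}, which itself rests on the Vinberg--Kimelfeld criterion), while the paper's is shorter by citation and works at the level of open Borel orbits on flag varieties rather than representations. For part (2) you and the paper do essentially the same thing: reduce to part (1) and quote the classical multiplicity-free branching laws for $(GL(n,\C),GL(n-1,\C))$ and $(SO(n,\C),SO(n-1,\C))$ (the paper points to \cite{Johnson}), with your reduction of rational $GL(n)$-representations to polynomial ones by twisting with a power of the determinant being a standard and correct detail.
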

\begin{proof}
The first statement follows by Theorem B of \cite{brundanspher}, together
with the easy observaton that a Borel subgroup $B_K$ of $K$ has
an open orbit on the flag variety $G/B$ of $G$ if and only if
$K_{\Delta}$ has an open orbit on $G/B \times K/B_K$.   The
second statement follows from the first statement and well-known
branching laws (see \cite{Johnson}).
\end{proof}
 A spherical pair $(\tilde{G}, K_{\Delta})$ satisfying the above property
is called a \emph{multiplicity free spherical pair}.  A result of Knop
shows that up to isogeny, these are essentially the only two multiplicity
free spherical pairs \cite{Knopenv}.
In the sequel, unless otherwise specified, we assume that
that $(\fg, \fk)=(\fgl(n,\C), \fgl(n-1,\C))$ or $(\fso(n,\C), \fso(n-1,\C)).$ 

It is easy to see that the restriction of $\langle\langle \cdot, \cdot\rangle\rangle$ to 
$\fk$ is non-degenerate.  Equip $\tilde{\fg}=\mbox{Lie}(\tilde{G})=\fg\oplus\fk$ 
with the non-degenerate invariant form $\langle \cdot, \cdot\rangle=\langle\langle \cdot, \cdot\rangle\rangle+(\langle\langle \cdot, \cdot\rangle\rangle)|_{\fk}$.  For $x\in \fg$, let $x=x_{\fk}+x_{\fp}$ with $x_{\fk} \in \fk$ and
$x_{\fp} \in \fk^{\perp}$.
An easy calculation shows that
$$
\fk_{\Delta}^{\perp}=\{(x, -x_{\fk}): x\in\fg, \, x_{\fk}\in \fk\}.
$$
  Note that $\fk_{\Delta}^{\perp}\cong \fg$ via the map $(x,-x_{\fk})\mapsto x$.
This isomorphism intertwines the coisotropy representation of $K_{\Delta}$ on $\fk_{\Delta}^{\perp}$ 
with the action of $K$ on $\fg$ via conjugation.  We can now use the geometry of spherical varieties, in particular
Theorem \ref{thm:regelts}, to study the geometry of the $K$-conjugation on $\fg$ and the partial Kostant-Wallach map 
$\Phi_{n}$ (see (\ref{eq:partial})).  

\begin{lem} \label{lem:dimest} Consider the multiplicity-free spherical pairs 
$(\tilde{G}, K_{\Delta})$. 
\begin{enumerate} \item  Equation (\ref{eq:numerology}) holds.
\item  $\dim(K)=\dim(K\cdot x)$ if and only if $x\in (\fk_{\Delta}^{\perp})_{reg}$.
\item \begin{equation}\label{eq:Kreg}
(\fk_{\Delta}^{\perp})_{reg}\cong\{x\in\fg: \fz_{\fk}(x_{\fk})\cap\fz_{\fg}(x)=0\}.
\end{equation} 
\end{enumerate}
\end{lem}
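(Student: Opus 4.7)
The proof breaks into three steps.

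For (1), the strategy is a dimension count. By Proposition \ref{prop:flat}, $\C[\fg]^{K} = \C[\fg]^{G} \otimes \C[\fk]^{K}$ is polynomial in $r_{n} + r_{n-1}$ generators. Under the $K$-equivariant identification $\fk_{\Delta}^{\perp} \cong \fg$ given by $(x, -x_{\fk}) \mapsto x$, this yields $\dim(\fk_{\Delta}^{\perp}//K_{\Delta}) = r_{n} + r_{n-1}$ and $\dim \fk_{\Delta}^{\perp} = \dim \fg$. Since $\tilde{G} = G \times K$, the flag variety decomposes as $\B_{\tilde G} = \B_G \times \B_K$, so $\dim \B_{\tilde G} = \tfrac{1}{2}(\dim \fg + \dim \fk - r_{n} - r_{n-1})$. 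Thus Equation (\ref{eq:numerology}) reduces to the identity $\dim \fg = \dim \fk + r_{n} + r_{n-1}$, which I will verify case by case for $(GL(n,\C),GL(n-1,\C))$, $(SO(2l+1,\C),SO(2l,\C))$, and $(SO(2l,\C),SO(2l-1,\C))$ using standard dimension formulas for classical Lie algebras.

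Granted (1), part (2) follows by combining it with the generic-orbit dimension formula for spherical pairs. Under the identification $\fk_{\Delta}^{\perp} \cong \fg$ above, the coisotropy action of $K_\Delta$ on $\fk_{\Delta}^{\perp}$ coincides with the conjugation action of $K$ on $\fg$. By Equation (\ref{eq:zero}) (or equivalently, by flatness of $\Phi_n$ from Proposition \ref{prop:flat} together with the fact that generic fibres of a reductive quotient are single orbits), the maximal dimension of a $K$-orbit on $\fg$ equals $\dim \fk_{\Delta}^{\perp} - \dim(\fk_{\Delta}^{\perp}//K_{\Delta}) = \dim \fg - r_n - r_{n-1}$, which by (1) equals $\dim \fk = \dim K$. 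Hence $x \in (\fk_{\Delta}^{\perp})_{reg}$ if and only if $\dim(K \cdot x) = \dim K$.

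For (3), I will compute $\fz_{\fk}(x)$ explicitly using the decomposition $x = x_{\fk} + x_{\fp}$ with $x_{\fp} \in \fp := \fk^{\perp}$. In the orthogonal case $\fp$ is the $(-1)$-eigenspace of $\theta$; in the $\fgl$ case a short direct computation shows $[\fk,\fp] \subset \fp$ as well, so in both cases $\fp$ is $K$-stable. Thus for $Y \in \fk$ the decomposition $[Y, x] = [Y, x_{\fk}] + [Y, x_{\fp}] \in \fk \oplus \fp$ forces $[Y,x] = 0$ iff $[Y, x_{\fk}] = 0$ and $[Y, x_{\fp}] = 0$. Hence $\fz_{\fk}(x) = \fz_{\fk}(x_{\fk}) \cap \fz_{\fg}(x_{\fp})$, and since any $Y \in \fz_{\fk}(x_{\fk})$ satisfies $[Y, x_{\fp}] = [Y, x]$, this equals $\fz_{\fk}(x_{\fk}) \cap \fz_{\fg}(x)$. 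Because $\dim(K \cdot x) = \dim K - \dim \fz_{\fk}(x)$, part (2) converts the condition $\dim(K \cdot x) = \dim K$ into $\fz_{\fk}(x_{\fk}) \cap \fz_{\fg}(x) = 0$, as required. The only substantive computation in the whole proof is the numerical identity in (1); the rest is essentially formal given Proposition \ref{prop:flat} and Theorem \ref{thm:regelts}.
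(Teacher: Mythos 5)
Your proof is correct and follows essentially the same route as the paper: part (1) reduces to the routine identity $\dim\fg=\dim\fk+r_{n}+r_{n-1}$ (the paper's Equations (\ref{eq:flagvarieties}) and (\ref{eq:quotdim})), part (2) follows from the generic-orbit codimension formula of Equation (\ref{eq:zero}) combined with (1), and part (3) is the same stabilizer computation, which the paper phrases as the vanishing of $\fz_{\fk}(x_{\fk})\cap\fz_{\fk}(x)$ for the pair $(x,-x_{\fk})$ while you phrase it via $\fz_{\fk}(x)=\fz_{\fk}(x_{\fk})\cap\fz_{\fg}(x)$ using the $K$-stability of $\fk^{\perp}$. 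The one caution is your parenthetical alternative in (2): the claim that generic fibres of a reductive quotient are single orbits is false in general and, in this setting, is only established later (Corollary \ref{c:isgenericallyrad}) using this very lemma, so it should be dropped in favor of your primary citation of Equation (\ref{eq:zero}).
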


\begin{proof}
Equation (\ref{eq:numerology}) is equivalent to the routine
identity 
\begin{equation}\label{eq:flagvarieties}
\dim(\B_{\fg}) + \dim(\B_{\fk})=\dim(\fg)-r_{n}-r_{n-1} =\dim(\fg)-\dim(\fg//K).
\end{equation}
To prove the second assertion, let 
$x\in (\fk_{\Delta})^{\perp}_{reg}$.  Since (1) holds, we can
apply Equation (\ref{eq:first}) to conclude that $\dim (K\cdot x)=\dim (\B_{\fg})+\dim (\B_{\fk}).$  
The assertion now follows from (\ref{eq:flagvarieties}) and the simple observation 
\begin{equation}\label{eq:quotdim}
\dim(\fg)-r_{n}-r_{n-1}=\dim K.
\end{equation}
The second assertion implies that $(x,-x_{\fk}) \in (\fk_{\Delta}^{\perp})_{reg}$
if and only if $\fz_{\fk}(x_{\fk})\cap\fz_{\fk}(x)=0$.   The third assertion
now follows since 
$\fz_{\fk}(x_{\fk})\cap\fz_{\fk}(x)=\fz_{\fk}(x_{\fk})\cap\fz_{\fg}(x)$.
\end{proof}

We now describe the regular elements 
of the coisotropy representation of the spherical pairs $(\tilde{G}, K_{\Delta})$, which establishes an analogue of
Kostant's theorem.   Denote the generators of $\C[\fg]^K$ 
by $\{f_{n-1,1}, \dots, f_{n-1, r_{n-1}}; f_{n, 1}, \dots,
f_{n, r_n}\}$.  Let 
$$
\omega_{\fg//K}:=df_{n-1,1} \wedge \dots \wedge df_{n-1, r_{n-1}} \wedge df_{n, 1} \wedge 
\dots \wedge df_{n, r_n} \in \Omega^{r_{n-1} + r_n}(\fg).
$$

\begin{thm}\label{thm:Kostant}
$$
x\in(\fk_{\Delta}^{\perp})_{reg}\mbox{ if and only if } 
\omega_{\fg//K}(x)\neq 0.
$$
\end{thm}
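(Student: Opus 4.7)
The plan is to convert $\omega_{\fg//K}(x)\neq 0$ into linear independence of explicit gradient vectors in $\fg$, and then to combine Kostant's classical differential criterion for regularity with a sharp dimension estimate coming from Panyushev's inequality for the coisotropy representation.

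Fix an invariant non-degenerate bilinear form on $\fg$ whose restriction to $\fk$ is non-degenerate (the trace form works for both $\fgl$ and $\fso$), and use it to identify $T^*_x\fg\cong\fg$, writing $\nabla f(x)$ for the gradient of a polynomial $f$. Set $\tilde f_{n-1,j}(x):=f_{n-1,j}(x_\fk)$. A chain-rule computation using the orthogonality of $\fk$ and $\fk^\perp$ gives $\nabla\tilde f_{n-1,j}(x)=\nabla_\fk f_{n-1,j}(x_\fk)$, which lies in $\fz_\fk(x_\fk)$, while Kostant's theorem applied to $\fg$ puts $\nabla f_{n,j}(x)\in\fz_\fg(x)$. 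Consequently $\omega_{\fg//K}(x)\neq 0$ is equivalent to linear independence in $\fg$ of the combined family
\[
\{\nabla f_{n,j}(x)\}_{j=1}^{r_n}\cup\{\nabla_\fk f_{n-1,j}(x_\fk)\}_{j=1}^{r_{n-1}}.
\]

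For the forward implication, linear independence of the full family forces linear independence of each of the two subfamilies as well as trivial intersection of their spans. Kostant's theorem applied to both $\fg$ and $\fk$ then yields $x\in\fg_{reg}$ and $x_\fk\in\fk_{reg}$ with the gradients forming bases of $\fz_\fg(x)$ and $\fz_\fk(x_\fk)$, respectively. Trivial intersection of spans gives $\fz_\fg(x)\cap\fz_\fk(x_\fk)=0$, so Lemma \ref{lem:dimest}(3) concludes $x\in(\fk_\Delta^\perp)_{reg}$.

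The converse is the main obstacle: from the single hypothesis $\fz_\fg(x)\cap\fz_\fk(x_\fk)=0$ we must extract the stronger pair of regularity conditions $x\in\fg_{reg}$ and $x_\fk\in\fk_{reg}$, after which Kostant's theorem provides the required gradient bases and linear independence follows as above. The key input is Panyushev's inequality $\dim(\tilde G\cdot y)\geq 2\dim(K_\Delta\cdot y)$ (Proposition 1 of \cite{Pancoiso}) applied at $y=(x,-x_\fk)\in\fk_\Delta^\perp$. Here $\dim(\tilde G\cdot y)=\dim(G\cdot x)+\dim(K\cdot x_\fk)$ and $\dim(K_\Delta\cdot y)=\dim(K\cdot x)=\dim K$ by Lemma \ref{lem:dimest}(2). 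Combining Panyushev's inequality with the a priori upper bounds $\dim(G\cdot x)\leq\dim G-r_n$ and $\dim(K\cdot x_\fk)\leq\dim K-r_{n-1}$ and with the numerology $\dim K=\dim\fg-r_n-r_{n-1}$ from Equation (\ref{eq:quotdim}), which precisely saturates Panyushev's inequality for the multiplicity-free spherical pair, forces equality throughout and hence $x\in\fg_{reg}$ and $x_\fk\in\fk_{reg}$.
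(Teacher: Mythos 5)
Your proof is correct and follows essentially the same route as the paper: the forward direction is identical (Kostant's criterion plus Equations (\ref{eq:regdiffs}), (\ref{eq:centralizer}), (\ref{eq:Kreg})), and your converse is the paper's argument with Theorem \ref{thm:regelts} unpacked, since saturating Panyushev's inequality $\dim(\tilde G\cdot y)\ge 2\dim(K_\Delta\cdot y)$ against the factor-wise bounds and the numerology $\dim K=\dim\fg-r_n-r_{n-1}$ is exactly how the paper (via Theorem \ref{thm:regelts} and Lemma \ref{lem:dimest}(1)) deduces that $x$ and $x_\fk$ are regular.
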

\begin{proof}
We first suppose that $\omega_{\fg//K}(x)\neq 0$.  By Equation (\ref{eq:regdiffs}), it 
follows that $x$ is regular in $\fg$ and $x_{\fk}$ is regular in $\fk$.  
Equation (\ref{eq:centralizer}) then implies that 
$\fz_{\fk}(x_{\fk})\cap\fz_{\fg}(x)=0$, so $x\in (\fk_{\Delta}^{\perp})_{reg}$ by Equation (\ref{eq:Kreg}). 

Conversely, suppose $x\in(\fk_{\Delta}^{\perp})_{reg}$.  Then by Theorem \ref{thm:regelts} and part (1) of Lemma \ref{lem:dimest}, $(x, -x_{\fk})\in\tilde{\fg}_{reg}$.  
Thus, both $x\in\fg$ and $x_{\fk}\in\fk$ are regular.  Hence by Equation (\ref{eq:regdiffs}), 
\begin{equation}\label{eq:twoindep}
df_{n-1, 1}(x_{\fk})\wedge\dots\wedge df_{n-1, r_{n-1}}(x_{\fk})\neq 0\mbox{ and } df_{n,1}(x)\wedge\dots\wedge df_{n, r_{n}}(x)\neq 0.
\end{equation}
  Since $x\in(\fk_{\Delta}^{\perp})_{reg}$, $\fz_{\fk}(x_{\fk})\cap\fz_{\fg}(x)=0$ by Equation (\ref{eq:Kreg}).  
It now follows from (\ref{eq:twoindep}) and (\ref{eq:centralizer}) that 
$\omega_{\fg//K}(x)\neq 0$. 
\end{proof}

Theorem \ref{thm:Kostant} has an immediate corollary which 
is of interest in linear algebra.
\begin{cor}\label{c:linearalgebra}
Let $x\in\fg$ and suppose that $\fz_{\fk}(x_{\fk})\cap\fz_{\fg}(x)=0$.  
Then $x\in\fg$ and $x_{\fk}\in\fk$ are both regular.
\end{cor}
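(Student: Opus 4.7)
The plan is to show that this is essentially an immediate consequence of Theorem \ref{thm:Kostant}, once the hypothesis is rephrased correctly. First I would invoke part (3) of Lemma \ref{lem:dimest}, in particular Equation (\ref{eq:Kreg}), which identifies the regular locus of the coisotropy representation as $(\fk_{\Delta}^{\perp})_{reg}\cong \{x\in\fg : \fz_{\fk}(x_{\fk})\cap\fz_{\fg}(x)=0\}$. Thus the hypothesis of the corollary is equivalent to saying that $x\in(\fk_{\Delta}^{\perp})_{reg}$.

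Next I would apply Theorem \ref{thm:Kostant} to conclude $\omega_{\fg//K}(x)\neq 0$. Since $\omega_{\fg//K}$ factors as the wedge of the $K$-invariant differentials with the $G$-invariant differentials, nonvanishing of the full wedge forces each of
\[
df_{n-1,1}(x_{\fk})\wedge\dots\wedge df_{n-1, r_{n-1}}(x_{\fk}) \qquad \text{and} \qquad df_{n,1}(x)\wedge\dots\wedge df_{n, r_{n}}(x)
\]
to be nonzero individually. By Kostant's original characterization of regular elements recalled in Equation (\ref{eq:regdiffs}), applied to $(\fk,K)$ and to $(\fg,G)$ respectively, this shows that $x_{\fk}\in\fk$ and $x\in\fg$ are both regular.

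Alternatively, one could bypass Theorem \ref{thm:Kostant} and invoke Theorem \ref{thm:regelts} directly: part (1) of Lemma \ref{lem:dimest} verifies Equation (\ref{eq:numerology}) for the multiplicity free spherical pair $(\tilde G, K_{\Delta})$, so $(\fk_{\Delta}^{\perp})_{reg}\subset \tilde{\fg}_{reg}$. This means $(x,-x_{\fk})$ is regular in $\tilde{\fg}=\fg\oplus\fk$, and regularity in a direct sum of reductive Lie algebras is equivalent to regularity of each component. I do not foresee any real obstacle here; the substantive work has already been carried out in Theorem \ref{thm:Kostant}, and the corollary is a routine consequence.
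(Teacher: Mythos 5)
Your proposal is correct and follows essentially the same route as the paper, whose proof is exactly the combination of Equation (\ref{eq:Kreg}) with Theorem \ref{thm:Kostant}; your wedge-splitting step together with Equation (\ref{eq:regdiffs}) just makes explicit what that combination yields. Your alternative route through Theorem \ref{thm:regelts} and Lemma \ref{lem:dimest}(1) is also fine, but it is not really different either: it reproduces the argument the paper already uses inside the converse direction of the proof of Theorem \ref{thm:Kostant}.
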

\begin{proof}
This follows by Equation (\ref{eq:Kreg}) and Theorem \ref{thm:Kostant}.
\end{proof}

 Elements of $(\fk_{\Delta}^{\perp})_{reg}$ regarded as elements of $\fg$ play 
 a major role in our study of the $K$-action on $\fg$, so we give them a special name. 
 \begin{dfn}\label{dfn:nsreg}
 An element $x\in\fg$ such that $\fz_{\fk}(x_{\fk})\cap \fz_{\fg}(x)=0$ is said 
 to be $n$-strongly regular.  We denote the set of $n$-strongly regular elements by $\fg_{nsreg}$.  
 \end{dfn}
 \begin{rem}\label{r:nsrfacts}
 In \cite{CEeigen}, we defined the set of $n$-strongly regular elements 
 for $\fg$ to be the set of elements $x\in\fg$ for which $\omega_{\fg//K}(x)\neq 0$.  
 It follows from Theorem \ref{thm:Kostant} and Equation (\ref{eq:Kreg})
 that our new definition is consistent with the previous one and $\fg_{nsreg}
\cong (\fk_{\Delta}^{\perp})_{reg}.$
 \end{rem}

We end this section by explaining how Corollary \ref{c:linearalgebra} can be used to simplify a crucial definition of 
Kostant and Wallach in the construction of the Gelfand-Zeitlin integrable system. 
We consider the chain of subalgebras
$$
\fg_{1}\subset \fg_{2}\subset\dots\subset\fg_{n-1}\subset \fg,
$$
where $\fg_{i}=\fgl(i,\C)$ (resp. $\fso(i,\C)$) when 
$\fg=\fgl(n,\C)$ (resp. $\fso(n,\C)$).  Let $G_{i}\subset G$ be the corresponding, connected algebraic group.  Let $\C[\fg_{i}]^{G_{i}}=\C[\psi_{i,1}, \dots, \psi_{i,r_{i}}]$ be the ring of $\Ad(G_{i})$-invariant polynomials on $\fg_{i}$, and let $\pi_{i}:\fg\to \fg_{i}$ be the projection off of $\fg_{i}^{\perp}$.  For 
$j=1,\dots, r_{i}$, define $f_{i,j}:=\pi_{i}^{*}\psi_{i,j}$.  Then the Gelfand-Zeitlin collection of functions is 
\begin{equation}\label{eq:GZfuns}
J_{GZ}:=\{f_{i,j}:\,i=1,\dots, n;\, j=1,\dots, r_{i}\}.  
\end{equation}
For $x\in\fg$, consider the subset of $T^{*}_{x}(\fg)$, 
$$
dJ_{GZ}(x):=\{df_{i,j}(x): \,i=1,\dots, n;\, j=1,\dots, r_{i}\}.
$$
The set $\fg_{sreg}$ of strongly regular elements was defined by
Kostant and Wallach to be the open set 
\begin{equation}\label{eq:sregelts}
\fg_{sreg}:=\{x\in\fg:\, dJ_{GZ}(x) \mbox{ is a linearly independent set}\}.
\end{equation}
Then $\fg_{sreg}$ is nonempty (Theorem 2.3 of \cite{KW1} for $\fgl(n,\C)$
and  Theorem 3.2 of \cite{Col2} for $\fg=\fso(n,\C)$).  
 The existence of a strongly regular element in a regular $G$-adjoint orbit
 implies 
that the functions $J_{GZ}$ in (\ref{eq:GZfuns}) form an integrable system on 
the orbit.

To characterize strongly regular elements, we need a little more notation.  For an $n\times n$ matrix $x$, let $x_{i}:=\pi_{i}(x)$, and let $\fz_{\fg_{i}}(x_{i})$ denote the centralizer of $x_{i}$ in $\fg_{i}$ thought of 
as a subalgebra of $\fg$.
\begin{prop}\label{prop:fullsreg}
An element $x\in\fg$ is strongly regular if and only if 
$$
\fz_{\fg_{i}}(x_{i})\cap\fz_{\fg_{i+1}}(x_{i+1})=0 \mbox { for } i=1,\dots, n-1.
$$
\end{prop}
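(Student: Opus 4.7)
The plan is to leverage Theorem \ref{thm:Kostant} and Corollary \ref{c:linearalgebra} one consecutive level at a time along the Gelfand-Zeitlin chain $\fg_1 \subset \fg_2 \subset \cdots \subset \fg_n$, using the fact that each pair $(\fg_{i+1}, \fg_i)$ is itself a multiplicity free spherical pair of the same type treated in Theorem \ref{thm:Kostant}, and then to compare the different levels via an inductive dimension count.

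For the ``only if'' direction, suppose $x$ is strongly regular and fix $i \in \{1,\dots, n-1\}$. Since each $f_{i,j}$ factors through $\pi_i$ and $\pi_i \circ \pi_{i+1} = \pi_i$, the differentials $df_{i,j}(x)$ and $df_{i+1,j}(x)$, identified via $\langle\langle\cdot,\cdot\rangle\rangle$ with elements of $\fg_{i+1}$, depend only on $x_{i+1}$. Strong regularity forces the family $\{df_{i,j}(x)\}_j \cup \{df_{i+1,j}(x)\}_j$ to be linearly independent, so Theorem \ref{thm:Kostant} applied to the subpair $(\fg_{i+1}, \fg_i)$ says that $x_{i+1}$ is $(i+1)$-strongly regular for this subpair; by Equation (\ref{eq:Kreg}) this is precisely $\fz_{\fg_i}(x_i) \cap \fz_{\fg_{i+1}}(x_{i+1}) = 0$.

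For the converse, I would assume $\fz_{\fg_i}(x_i)\cap\fz_{\fg_{i+1}}(x_{i+1}) = 0$ for all $i$. Corollary \ref{c:linearalgebra} applied successively gives that every $x_i$ is $G_i$-regular, so by Equation (\ref{eq:centralizer}) the space $V_i := \mathrm{span}\{df_{i,j}(x): j=1,\dots, r_i\}$ coincides with $\fz_{\fg_i}(x_i) \subset \fg_i$ and has dimension $r_i$. The plan is then to prove by induction on $k$ that $V_1 + \cdots + V_k$ is a direct sum; the base case is trivial. For the inductive step, each $V_j$ with $j \le k-1$ lies inside $\fg_{k-1}$, hence
$$
V_k \cap (V_1 + \cdots + V_{k-1}) \subset \fz_{\fg_k}(x_k) \cap \fg_{k-1}.
$$
The key lemma will be the containment $\fz_{\fg_k}(x_k) \cap \fg_{k-1} \subset \fz_{\fg_{k-1}}(x_{k-1})$, which follows because $\fg_{k-1}^\perp$ is $\ad(\fg_{k-1})$-stable by invariance of the form on $\fg_k$, so the projection $\pi_{k-1}: \fg_k \to \fg_{k-1}$ is $\fg_{k-1}$-equivariant; since $\pi_{k-1}(x_k) = x_{k-1}$, any $y\in\fg_{k-1}$ with $[y,x_k]=0$ will satisfy $[y, x_{k-1}] = \pi_{k-1}([y,x_k]) = 0$. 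Combined with the standing hypothesis $\fz_{\fg_{k-1}}(x_{k-1})\cap\fz_{\fg_k}(x_k)=0$, this closes the induction.

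The only step that is not purely formal is the equivariance of $\pi_{k-1}$, which ultimately rests on the $\ad$-invariance of the bilinear form used throughout; beyond that, the argument is a repeated application of Theorem \ref{thm:Kostant} together with a bookkeeping dimension count, and no separate treatment of the $\fgl$ and $\fso$ cases is needed.
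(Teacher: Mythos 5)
Your proof is correct, but it takes a genuinely different route from the paper. The paper simply quotes the known characterization of strong regularity (Theorem 2.14 of \cite{KW1} for $\fgl(n,\C)$ and Proposition 2.11 of \cite{Col2} for $\fso(n,\C)$), which says that $x$ is strongly regular if and only if every $x_{i}$ is regular in $\fg_{i}$ \emph{and} the centralizer intersections vanish, and then uses Corollary \ref{c:linearalgebra} only to show that the regularity condition is redundant. You instead reprove the full equivalence from scratch: in one direction you apply Theorem \ref{thm:Kostant} to each consecutive pair $(\fg_{i+1},\fg_{i})$ (legitimate, since the gradients of $f_{i,j}$ and $f_{i+1,j}$ at $x$ lie in $\fg_{i+1}$ and coincide with the gradients of the corresponding generators of $\C[\fg_{i+1}]^{K_{i}}$ at $x_{i+1}$, and linear independence of a subfamily of $dJ_{GZ}(x)$ is inherited); in the other direction you combine Corollary \ref{c:linearalgebra}, Kostant's identity $\mathrm{span}\{df_{i,j}(x)\}=\fz_{\fg_{i}}(x_{i})$ from (\ref{eq:centralizer}), and an induction whose engine is the containment $\fz_{\fg_{k}}(x_{k})\cap\fg_{k-1}\subset\fz_{\fg_{k-1}}(x_{k-1})$, proved via $\ad(\fg_{k-1})$-stability of $\fg_{k-1}^{\perp}$. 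That containment is exactly the right observation, and your argument for it is sound. What your route buys is a self-contained and uniform proof for both the $\fgl$ and $\fso$ chains that does not rely on the external criteria of \cite{KW1} and \cite{Col2} (in effect, you re-derive those criteria as a byproduct); what the paper's route buys is brevity, since the nontrivial cross-level bookkeeping you do by induction is absorbed into the cited results, leaving only the one-line application of Corollary \ref{c:linearalgebra}.
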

 \begin{proof}
 An element $x\in\fg$ is strongly regular if and only if the following two conditions hold:
\begin{equation}\label{eq:theconditions}
\begin{split}
&(1)\; x_{i}\in\fg_{i}, x_{i+1} \in\fg_{i+1} \mbox{ are regular for all } i=1,\dots, n-1. \\
&(2)\; \fz_{\fg_{i}}(x_{i})\cap\fz_{\fg_{i+1}}(x_{i+1})=0 \mbox { for } i=1,\dots, n-1.
\end{split}
\end{equation}
For the case $\fg=\fgl(n,\C)$ this is the content of Theorem 2.14 of \cite{KW1}, and for $\fg=\fso(n,\C)$ it 
is Proposition 2.11 of \cite{Col2}.  It follows from Corollary \ref{c:linearalgebra} that if $x_{i+1}\in\fg_{i+1}$ 
satisfies (2) in (\ref{eq:theconditions}), then it automatically satisfies (1).  
\end{proof}

\subsection{The $K$-orbit structure of $\fg(0)$} \label{ss:generic}

We now study the $K$-orbit structure of the Zariski open subset 
$$
\fg(0)=\{x\in\fg: \sigma(x_{\fk})\cap\sigma(x)= \emptyset\}. 
$$
We show that $\fg(0)\subset\fg_{nsreg}$, and that each $K$-orbit in
$\fg(0)$ is closed in $\fg$.  The fact that $\fg(0)\subset\fg_{nsreg}$ follows from the following result in linear algebra.  
\begin{lem}\label{lem:linearalgebra} 
Let $V$ be a finite dimensional complex vector space.  Suppose we are
given a direct sum decomposition of $V$
$$
V=V_{1}\oplus V_{2}.
$$
 Let $X\in \mbox{End}(V)$, and let 
$Y\neq 0\in \mbox{End}(V)$ such that $Y:V_{1}\to V_{1}$ and $Y|_{V_{2}}=0$.  
Suppose that $[Y,X]=0$.  Then $X$ has a nonzero eigenvector $u\in V_{1}$.  
\end{lem}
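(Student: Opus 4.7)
The plan is to use the commuting relation $[Y,X]=0$ to produce a nonzero $X$-invariant subspace sitting inside $V_1$, and then invoke algebraic closure to extract an eigenvector there.

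First I would observe that the hypotheses force $\operatorname{Im}(Y) \subset V_1$: indeed, any $v \in V$ decomposes as $v = v_1 + v_2$ with $v_i \in V_i$, and then $Yv = Yv_1 \in V_1$ since $Y|_{V_2}=0$ and $Y(V_1) \subset V_1$. Because $Y \neq 0$, the subspace $W := \operatorname{Im}(Y)$ is a nonzero subspace of $V_1$.

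Next I would show $W$ is $X$-stable. Since $X$ and $Y$ commute, for every $v \in V$ we have $X(Yv) = Y(Xv) \in \operatorname{Im}(Y) = W$, so $X(W) \subset W$.

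Finally, since $V$ (hence $W$) is finite-dimensional over $\mathbb{C}$ and $W \neq 0$, the restriction $X|_W$ has an eigenvector $u \in W \subset V_1$, which is the desired nonzero eigenvector of $X$ lying in $V_1$. There is no real obstacle here; the only thing to be careful about is verifying that $\operatorname{Im}(Y)$ genuinely lands in $V_1$ and not merely in $V$, which is immediate from the block structure of $Y$ with respect to the decomposition $V = V_1 \oplus V_2$.
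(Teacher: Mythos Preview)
Your proof is correct and follows exactly the same approach as the paper: show that $\operatorname{Im}(Y)$ is a nonzero $X$-stable subspace contained in $V_1$, then extract an eigenvector there.
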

\begin{proof}
The assumptions imply that the image $\Im(Y)$ of $Y$ is nonzero, contained
in $V_1$, and
stable under the action of $X$.   The result follows.
\end{proof}

The following consequence plays a crucial role in our study of $\fg(0)$.  
\begin{prop}\label{p:submatrix}
Let $X,\, Y$, and $V=V_{1}\oplus V_{2}$ be as in the 
statement of Lemma \ref{lem:linearalgebra}.  
Define $X_{1}: V_{1}\to V_{1}$ by $X_{1}:=\pi_{V_{1}}\circ X|_{V_{1}}$, where $\pi_{V_{1}}:V\to V_{1}$ 
is the projection onto $V_{1}$ off $V_{2}$.   Then $\sigma(X_{1})\cap \sigma(X)\neq \emptyset.$  
\end{prop}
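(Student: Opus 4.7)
The proof will be essentially immediate from Lemma \ref{lem:linearalgebra}, so the plan is quite short.

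First, I will invoke Lemma \ref{lem:linearalgebra} directly on the data $(X, Y, V=V_1 \oplus V_2)$: the hypotheses of that lemma are exactly the hypotheses of the present proposition, so it produces a nonzero eigenvector $u \in V_1$ of $X$, with some eigenvalue $\lambda \in \sigma(X)$.

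Second, I will read off that this same $\lambda$ is an eigenvalue of the compressed operator $X_1$. Since $u \in V_1$ and $Xu = \lambda u$ lies in $V_1$, the projection $\pi_{V_1}$ acts trivially on $Xu$, giving
\begin{equation*}
X_1 u \;=\; \pi_{V_1}\!\bigl(X|_{V_1}(u)\bigr) \;=\; \pi_{V_1}(Xu) \;=\; \pi_{V_1}(\lambda u) \;=\; \lambda u.
\end{equation*}
Since $u \neq 0$, this shows $\lambda \in \sigma(X_1)$, and hence $\lambda \in \sigma(X_1)\cap \sigma(X)$, proving the proposition.

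The only substantive content is in the appeal to Lemma \ref{lem:linearalgebra}; there is no real obstacle beyond verifying that the projection step in the second paragraph is a tautology because the eigenvector already lies inside $V_1$. I do not anticipate needing anything further, and no case analysis on whether $V_1$ or $V_2$ is trivial is necessary since $Y \neq 0$ forces $V_1 \neq 0$.
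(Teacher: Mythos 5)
Your proposal is correct and is essentially identical to the paper's proof: both invoke Lemma \ref{lem:linearalgebra} to obtain an eigenvector $u\in V_{1}$ of $X$ with eigenvalue $\lambda$, and then compute $X_{1}u=\pi_{V_{1}}(Xu)=\pi_{V_{1}}(\lambda u)=\lambda u$ to conclude $\lambda\in\sigma(X_{1})\cap\sigma(X)$. No gaps.
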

\begin{proof} 
Let $u\in V_{1}$ be an eigenvector of $X$ of eigenvalue $\lambda$.  It follows from 
definitions that
$$
X_{1} u=\pi_{V_{1}} (Xu) =\pi_{V_{1}}(\lambda u)=\lambda u.
$$
Thus, $\lambda\in\sigma(X_{1})\cap \sigma(X).$
\end{proof}
\begin{exam}\label{ex:gln}
Let $V=\C^{n}$, and let $e_{1},\dots, e_{n}$ be the standard basis of 
$\C^{n}$.  Let $V_{1}=\mbox{span}\{e_{1},\dots, e_{k}\}$, and let 
$V_{2}=\mbox{span}\{e_{k+1},\dots, e_{n}\}$.  Let $x\in\fgl(n,\C)$, and let 
$x_{k}$ be the $k\times k$ submatrix in the upper lefthand corner of $x$.  
We embed $\fgl(k,\C)$ in $\fgl(n,\C)$ in the upper left corner.  Suppose there 
exists nonzero $Y\in \fgl(k,\C)$ with $[Y, x]=0$.  Then Proposition \ref{p:submatrix} implies that $\sigma(x_{k})\cap\sigma(x)\neq 0$. 
\end{exam}
We now return to the pairs $(\fgl(n,\C),\, \fgl(n-1,\C))$ and
$(\fso(n,\C),\, \fso(n-1,\C))$. 
Using Proposition \ref{p:submatrix}, we can prove a fundamental result regarding the structure of 
$\fg(0)$.  
\begin{thm}\label{thm:gzero}
Let $x\in\fg(0)$.  Then $x\in\fg_{nsreg}$.    
\end{thm}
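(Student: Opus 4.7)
The strategy is to argue by contradiction: suppose there exists a nonzero $Y \in \fz_{\fk}(x_{\fk}) \cap \fz_{\fg}(x)$; the plan is to produce a common element of $\sigma(x) \cap \sigma(x_{\fk})$, contradicting $x \in \fg(0)$. In each of the cases $(\fg,\fk)=(\fgl(n,\C),\fgl(n-1,\C))$, $(\fso(2l+1,\C),\fso(2l,\C))$, and $(\fso(2l,\C),\fso(2l-1,\C))$, I first set up a decomposition $V = V_1 \oplus V_2$ of the standard representation such that $\fk$ acts on $V_1$, $\fk$ annihilates $V_2$, and the operator $X_1 = \pi_{V_1} \circ x|_{V_1}$ of Proposition \ref{p:submatrix} coincides with $x_{\fk}|_{V_1}$. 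Concretely: for $\fgl$ take $V_1 = \mbox{span}(e_1,\dots,e_{n-1})$ and $V_2 = \C e_n$; for $\fso(2l+1,\C)/\fso(2l,\C)$ take $V_2 = \C e_0$ and $V_1 = e_0^{\perp}$; and for $\fso(2l,\C)/\fso(2l-1,\C)$ take $V_2 = \C v$ for a non-isotropic vector $v$ in the $(-1)$-eigenspace of a representative of $\theta$, with $V_1 = v^{\perp}$. A direct block-matrix calculation in each case shows that $\fg^{-\theta}$ (respectively the off-diagonal part, in the $\fgl$ case) maps $V_1$ into $V_2$, yielding $X_1 = x_{\fk}|_{V_1}$.

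Proposition \ref{p:submatrix} then produces a common eigenvalue $\lambda$ of $X_1$ on $V_1$ and $x$ on $V$, arising from a common nonzero eigenvector $u \in V_1$. If $\lambda \neq 0$, then $\lambda$ lies in $\sigma(x_{\fk}) \cap \sigma(x)$ under any of the spectral conventions used in the paper, contradicting $x \in \fg(0)$. In the $\fgl$ case this already handles $\lambda = 0$ as well, since $\sigma$ there is just the unconditional multiset of eigenvalues.

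The main obstacle is the case $\lambda = 0$ in the two orthogonal settings, where the paper's convention drops one occurrence of the eigenvalue $0$ from the odd-dimensional spectra. To deal with it, I observe that $\Im(Y) \subset V_1$ is stable under both $x$ (since $Y \in \fz_{\fg}(x)$) and $x_{\fk}$ (since $Y \in \fz_{\fk}(x_{\fk})$), and that for $v \in \Im(Y)$ the inclusion $xv \in \Im(Y) \subset V_1$ forces the $\fp$-component $x_{\fp} v$ to vanish, so $x|_{\Im(Y)} = x_{\fk}|_{\Im(Y)}$ as endomorphisms of $\Im(Y)$. If this common restriction has a nonzero eigenvalue $\mu$, the previous paragraph reapplies with $\mu$ and gives a contradiction. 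Otherwise the restriction is nilpotent, so $\Im(Y)$ sits inside both generalized null spaces $V_0(x) \subset V$ and $V_0(x_{\fk}) \subset V_1$. Since $Y$ is a nonzero element of the orthogonal algebra $\fso(V_1)$, its rank satisfies $\dim \Im(Y) \ge 2$; combining this with the standard parity fact that the generalized null space of an element of $\fso(W)$ has the same parity as $\dim W$, one deduces $\dim V_0(x) \ge 3$ in the $\fso(2l+1,\C)$ case and $\dim V_0(x_{\fk}) \ge 3$ in the $\fso(2l,\C)/\fso(2l-1,\C)$ case. In either subcase, this forces $0 \in \sigma(x)$ and $0 \in \sigma(x_{\fk})$ simultaneously, again contradicting $x \in \fg(0)$.

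The delicate point is the careful bookkeeping around the dropped $0$ in the odd-dimensional spectra, which is exactly what necessitates the refinement using $\Im(Y)$. Everything else is linear algebra (Proposition \ref{p:submatrix}) together with basic structure theory (parity of generalized null spaces, minimum rank of nonzero elements) of the orthogonal Lie algebras.
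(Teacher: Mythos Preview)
Your proof follows essentially the same route as the paper: argue by contraposition, set up the decomposition $V=V_1\oplus V_2$ (your choices coincide with the paper's), and invoke Proposition~\ref{p:submatrix} to produce a common eigenvalue of $x$ and $X_1=x_{\fk}|_{V_1}$. The paper's proof stops there in each of the three cases.

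Where you go further is in your treatment of the possibility $\lambda=0$ in the two orthogonal cases. The paper simply writes ``the result now follows from Proposition~\ref{p:submatrix}'', but Proposition~\ref{p:submatrix} only yields a common eigenvalue in the \emph{ordinary} sense, while membership in $\fg(\geq 1)$ is formulated via the modified spectrum $\sigma$ that drops one copy of $0$ in the odd-dimensional orthogonal algebra. Thus, if the only common eigenvalue produced is $\lambda=0$, one must still check that $0$ survives in the relevant modified spectrum. Your refinement---observing that $x|_{\Im(Y)}=x_{\fk}|_{\Im(Y)}$ since $x_{\fp}(V_1)\subset V_2$, using $\dim\Im(Y)\geq 2$ for nonzero $Y\in\fso(V_1)$, and invoking the parity of the generalized $0$-eigenspace of an element of $\fso(W)$---is correct and closes this gap cleanly. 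In effect, you have supplied a detail that the paper elides. The one minor wording issue is that ``the previous paragraph reapplies with $\mu$'' is slightly imprecise; what you mean (and what you have) is simply that a nonzero eigenvalue $\mu$ of $x|_{\Im(Y)}=x_{\fk}|_{\Im(Y)}$ lies in $\sigma(x)\cap\sigma(x_{\fk})$ directly.
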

\begin{proof}
Let $x\in\fg$ and suppose that $\fz_{\fk}(x_{\fk})\cap\fz_{\fg}(x)\neq 0$.  
We show that $x\in\fg(\geq 1)$ by considering the types $A, B, D$
separately. 
First, suppose that $\fg$ is type $A$.  Then decompose $V=\C^{n}$ as
$V=V_{1}\oplus V_{2}$ where $V_{1}=\mbox{span}\{e_{1},\dots, e_{n-1}\}$, 
and $V_{2}=\mbox{span}\{ e_{n}\}$.  Now apply Example \ref{ex:gln}.  
Similarly, when $\fg=\fso(2l,\C)$, we decompose 
$\C^{2l}$ as $V=V_{1}\oplus V_{2}$, where 
$V_{1}=\mbox{span} \{ e_{\pm 1}, \dots, e_{\pm (l-1)}, e_{l}+e_{-l}\}$, and 
$V_{2}=\mbox{span} \{e_{l}-e_{-l}\}$.  The reader can check that
 $\fk$ annihilates $V_{2}$.  Since the involution $\theta$ acts on 
$e_{\pm i}$ via $\theta(e_{\pm i})=e_{\pm i}$ for $i\neq l$ and $\theta(e_{l})=e_{-l}$ (see Section \ref{ss:symmetricreal}), we know $\theta$ acts on $V_{1}$ as the identity and on $V_{2}$
as the negative of the identity.  Therefore $x_{\fk}: V_{1}\to V_{1}$ and $x_{\fg^{-\theta}}: V_{1}\to V_{2}$, and it follows that $x_{1}=x_{\fk}$, where $x_1 = \pi_{V_1}(x|_{V_1})$. 
The result now follows from Proposition \ref{p:submatrix}.  The case
of $\fso(2l+1,\C)$ 
follows by taking $V_1 = \mbox{span} \{ e_{\pm 1}, \dots, e_{\pm l} \}$,
$V_2=\mbox{span} \{ e_0 \}$, and using Proposition \ref{p:submatrix}.
\end{proof}  

%\end{rem}

Let $c=(c_{r_{n-1}}, c_{r_{n}})\in V^{r_{n-1}, r_{n}}$ and write $c_{r_{i}}=(c_{i, 1},\dots, c_{i, r_{i}})\in \C^{r_{i}}$ for $i=n-1,\, n$.
 Let $I_{n, c}$ be the ideal of $\C[\fg]$ generated by the functions $f_{i, j}-c_{i,j}$ for $i=n-1, \, n$ and $j=1,\dots, r_{i}$.  
 \begin{cor}\label{c:isgenericallyrad}
 Let $c=(c_{r_{n-1}}, c_{r_{n}})\in V^{r_{n-1}, r_{n}}(0)$, so that $\Phi_n^{-1}(c)\subset\fg(0)$.  
 \begin{enumerate}
\item Then $I_{n,c}$ is radical, so that $I_{n,c}$ is the ideal of the fibre 
$\Phi_n^{-1}(c)$.
 Further, the variety $\Phi_{n}^{-1}(c)$ is smooth.
\item The fibre $\Phi_{n}^{-1}(c)$ is a single closed $K$-orbit.
\end{enumerate}\end{cor}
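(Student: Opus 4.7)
The plan is to deduce both assertions from the observation that every point of $\Phi_n^{-1}(c)$ is $n$-strongly regular. Indeed, by hypothesis $\Phi_n^{-1}(c) \subset \fg(0)$, so Theorem \ref{thm:gzero} gives $\Phi_n^{-1}(c) \subset \fg_{nsreg}$. Combined with Theorem \ref{thm:Kostant}, this is equivalent to the $r_{n-1}+r_n$ differentials $df_{i,j}(x)$, $i=n-1,n$, $j=1,\dots,r_i$, being linearly independent at every $x \in \Phi_n^{-1}(c)$.

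For part (1), these differentials are exactly those of the generators $f_{i,j}-c_{i,j}$ of $I_{n,c}$, so the Jacobian matrix of a generating set of $I_{n,c}$ has maximal rank $r_{n-1}+r_n$ at every closed point of the zero scheme of $I_{n,c}$. By Proposition \ref{prop:flat}(3), the fibre $\Phi_n^{-1}(c)$ is equidimensional of dimension $\dim\fg - r_{n-1}-r_n$, matching the expected codimension of $I_{n,c}$. Hence the scheme cut out by $I_{n,c}$ is a complete intersection which is smooth of the correct dimension at every closed point; the resulting regularity of all local rings forces reducedness, so $I_{n,c}$ is radical and $\Phi_n^{-1}(c)$ is smooth.

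For part (2), Lemma \ref{lem:dimest}(2) says every $K$-orbit meeting $\fg_{nsreg}$ has dimension $\dim K$, which by Equation (\ref{eq:quotdim}) equals $\dim\fg - r_{n-1}-r_n$, the common dimension of the fibres of $\Phi_n$. Consequently every $K$-orbit contained in $\Phi_n^{-1}(c)$ is open, and therefore also closed, in the fibre. By Proposition \ref{prop:flat}(2), $\Phi_n$ is identified with the affine invariant theory quotient $\fg \to \fg//K$ by the reductive group $K$; standard geometric invariant theory then produces a unique closed $K$-orbit $\mathcal{O}_0 \subset \Phi_n^{-1}(c)$ which lies in the closure of every other $K$-orbit in the fibre. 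Since all orbits in $\Phi_n^{-1}(c)$ share the same dimension, no orbit can lie properly in the closure of another, forcing $\Phi_n^{-1}(c) = \mathcal{O}_0$ to be a single closed $K$-orbit.

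The main delicate point is ruling out the possibility of several disjoint, maximal-dimensional $K$-orbits exhausting the fibre. This is achieved only by combining the $n$-strong regularity of every point (via Theorems \ref{thm:gzero} and \ref{thm:Kostant}), the flatness–dimension match supplied by Proposition \ref{prop:flat}, and the uniqueness of the closed orbit in each invariant theory fibre for a reductive group action; none of these ingredients suffices on its own.
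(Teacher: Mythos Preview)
Your proof is correct and follows essentially the same approach as the paper: both deduce everything from the fact that $\Phi_n^{-1}(c)\subset\fg_{nsreg}$ via Theorem~\ref{thm:gzero}, then use Theorem~\ref{thm:Kostant} to get linear independence of the differentials, invoke a standard Jacobian/complete-intersection criterion for radicality and smoothness (the paper cites Eisenbud's Theorem~18.15(a)), and combine the dimension equality $\dim K=\dim\Phi_n^{-1}(c)$ with uniqueness of the closed orbit in a GIT fibre to conclude part~(2). Your write-up is slightly more explicit than the paper's in spelling out why the unique closed orbit must exhaust the fibre, but the logic is the same.
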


% \begin{equation}\label{eq:orbitdecomp}
% \Phi_{n}^{-1}(c)=\coprod \Ad(K)\cdot x_{i}
% \end{equation}
% is the connected and irreducible component decomposition of $\Phi_{n}^{-1}(c)$.  
 %its connected components coincide with its irreducible components.  
% \end{cor} 
 \begin{proof}
 By Theorem \ref{thm:gzero} every element of the fibre $\Phi_{n}^{-1}(c)$ is $n$-strongly regular. It follows from Theorem \ref{thm:Kostant} and Remark
\ref{r:nsrfacts}
that the differentials 
 $\{df_{i,j}(x):\, i=n-1,\, n; \; j=1,\dots, r_{i}\}$ are independent for all $x\in \Phi_{n}^{-1}(c)$.   
By Theorem 18.15 (a) of \cite{Eis}, the ideal 
 $I_{n,c}$ is radical, so $I_{n,c}$ is the ideal of 
$\Phi^{-1}_{n}(c)$.  The smoothness
 of $\Phi^{-1}_{n}(c)$ now follows since the differentials of the generators of $I_{n,c}$ are independent at every point of $\Phi_{n}^{-1}(c)$.
For the second assertion, note first that 
$$\dim(K)=\dim(\fg)-\dim (\fg//K)=\dim( \Phi_{n}^{-1}(c)),$$
where the first equality follows from Equations (\ref{eq:flagvarieties}) and (\ref{eq:quotdim}), and the second equality follows from Proposition \ref{prop:flat} (3).  
%=\dim(\Phi_{n}^{-1}(c)).
By Lemma \ref{lem:dimest}, $\dim(K\cdot x)=\dim(K)$ for all 
$x\in \Phi_{n}^{-1}(c)$.  By Proposition \ref{prop:flat} (2),
each fibre $\Phi_{n}^{-1}(c)$ has a unique closed $K$-orbit, which
implies the assertion.
\end{proof} 

Using Theorem \ref{thm:gzero}, we can generalize a result of the first author to the orthogonal setting 
(cf the first statement of Theorem 5.15, \cite{Col1}).  
Consider the Zariski open subvariety of $\fso(n,\C)$
$$
\fso(n,\C)_{\Theta}:=\{x\in\fso(n,\C):\, \sigma(x_{i})\cap \sigma(x_{i+1})=\emptyset \mbox{ for } i=2,\dots, n-1\}.
$$
\begin{prop}\label{prop:orthofgtheta}
The elements of $\fso(n,\C)_{\Theta}$ are strongly regular.
\end{prop}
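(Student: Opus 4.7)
The plan is to verify the strong regularity condition level-by-level using Proposition \ref{prop:fullsreg} combined with Theorem \ref{thm:gzero} applied to each nested pair $(\fso(i+1,\C), \fso(i,\C))$. Specifically, by Proposition \ref{prop:fullsreg}, it suffices to show that
\[
\fz_{\fg_{i}}(x_{i}) \cap \fz_{\fg_{i+1}}(x_{i+1}) = 0 \quad \text{for } i = 1, \dots, n-1.
\]

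First I would dispose of the case $i = 1$ trivially: since $\fg_{1} = \fso(1,\C) = 0$, the intersection is automatically zero. For $i \geq 2$, the key observation is that the chain $\fso(i,\C) \subset \fso(i+1,\C)$ is precisely a symmetric pair of the type studied in Section \ref{ss:symmetricreal}, with the smaller algebra sitting inside as the $\theta$-fixed subalgebra, and $x_{i} = (x_{i+1})_{\fk}$ with respect to this pair. The hypothesis $x \in \fso(n,\C)_{\Theta}$ tells us that $\sigma(x_{i}) \cap \sigma(x_{i+1}) = \emptyset$, which (by the definition in (\ref{eq:coincidences}) and Remark \ref{r:thesame}) means exactly that $x_{i+1}$ lies in the variety $\fg_{i+1}(0)$ for the pair $(\fso(i+1,\C), \fso(i,\C))$.

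Then I would invoke Theorem \ref{thm:gzero} applied to this pair, which yields $x_{i+1} \in (\fg_{i+1})_{nsreg}$, i.e.\ $\fz_{\fg_{i}}(x_{i}) \cap \fz_{\fg_{i+1}}(x_{i+1}) = 0$ by Definition \ref{dfn:nsreg}. Running this argument for every $i = 2, \dots, n-1$ establishes all the required vanishing intersections, and Proposition \ref{prop:fullsreg} then delivers the conclusion that $x$ is strongly regular.

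There is no real obstacle: Theorem \ref{thm:gzero} was precisely designed to upgrade the "disjoint spectra" condition to the strong centralizer condition at two consecutive levels, and Proposition \ref{prop:fullsreg} is precisely what reduces the full Gelfand-Zeitlin strong regularity condition to these two-level conditions. The only minor subtlety to confirm is that the notion of $\sigma(x_{i})$ used in the definition of $\fso(n,\C)_{\Theta}$ matches the convention (\ref{eq:spectrum}) used in the definition of $\fg(0)$ relative to the pair $(\fso(i+1,\C), \fso(i,\C))$, and that $x_{i}$ coincides with the $\fk$-part of $x_{i+1}$ in that pair, both of which follow directly from the definitions.
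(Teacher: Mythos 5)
Your proposal is correct and follows essentially the same route as the paper: observe that $x\in\fso(n,\C)_{\Theta}$ puts each $x_{i}$ in $\fso(i,\C)(0)$ for the pair $(\fso(i,\C),\fso(i-1,\C))$, apply Theorem \ref{thm:gzero} to get the vanishing of the centralizer intersections, and conclude via Proposition \ref{prop:fullsreg}. Your explicit handling of the trivial level $\fg_{1}=\fso(1,\C)=0$ and the remark matching the spectrum conventions are fine points the paper leaves implicit, but the argument is the same.
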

\begin{proof}
If $x\in \fso(n,\C)_{\Theta}$, then $x_{i}\in \fso(i,\C)(0)$ for $i=2,\dots, n$.  
It follows from Theorem \ref{thm:gzero} that $\fz_{\fg_{i-1}}(x_{i-1})\cap\fz_{\fg_{i}}(x_{i})=0$.  
The result now follows from Proposition \ref{prop:fullsreg}. 

\end{proof}
\begin{rem}\label{r:genericfibres}
The Gelfand-Zeitlin system for $\fgl(n,\C)$ is much better understood
than the Gelfand-Zeitlin system for $\fso(n,\C)$.
Let $J_{GZ}$ be the Gelfand-Zeitlin functions for either $\fg=\fgl(n,\C)$
or $\fg=\fso(n,\C)$
 defined in (\ref{eq:GZfuns}).  Consider
the Kostant-Wallach morphism:
\begin{equation}\label{eq:orthoKW}
\Phi:\fg \to \C^{r_1} \times \C^{r_{2}}\times\dots\times\C^{r_{n-1}}\times \C^{r_{n}} \mbox{ given by } \Phi(x)=( f_{i,j}(x))_{f_{i,j} \in J_{GZ}}  
\end{equation}
(for $\fg=\fso(n,\C)$, $\C^{r_1}$ is a point).
In \cite{KW1}, Kostant and Wallach prove that $\Phi$ is surjective,
and in Theorem 5.15 of \cite{Col1}, the first author shows that
for all $x$ in the Zariski open set
$$\fgl(n,\C)_{\Theta}:=\{x\in\fgl(n,\C):\, \sigma(x_{i})\cap \sigma(x_{i+1})=\emptyset \mbox{ for } i=2,\dots, n-1\},$$
 the fibre 
$\Phi^{-1}(\Phi(x))$ is irreducible.   
In later work, we will use the flatness assertion of Proposition \ref{prop:flat}
to show that $\Phi$ is surjective in the orthogonal case, which
together with the preceding proposition, shows that every regular
adjoint orbit contains strongly regular elements.  This implies that 
the Gelfand-Zeitlin functions in (\ref{eq:GZfuns}) form an integrable system on every regular adjoint orbit 
in $\fso(n,\C)$.  We will also use 
Proposition \ref{prop:orthofgtheta} and part (2) of 
Corollary \ref{c:isgenericallyrad} to show that $\Phi^{-1}(\Phi(x))$ is irreducible for $x\in\fso(n,\C)_{\Theta}$.  
The proofs of both these results for $\fso(n,\C)$ are different
and more conceptual than the analogous proofs for $\fgl(n,\C)$,
and we will develop these ideas in further work on the orthogonal
Gelfand-Zeitlin system.
%This extends
%the second statement of Theorem 5.15 of \cite{Col1} to the orthogonal setting.  
%The argument in \cite{Col1} depends on a difficult characteristic
%polynomial argument in the case of $\fgl(n)$, which is not feasible in
%the orthogonal case.  In future work, we will apply the results of this
%paper to further study the Gelfand-Zeitlin system in the orthogonal case.
\end{rem}

 %Equation \ref{eq:orbitdecomp} immediately implies the following about $K$-orbits of elements
% in $\fg(0)$.

%\begin{prop}\label{prop:GIT1}
%The partial Kostant-Wallach map $\Phi_{n}:\fg\to V^{r_{n-1}, r_{n}}$ is a GIT quotient for the $K$ action 
%on $\fg$ by conjugation.  In particular, $\Phi_{n}$ is surjective.  
%\end{prop}

%Now, \cite{Kn}, Lemma 6.3 implies that $\Psi$ is surjective, whence $\Phi_{n}$ is surjective.

%THE ONLY THING ABOUT THIS SECTION IS THAT YOU ARE USING HERE THAT 
%$x\in\fg_{nsreg}$ IF AND ONLY IF $\fz_{\fk}(x_{\fk})\cap \fz_{\fg}(x)=0$.  DOES THAT GET PROVED LATER OR 
%YOU WOULD HAVE TO MOVE THE PROOF THAT UP HERE.  TECHNICALLY YOU COULD MOVE THE PROOF
%UP HERE, BECAUSE THAT IS INDEPENDENT ABOUT THE STUFF INVOLVING THE GIT QUOTIENT.  
%FOR THE GIT QUOTIENT STUFF, YOU ONLY NEED THAT THE $K$-ORBIT IS MAX DIM'L WHICH THIS PROVES.  

\subsection{Classification of closed $K$-orbits on $\fg$} \label{s:closedKorbits}

In Section \ref{ss:generic}, we showed that $K$-orbits in $\fg(0)$ are closed. In this section,
we describe the other closed $K$-orbits in $\fg$.  Our main tool is Theorem \ref{thm:bigthm} when $(\fg, K)=(\fso(n,\C), SO(n-1,\C))$
and Theorem 3.7 of \cite{CEeigen} when $(\fg, K)=(\fgl(n,\C), K=GL(n-1,\C))$.
  Recall the varieties 
$\fg(i)=\fg(\geq i)\setminus\fg(\geq i+1)$ defined in (\ref{eq:fgl}) and the partition $\fg=\bigcup_{i=0}^{r_{n-1}} \fg(i)$ of $\fg$ in (\ref{eq:fgipart}). For 
the analogous definition in type A, see Equation (3.3) of \cite{CEeigen}.)
\begin{thm}\label{thm:closedKinfg}
Let $x\in\fg(i)$, $i=0,\dots, r_{n-1}$.  Then $K\cdot x$ is closed if and only if $K\cdot x\cap\fl(i)\neq\emptyset$, where $\fl(i):=\fg(i)\cap\fl$ and $\fl$ is a $\theta$-stable Levi subalgebra of 
the following form:

%\cap\fl(i)\neq\emptyset$, where $\fl$ is a $\theta$-stable Levi subalgebra of 
%the following form:
%
\begin{enumerate}
\item If $\fg=\fgl(n,\C)$, then $\fl$ is the Levi subalgebra of 
block diagonal matrices $$\fl= \fgl(n-i,\C)\oplus\fgl(1,\C)^{i}.$$
\item If $\fg=\fso(2l+1, \C)$, then $\fl$ is the $\theta$-stable
Levi subalgebra defined in Theorem \ref{thm:YQs2}.
%$$\fl=\fso(2(l-i)+1,\C)\oplus\fs(\fgl(1,\C)\oplus\fgl(1,\C))^{i} $$ (see (\ref{eq:Levi2})).
\item If $\fg=\fso(2l, \C)$, then $\fl$ is the $\theta$-stable Levi
subalgebra defined in Theorem \ref{thm:YQs1}.

%If $i=0$, then $\fl=\fg$ in all types. 

\end{enumerate}
\end{thm}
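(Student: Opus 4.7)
The plan is to combine Corollary~\ref{c:parabolics} (and its $\fgl$-analogue established in \cite{CEeigen}) with a contraction along a cocharacter of $K$ lying in $Z(L)$ and acting with positive weights on the nilradical $\fu$. Such a cocharacter exists in all cases: in the orthogonal setting, $\fz\subset\fk$ by Remark~\ref{r:centerink} yields $Z(L)^\circ\subset K$, while in the $\fgl$ setting it is exhibited by direct calculation with the diagonal torus. The forward direction of the theorem is then immediate, while the reverse direction will identify $K\cdot y$ for $y\in\fl(i)$ with the unique closed $K$-orbit in the fibre $\Phi_n^{-1}(\Phi_n(y))$.

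First I will show that for every $x\in\fg(i)$, the closure $\overline{K\cdot x}$ meets $\fl(i)$. Since $\fg(i)\subset Y_{Q_\fr}=\Ad(K)\fr$, one can write $\Ad(k^{-1})x=y+u$ with $y\in\fl$, $u\in\fu$, and $k\in K$. For a cocharacter $\lambda\colon\C^{*}\to Z(L)\cap K$ acting with positive weights on $\fu$, one has $\lim_{t\to 0}\Ad(k\lambda(t)k^{-1})x=\Ad(k)y$. The $K$-invariance of $\Phi_n$ then forces $\Phi_n(\Ad(k)y)=\Phi_n(x)\in V^{r_{n-1},r_n}(i)$, so $y\in\fg(i)\cap\fl=\fl(i)$. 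If $K\cdot x$ is closed, $\Ad(k)y\in K\cdot x\cap\fl(i)$, which establishes the forward direction.

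For the converse, suppose $y\in K\cdot x\cap\fl(i)$ and decompose $y=y_\fz+y_{\fl_{ss}}$ along $\fl=\fz\oplus\fl_{ss}$. The condition $y\in\fg(i)$ forces $y_{\fl_{ss}}$ to lie in the zero-overlap locus $\fl_{ss}(0)$ of the smaller multiplicity-free spherical pair $(\fl_{ss},\fl_{ss}\cap\fk)$. Let $\mathcal{O}_0$ denote the unique closed $K$-orbit in $\Phi_n^{-1}(\Phi_n(y))$. Applying the previous step to a point of $\mathcal{O}_0$ (whose closure equals $\mathcal{O}_0$) yields $y_0\in\mathcal{O}_0\cap\fl(i)$, and it then suffices to show that $y$ and $y_0$ are $K$-conjugate, for then $K\cdot y=K\cdot y_0=\mathcal{O}_0$ is closed. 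The equality $\Phi_n(y)=\Phi_n(y_0)$ together with the block structure of $\fl$ forces the multisets of eigenvalues of $y_\fz$ and $(y_0)_\fz$ to agree; a representative in $K$ of a suitable element of $W_K$ normalizing $\fl$ then brings $(y_0)_\fz$ to $y_\fz$, and the residual $\fl_{ss}$-components, now lying in the same fibre of the Kostant-Wallach map for the smaller pair, are conjugate under $L_{ss}\cap K$ by Corollary~\ref{c:isgenericallyrad}.

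The main technical obstacle is this Weyl-group matching of central parts. In the orthogonal cases, it will rely on the observation that the Pfaffian information carried by $\Phi_n$ records exactly the sign data distinguishing the $W_K$-orbit inside the $W$-orbit on $\fh$, so the agreement $\Phi_n(y)=\Phi_n(y_0)$ forces agreement of the $W_K$-orbits of the central parts. In the $\fgl$ case the matching is more straightforward, because $W_K\cong S_{n-1}$ already contains all the needed permutations.
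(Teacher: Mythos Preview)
Your proposal is correct and follows essentially the same route as the paper's proof, which is organized into Lemmas~\ref{lem:inclosure} and~\ref{lem:elementsinl}: first contract into $\fl(i)$ along a one-parameter subgroup in $Z(L)\cap K$, then show that any two points of $\fl(i)$ in the same $\Phi_n$-fibre are $K$-conjugate by matching the $\fz$-components via an element of $N_K(L\cap K)$ and then applying Corollary~\ref{c:isgenericallyrad} to the $\fl_{ss}$-components. Your identification of the Weyl-group matching of central parts as the delicate step is apt; the paper simply asserts the existence of the required $\dot w\in N_K(L\cap K)$, whereas you correctly note that in type $D$ one must use the Pfaffian constraint coming from $\chi_n$ to ensure the needed sign parity (this is genuinely necessary when $i=r_{n-1}$, where there is no room in $\fl_{ss}$ to absorb an odd sign change).
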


Theorem \ref{thm:closedKinfg} will follow from two lemmas.
\begin{lem}\label{lem:inclosure}
Let $x\in\fg(i)$, and let $\fl$ be the corresponding
 Levi subalgebra in Theorem \ref{thm:closedKinfg}.  
Then $\overline{K\cdot x}$ contains an 
element of $\fl(i)$.
\end{lem}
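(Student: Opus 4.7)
The strategy is to realize $x$ in a $\theta$-stable parabolic subalgebra with Levi factor $\fl$, and then use a cocharacter in the connected centre of $\fl$, lying inside $K$, to contract the nilradical part of $x$ and produce the required element of $\fl(i)$.

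By Corollary \ref{c:parabolics} (for $i < r_{n-1}$) together with (\ref{eq:oneclosed})--(\ref{eq:twoclosed}) (for $i = r_{n-1}$) in the orthogonal case, and by the analogous irreducible component description in \cite{CEeigen} for $\fg = \fgl(n,\C)$, each $x \in \fg(i)$ lies in $Y_{Q_\fr}$ for a $\theta$-stable parabolic subalgebra $\fr = \fl \oplus \fu$ whose Levi factor $\fl$ has precisely the form prescribed in the statement. Consequently there exists $k \in K$ with $y := \Ad(k) x \in \fr$; writing $y = y_\fl + y_\fu$ with $y_\fl \in \fl$ and $y_\fu \in \fu$, the problem is reduced to showing $y_\fl \in \overline{K \cdot x}$ and $y_\fl \in \fg(i)$.

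For the first reduction, I would exhibit a one-parameter subgroup $\lambda : \C^{*} \to K$ taking values in the connected centre $Z$ of $\fl$ such that $\Ad(\lambda(t))$ acts on every root space of $\fu$ by a strictly positive power of $t$. In the orthogonal cases, Remark \ref{r:centerink} ensures $\fz \subset \fk$, so $Z \subset K$, and the standard cocharacter of $Z$ associated with the parabolic $\fr$ already lies in $K$. In the general linear case, the explicit block form $\fl = \fgl(n-i,\C) \oplus \fgl(1,\C)^{i}$, where the last $\fgl(1,\C)$ factor corresponds to the $n$-th diagonal entry (the coordinate omitted by $\fk$), allows one to write down such a $\lambda$ directly inside $GL(n-1,\C) = K$. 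With such $\lambda$ in hand,
\[
\lim_{t \to 0} \Ad(\lambda(t))\, y \; = \; y_\fl \;\in\; \overline{K \cdot y} \; = \; \overline{K \cdot x}.
\]

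To conclude, since $\Phi_n$ is $K$-invariant and continuous,
\[
\Phi_n(y_\fl) \; = \; \lim_{t \to 0} \Phi_n(\Ad(\lambda(t)) y) \; = \; \Phi_n(y) \; = \; \Phi_n(x) \;\in\; V^{r_{n-1}, r_n}(i),
\]
so by Equation (\ref{eq:fgl}), $y_\fl \in \Phi_n^{-1}(V^{r_{n-1}, r_n}(i)) = \fg(i)$, giving $y_\fl \in \fg(i) \cap \fl = \fl(i)$ as required. The main obstacle is the existence of the contracting cocharacter inside $K$: in the orthogonal cases this is essentially the content of Remark \ref{r:centerink}, and in the general linear case it is a direct verification using the explicit block description of $\fl$ relative to $\fk = \fgl(n-1,\C)$.
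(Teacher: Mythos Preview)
Your proposal is correct and follows essentially the same approach as the paper: conjugate $x$ into the $\theta$-stable parabolic $\fr$ using Corollary \ref{c:parabolics} (or its $\fgl(n,\C)$ analogue), then contract the nilradical part via a one-parameter subgroup in the centre of $L$ lying in $K$, and conclude that the limit lies in $\fl(i)$ because $\overline{K\cdot x}\subset \Phi_n^{-1}(\Phi_n(x))\subset \fg(i)$. The paper phrases the contraction as $\lim_{t\to -\infty}\Ad(\exp tz)$ for $z\in\fz$ with $\alpha(z)>0$ on $\fu$, which is equivalent to your cocharacter formulation. You are in fact slightly more careful in the general linear case: the paper asserts $\fz\subset\fk$ is ``clear for type A,'' but with $\fk=\fgl(n-1,\C)$ (rather than $\fgl(n-1,\C)\oplus\fgl(1,\C)$) the $(n,n)$-component of $\fz$ is not in $\fk$; your observation that one can nonetheless choose a contracting cocharacter with trivial $(n,n)$-entry inside $GL(n-1,\C)$ is exactly what is needed.
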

\begin{proof}
%For the purposes of this proof, we let $\fk=\fgl(n-1,\C)\oplus\fgl(1,\C)$ in the type $A$ case.
Any element $x\in\fg(i)$ is $K$-conjugate to an element in a $\theta$-stable parabolic subalgebra 
$\fr$ with Levi factor $\fl$.  This follows from Corollary \ref{c:parabolics} when 
$\fg=\fso(n,\C)$ and from Theorem 3.7 of \cite{CEeigen} when $\fg=\fgl(n,\C)$.  
Thus, we can assume that $x\in\fr$.  
We choose an element $z$ in the centre $\fz$ of $\fl$ such that $\alpha(z)>0$ for every
root $\alpha$ of $\fu$, the nilradical of $\fr$.   Note that $\fz
\subset \fk$, which is clear for type A, and follows by Remark \ref{r:centerink}
for the orthogonal cases.
Then
\begin{equation}\label{eq:flclosure}
\displaystyle\lim_{t\to-\infty}\Ad(\exp tz) x\in\fl\cap\overline{K\cdot x}=\fl(i)\cap\overline{K\cdot x},
\end{equation}
where the last equality follows since $\overline{K\cdot x}\subset \Phi^{-1}(\Phi(x))\subset \fg(i)$.
\end{proof}
We now study the $K$-orbits of elements in $\fl$.  
\begin{lem}\label{lem:elementsinl}
Let $\fl$ be one of the Levi subalgebras in Theorem \ref{thm:closedKinfg}.  Any two 
elements in $\fl(i)$ which lie in the same fibre of the partial Kostant-Wallach map $\Phi_{n}$ are 
$K$-conjugate. 
\end{lem}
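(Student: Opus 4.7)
The plan is to decompose elements of $\fl$ into their centre and semisimple components and conjugate each piece separately. For any $x\in\fl(i)$, write $x=z+X_{ss}$ with $z$ in the centre $\fz$ of $\fl$ and $X_{ss}\in\fl_{ss}$. By Remark \ref{r:centerink} in the orthogonal cases (and by direct inspection in the $\fgl$ case), $\fz\subset\fk$, so the centre coordinates of $z$ appear in both $\sigma(x)$ and $\sigma(x_{\fk})$. The explicit description of $\fl$ in Theorems \ref{thm:YQs2} and \ref{thm:YQs1} shows that $(\fl_{ss},\fl_{ss}\cap\fk)$ is again one of the multiplicity-free spherical pairs of the paper, so Corollary \ref{c:isgenericallyrad} applies to it.

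The first step is to observe that $x\in\fl(i)$ forces $X_{ss}\in\fl_{ss}(0)$: the centre $\fz$ already contributes exactly $i$ (respectively $2i$ in the orthogonal case) shared eigenvalues to $\sigma(x)\cap\sigma(x_{\fk})$, so any further coincidence between $\sigma(X_{ss})$ and $\sigma((X_{ss})_{\fk\cap\fl_{ss}})$ would place $x$ in $\fg(\geq i+1)$, contradicting $x\in\fl(i)$. Writing $y=z'+Y_{ss}$ similarly, the assumption $\Phi_n(x)=\Phi_n(y)$ combined with $x,y\in\fl(i)$ separates into two independent equalities: the multisets of centre coordinates of $z$ and $z'$ coincide, and the Kostant-Wallach images of $X_{ss}$ and $Y_{ss}$ for the smaller pair $(\fl_{ss},\fl_{ss}\cap\fk)$ agree.

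Next, I would exhibit an element $w$ in the Weyl group of $G$ with $w\cdot z=z'$ and verify that $w$ can be taken in $W_K$, then lift it to $\dot w\in N_K(\fh)$; for $\fso(2l+1,\C)$ this follows from Proposition \ref{prop_weylk}(1) after possibly composing with a sign change acting trivially on the support of $z$, and for $\fso(2l,\C)$ one uses the elements $w_j\in W_K$ from Equation (\ref{eq:actionwj}) to correct parity. After replacing $x$ by $\Ad(\dot w)x$ we may assume $z=z'$. Corollary \ref{c:isgenericallyrad} applied to $(\fl_{ss},\fl_{ss}\cap\fk)$ then produces $k'\in K\cap L_{ss}$ with $\Ad(k')X_{ss}=Y_{ss}$; since $K\cap L_{ss}$ centralises $\fz$, the element $k'\dot w\in K$ sends $x$ to $y$.

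The closed $K$-orbit cases, where $\fl=\fh$ is the Cartan subalgebra, are handled directly: $\Phi_n$ restricted to $\fh$ factors through the $W_K$-quotient in each case, so matching Kostant-Wallach invariants gives $W_K$-conjugacy on $\fh$ and hence $K$-conjugacy. I expect the Weyl-group step to be the main obstacle, particularly the parity issue for $\fso(2l,\C)$ where $W_K$ consists only of products of even numbers of sign changes.
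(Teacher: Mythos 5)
Your proposal is correct and follows essentially the same route as the paper's proof: decompose elements of $\fl$ as centre plus $\fl_{ss}$-part, match the centre components by a Weyl-group element of $K$ (the paper simply asserts the existence of $\dot{w}\in N_{K}(L\cap K)$ where you spell out the parity bookkeeping via Proposition \ref{prop_weylk} and the $w_j$), observe that the semisimple parts lie in $\fl_{ss}(0)$ with equal invariants, and finish by applying Corollary \ref{c:isgenericallyrad} to the smaller multiplicity-free pair $(\fl_{ss},\fl_{ss}\cap\fk)$. The extra details you include (that $K\cap L_{ss}$ centralises $\fz$, and the degenerate Cartan case) are consistent with, and at the same level of rigor as, the paper's argument.
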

\begin{proof}
Suppose that $x,\, y\in \Phi_{n}^{-1}(c)$, with $c\in V^{r_{n-1}, r_{n}}(0)$.  Then 
Corollary \ref{c:isgenericallyrad} implies that $x$ and $y$ are $K$-conjugate.  

Now suppose that $x,\, y\in \Phi_{n}^{-1}(c)\cap\fl$ with $c\in V^{r_{n-1}, r_{n}}(i)$ with $i>0$.  
Decompose $x$ and $y$ as 
$x=x_{\fz}+x_{\fl_{ss}}$ and $y=y_{\fz}+y_{\fl_{ss}}$, with 
$x_{\fz}, y_{\fz}\in\fz$ and $x_{\fl_{ss}},\, y_{\fl_{ss}}\in\fl_{ss}$.  
Then $\sigma(x)\cap \sigma(x_{\fk})$ are the coordinates of $x_{\fz}$ 
and similarly for $y$.  Since $\Phi_{n}(x)=\Phi_{n}(y)\in V^{r_{n-1}, r_{n}}(i)$, we know
$ \sigma(x)\cap \sigma(x_{\fk})=\sigma(y)\cap \sigma(y_{\fk})$.  
It follows that there exists $\dot{w}\in N_{K} (L\cap K)$ such that 
$\Ad(\dot{w})x_{\fz}=y_{\fz}$.  So without loss of generality, we may assume that 
$x_{\fz}=y_{\fz}$.  Since $x, y \in \fg(i)\cap \fl$,  then 
$x_{\fl_{ss}},\, y_{\fl_{ss}}\in\fl_{ss}(0)$, where 
$$
\fl_{ss}(0):=\{z\in\fl_{ss}:\; \sigma(z)\cap \sigma(z_{\fk})=\emptyset\}.  
$$
Let $\Phi_{\fl_{ss}}:\fl_{ss}\to \C^{\mbox{rk}(\fl_{ss}\cap \fk)}\times \C^{\mbox{rk}(\fl_{ss})}$ be 
the partial Kostant-Wallach map for $\fl_{ss}$.  Then $\Phi_{n}(x)=\Phi_{n}(y)$ implies 
that $\Phi_{\fl_{ss}}(x_{\fl_{ss}})=\Phi_{\fl_{ss}}(y_{\fl_{ss}}).$  But then Corollary \ref{c:isgenericallyrad} applied 
to $\fl_{ss}$ forces $x_{\fl_{ss}}$ and $y_{\fl_{ss}}$ to lie in the same $K\cap L_{ss}$-orbit.  
This completes the proof.  

\end{proof}

%WHAT ABOUT $\fl_{ss}$ in the case of $\fgl(n,\C)$. You have not stated the result about generic elements for that set. 

We now prove Theorem \ref{thm:closedKinfg}.  

\begin{proof}[Proof of Theorem \ref{thm:closedKinfg}]
Suppose that $x\in\fg(i)$ with $\Ad(K)\cdot x$ closed.  Then by 
Lemma \ref{lem:inclosure}, there exists an element $z\in\fl(i)\cap \overline{\Ad(K)\cdot x}$.  
But since $\Ad(K)\cdot x$ is closed, we conclude that  $\Ad(K)\cdot x=\Ad(K)\cdot z$.  

Conversely, suppose that $x\in\fl(i)$ and consider $\overline{\Ad(K)\cdot x}$.  
By Lemma \ref{lem:inclosure}, there exists $z\in\fl(i)$ with $K\cdot z$ closed and 
$K\cdot z\subset\overline{\Ad(K)\cdot x}\subset \Phi^{-1}(\Phi(x))$.  Therefore $\Ad(K)\cdot x=\Ad(K)\cdot z$ by Lemma 
\ref{lem:elementsinl}.  Thus, $\Ad(K)\cdot x$ is closed. 

\end{proof}

\subsection{The nilfibre of the partial Kostant-Wallach map in the orthogonal case}  \label{s:nilfibre}
Though there are many similarities between the $GL(n-1,\C)$-action 
on $\fgl(n,\C)$ and the $SO(n-1,\C)$-action on $\fso(n,\C)$, 
in this subsection we show that their $n$-strongly regular sets
are different.
In the case of $\fgl(n,\C)$, every fibre of the partial Kostant-Wallach map contains $n$-strongly regular elements.  This follows 
easily from Theorem 2.3 of \cite{KW1}.  However, this is not the case for $\fso(n,\C)$.  To see this, we need 
to study the nilfibre $\Phi_{n}^{-1}(0,0)$ of the orthogonal partial Kostant-Wallach map in more detail using 
Theorems \ref{thm:bigthm} and \ref{thm:Kostant}.  
\begin{thm}\label{thm:nilfibre}
Let $\Phi_{n}:\fso(n,\C)\to \C^{r_{n-1}}\oplus \C^{r_{n}}$ be the orthogonal 
partial Kostant-Wallach map $\Phi_{n}$ defined in Equation (\ref{eq:partial}).  

Case I: Suppose $n=2l$.  Then $\Phi_{n}^{-1}(0,0)=K\cdot\fn_{+}$ is
irreducible, 
where $\fn_{+}=[\fb_{+},\fb_{+}]$ and $Q_{+}=K\cdot\fb_{+}$ is the unique closed $K$-orbit in $\B$ (see part (2) of Proposition \ref{prop_soevenflag}).

Case II: Suppose $n=2l+1$.  Then $\Phi_{n}^{-1}(0,0)=K\cdot\fn_{+} \cup K\cdot \fn_{-}$ has two irreducible components,
where $\fn_{\pm}=[\fb_{\pm},\fb_{\pm}]$ and $Q_{\pm}=K\cdot\fb_{\pm}$, are the two closed $K$-orbits in $\B$ (see part (2) of Proposition \ref{prop_sooddflag}).
\end{thm}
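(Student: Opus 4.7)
The plan is to combine Corollary \ref{c:parabolics} with an invariant-theoretic analysis on a $\theta$-stable Borel subalgebra. Since $(0,0) \in V^{r_{n-1}, r_n}(\geq r_{n-1})$, the fibre $\Phi_n^{-1}(0,0)$ is contained in $\fg(r_{n-1})$. By Corollary \ref{c:parabolics}, $\fg(r_{n-1}) = Y_{Q_+} = \Ad(K)\fb_+$ in type $D$ and $Y_{Q_+}\cup Y_{Q_-} = \Ad(K)\fb_+ \cup \Ad(K)\fb_-$ in type $B$; hence every $x \in \Phi_n^{-1}(0,0)$ is $K$-conjugate to an element $y$ of $\fb_+$ or $\fb_-$, and the problem reduces to identifying $\Phi_n^{-1}(0,0)\cap \fb_\pm$.

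The first technical step is to show this intersection is exactly $\fn_\pm$. Writing $y = y_\fh + y_\fn \in \fb_\pm$, one has $\chi_n(y) = \chi_n(y_\fh)$ since $y$ is upper triangular in the ordering defining $\fb_\pm$. In type $B$, the generators $\mbox{tr}(y^{2k})$, $k=1, \dots, l$, of $\C[\fg]^G$ restrict on $y_\fh = \mbox{diag}[a_1, \dots, a_l, 0, -a_l, \dots, -a_1]$ to multiples of the power sums $\sum a_i^{2k}$, whose simultaneous vanishing forces every $a_i = 0$. In type $D$, the generators are $\mbox{tr}(y^{2k})$ for $k=1, \dots, l-1$ together with the Pfaffian, which restricts on $\fh$ to $\pm a_1 \cdots a_l$; the Pfaffian vanishing kills one $a_i$, after which the power sums kill the rest by Newton's identities. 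In either case $y_\fh = 0$, whence $y \in \fn_\pm$.

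Conversely, $\fb_\pm$ is $\theta$-stable (in type $B$ because $\theta$ is inner and fixes $\fh$, so $\theta$ preserves every Borel containing $\fh$; in type $D$ because $\fb_+$ is the standard Borel and the diagram automorphism permutes its simple root spaces), and hence so is $\fn_\pm$. By Remark \ref{r:isBorel}, $\fb_\pm \cap \fk$ is a Borel of $\fk$, and the $\theta$-stable decomposition $\fb_\pm = \fh \oplus \fn_\pm$ restricts to identify $\fn_\pm \cap \fk$ as its nilradical. Given $y \in \fn_\pm$, both $y$ and $\theta(y)$ lie in $\fn_\pm$, so $y_\fk = \frac{1}{2}(y+\theta(y)) \in \fn_\pm \cap \fk$ is nilpotent in $\fk$; together with $\chi_n(y) = 0$ (nilpotence of $y$) this gives $\Phi_n(y) = 0$. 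Combining the two inclusions, $\Phi_n^{-1}(0,0) = K\cdot\fn_+$ in type $D$ and $K\cdot\fn_+ \cup K\cdot\fn_-$ in type $B$.

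Each $K\cdot\fn_\pm$ is the image of the proper morphism $K \times_{K\cap B_\pm} \fn_\pm \to \fg$ (proper because $K/(K\cap B_\pm) = Q_\pm$ is projective), so it is closed and irreducible. In type $D$ this gives the unique irreducible component. In type $B$, to conclude that $K\cdot\fn_+$ and $K\cdot\fn_-$ are distinct components, I would argue by contradiction: if they were equal, then a principal nilpotent $e \in \fn_+$ would satisfy $\Ad(k)e \in \fn_- \subset \fb_-$ for some $k\in K$, so $\fb_-$ and $\Ad(k)\fb_+$ would be two Borels containing the regular nilpotent $\Ad(k)e$, forcing $\Ad(k)\fb_+ = \fb_-$ and hence $Q_+ = Q_-$, contradicting Proposition \ref{prop_sooddflag}(2). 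The main obstacle is the invariant-theoretic step identifying $\Phi_n^{-1}(0,0) \cap \fb_\pm$ with $\fn_\pm$, particularly the role of the Pfaffian in type $D$.
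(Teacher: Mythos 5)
Your proof is correct, and while it shares the paper's outer skeleton, its central mechanism is different. The paper, like you, starts from $\Phi_n^{-1}(0,0)\subset\fg(r_{n-1})\cap\mathcal{N}$ together with Corollary \ref{c:parabolics}, and it also obtains closedness and irreducibility of $K\cdot\fn_{\pm}$ from properness of the map $K\times_{K\cap B_{\pm}}\fn_{\pm}\to\fg$ (via the Grothendieck resolution); but instead of identifying the fibre set-theoretically, it shows $\Ad(K)\fn_{\pm}$ is an irreducible component by a dimension count, using generic finiteness of $\mu$ on $K\times_{K\cap B}\fn$ to get $\dim \Ad(K)\fn=\dim Y_{Q}-r_{n}=\dim\fg-r_{n-1}-r_{n}$, and the flatness of $\Phi_n$ (Proposition \ref{prop:flat}) to know this equals $\dim\Phi_n^{-1}(0,0)$; every component is then shown to lie in some $\Ad(K)\fn_{\pm}$ and hence to coincide with it. You replace the flatness/dimension argument by the direct identification $\Phi_n^{-1}(0,0)\cap\fb_{\pm}=\fn_{\pm}$, which makes the equality of sets explicit and independent of Proposition \ref{prop:flat}; note that your forward inclusion can be compressed, since $\chi_n(y)=0$ says $y$ is nilpotent and a nilpotent element of a Borel containing $\fh$ has vanishing $\fh$-component, so the power-sum and Pfaffian computation, though correct, is not really needed. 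Your regular-nilpotent argument in type $B$ is a genuine addition, as the paper does not spell out why $K\cdot\fn_{+}$ and $K\cdot\fn_{-}$ are distinct components. One small adjustment: to get exactly two components you must exclude a containment $K\cdot\fn_{+}\subseteq K\cdot\fn_{-}$ (or the reverse), not merely equality; your argument does this verbatim, since a regular nilpotent $e\in\fn_{+}$ lying in $K\cdot\fn_{-}$ already forces $\Ad(k)\fb_{+}=\fb_{-}$ and hence $Q_{+}=Q_{-}$, contradicting part (2) of Proposition \ref{prop_sooddflag}.
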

\begin{proof}
Let $Q=K\cdot \fb$ be a closed $K$-orbit.  Let $\fn=[\fb,\fb]$ be the nilradical 
of $\fb$.  We first show that $\Ad(K)\cdot\fn$ is an irreducible component of $\Phi_{n}^{-1}(0,0)$.  
Since $Q$ is closed, $\fb$ is $\theta$-stable by Proposition 4.12 of \cite{CEexp}.
Thus, $\fb\cap\fk$ is a Borel subalgebra of $\fk$ with nilradical $\fn\cap\fk$.  
It follows that for any $x\in\fn$, we have $\Phi_{n}(x)=(0,0).$  By the $K$-equivariance 
of $\Phi_{n}$, $\Ad(K)\cdot \fn\subset \Phi_{n}^{-1}(0,0)$.    

Recall the Grothendieck resolution $\widetilde{\fg}=\{(x,\fb): x\in\fb\}\subset\fg\times\B$ and 
the morphisms $\pi:\widetilde{\fg}\to \B$, $\pi(x,\fb)=\fb$ and $\mu:\widetilde{\fg}\to \fg$, $\mu(x,\fb)=x$. 
 Corollary 3.1.33 of \cite{CG} gives a $G$-equivariant isomorphism 
$\widetilde{\fg}\cong G\times_{B}\fb$.  Under this isomorphism 
$\pi^{-1}(Q)$ is identified 
with the closed subvariety $K\times_{K\cap B}\fb\subset G\times_{B}\fb$.  
The closed subvariety $K\times_{K\cap B}\fn\subset K\times_{K\cap B}\fb$ maps surjectively under $\mu$
to $\Ad(K) \fn$.  Since $\mu$ is proper, $\Ad(K) \fn$ is closed and irreducible.  
By Proposition 3.2.14 of \cite{CG}, the restriction of $\mu$ to 
$K\times_{K\cap B}\fn$ generically has finite fibres.  Thus, the same reasoning that we used in Equation (\ref{eq:YQdim}) along with Propositions \ref{prop:dimYQ} and \ref{prop_dimgl} shows that 
\begin{equation*}\label{eq:nulldim}
\begin{split}
\dim \Ad(K) \fn&=\dim(K\times_{K\cap B}\fn)\\
&=\dim(Y_{Q})-r_{n}\\
&=\dim(\fg(\geq r_{n-1}))-r_{n}\\
&=\dim(\fg)-r_{n-1}-r_{n}\\
&=\dim\Phi_{n}^{-1}(0,0). 
\end{split}
\end{equation*}
  Thus, 
$\Ad(K)\cdot \fn$ is an irreducible component of $\Phi_{n}^{-1}(0,0)$. 

We now show that every irreducible component of $\Phi_{n}^{-1}(0,0)$ is of the form $\Ad(K) \fn$. 
It follows from definitions
that $\Phi_{n}^{-1}(0,0)\subset \fg(r_{n-1})\cap \mathcal{N}$, where $\mathcal{N}\subset\fg$ is the nilpotent cone in $\fg$.  
Thus, if $\X$ is an irreducible component of $\Phi_{n}^{-1}(0,0)$, 
then $\X\subset \Ad(K) \fn$ by Equations (\ref{eq:oneclosed}) and (\ref{eq:twoclosed}) from Corollary \ref{c:parabolics}.  But then 
$\X=\Ad(K) \fn$ by Proposition \ref{prop:flat}. 
\end{proof}

We use Theorem \ref{thm:nilfibre} to study 
$\Phi_{n}^{-1}(0,0)$ in more detail.  In \cite{CEeigen}, we studied the interaction between the set of $n$-strongly regular 
  elements for the pair $(\fg=\fgl(n,\C),  \, \fk=\fgl(n-1,\C))$ and the nilfibre of the corresponding partial Kostant-Wallach map.  
% The geometry of the nilfibre appears to be very different in the orthogonal case.  
We now show that unlike in the case of $\fgl(n,\C)$, there are no $n$-strongly regular elements in the nilfibre 
of the partial Kostant-Wallach map for the orthogonal Lie algebra $\fso(n,\C)$.  The key observation is the following 
proposition,  which can be viewed as an extension of Proposition 3.8 in \cite{CEKorbs}. 

\begin{prop}\label{prop:overlaps}  
Let $n>3$,
let $\fg=\fso(n,\C)$, and let $K=SO(n-1,\C)$.  Let $\fb\subset\fg$ be a Borel subalgebra and suppose that the
$K$-orbit $K\cdot\fb$ is closed in $\B$.  Let $\fn=[\fb,\fb]$ be the nilradical of $\fb$.  Then
%, and $\fz_{\fg}(\fn)$ be the centralizer of 
%$\fn$ is $\fg$ and similarly let $\fz_{\fk}(\fn\cap\fk)$ be the centralizer of %$\fn\cap\fk$ in $\fk$.
\begin{equation}\label{eq:fncentralizers}
\fz_{\fk}(\fn\cap\fk)\cap\fz_{\fg}(\fn)\neq 0,
\end{equation}
where $\fz_{\fg}(\fn)$ is the centralizer of $\fn$ in $\fg$, and $\fz_{\fk}(\fn\cap\fk)$ is the centralizer
of $\fn\cap\fk$ in $\fk$. 
\end{prop}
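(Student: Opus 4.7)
The plan is to produce a nonzero element of $\fk\cap\fz_\fg(\fn)$; since $\fn\cap\fk\subset\fn$, any such element automatically centralizes $\fn\cap\fk$ in $\fk$ and therefore lies in the claimed intersection. By $K$-equivariance, I may reduce to the explicit representatives of closed $K$-orbits given by Propositions~\ref{prop_sooddflag} and~\ref{prop_soevenflag}: $\fb=\fb_+$ when $\fg=\fso(2l,\C)$, and $\fb\in\{\fb_+,\fb_-\}$ when $\fg=\fso(2l+1,\C)$. In each case $\fb$ is $\theta$-stable by Proposition~4.12 of~\cite{CEexp}, so $\theta$ preserves both $\fn$ and $\fz_\fg(\fn)$.

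The first main step is to identify $\fz_\fg(\fn)$. Being $\fb$-stable (and therefore $\fh$-stable), it decomposes as a sum of $\fh$-weight spaces; the standard commutator computations with $[h,e_\beta]=\beta(h)e_\beta$ and $[e_{-\alpha},e_\alpha]=h_\alpha$ rule out both the Cartan and the negative root components, so $\fz_\fg(\fn)\subset\fn$. A positive root vector $e_\alpha$ then lies in $\fz_\fg(\fn)$ exactly when $\alpha+\beta\notin\Phi$ for every $\beta\in\Phi^+$, i.e.\ when $\alpha$ is maximal in the partial order on $\Phi^+$. For simple $\fg$ (i.e.\ $n\geq 5$) this forces $\fz_\fg(\fn)=\fg_{\alpha_0}$, one-dimensional and spanned by the highest root vector; for $n=4$, $\fg=\fso(4,\C)\cong\fsl(2,\C)\oplus\fsl(2,\C)$ is not simple and $\fz_\fg(\fn)=\fg_{\eps_1+\eps_2}\oplus\fg_{\eps_1-\eps_2}$ is two-dimensional.

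The second main step is to show the $\theta$-fixed part of $\fz_\fg(\fn)$ is nonzero. Since $\theta$ preserves the positive system associated to $\fb$, it fixes the maximal root(s) of each simple component, making them imaginary for simple $\fg$. In type $B$, writing $\theta=\Ad(t)$ with $t^{\alpha_i}=1$ for $i<l$ and $t^{\alpha_l}=-1$, I would verify in each of the three subcases ($\fb_+$, where $\alpha_0=\eps_1+\eps_2=\alpha_1+2\alpha_2+\cdots+2\alpha_l$; $\fb_-$ with $l\geq 3$, where $s_{\alpha_l}$ fixes $\alpha_0=\eps_1+\eps_2$; and $\fb_-$ with $l=2$, where the new highest root becomes $\alpha_1$) that the $\alpha_l$-coefficient is even, so $t^{\alpha_0}=1$ and $\fg_{\alpha_0}\subset\fk$. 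In type $D$ with $l\geq 3$, Example~\ref{ex:roottypes} directly gives that $\alpha_0=\eps_1+\eps_2$ is compact imaginary. For $n=4$, neither $\eps_1+\eps_2$ nor $\eps_1-\eps_2$ is imaginary, but the two are exchanged by $\theta_4$, so $e_{\eps_1+\eps_2}+\theta(e_{\eps_1+\eps_2})$ is a nonzero $\theta$-fixed element of $\fz_\fg(\fn)$.

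The main obstacle is the case analysis forced by the non-uniformity between types $B$ and $D$, the two closed orbits in type $B$, and the non-simple case $D_2$; however, each verification reduces either to a parity check on a single coefficient in a simple-root expansion of the relevant highest root or to an appeal to Example~\ref{ex:roottypes}, so the argument remains short.
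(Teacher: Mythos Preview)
Your proof is correct and follows essentially the same approach as the paper: reduce by $K$-equivariance to the standard representatives, observe that the highest root vector lies in $\fz_\fg(\fn)$, and verify that it (or, for $n=4$, its $\theta$-symmetrization) is $\theta$-fixed. The paper cites Example~\ref{ex:roottypes} directly to see that $\eps_1+\eps_2$ is compact imaginary in all relevant cases, whereas you redo this via a parity check on the $\alpha_l$-coefficient; your step of identifying the full centralizer $\fz_\fg(\fn)$ is more than is needed (the paper only uses that the highest root vector is central), and your careful treatment of the $\fb_-$, $l=2$ case---where the highest root is $\eps_1-\eps_2$ rather than $\eps_1+\eps_2$---is a minor sharpening of a point the paper glosses over, though it does not affect the conclusion.
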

\begin{proof}
Consider a closed $K$-orbit $Q$ in $\B$.  By $K$-equivariance, it suffices to show 
Equation (\ref{eq:fncentralizers}) for a representative $\fb$ of $Q$.  By part (2) of Propositions \ref{prop_sooddflag}
and \ref{prop_soevenflag}, we can assume that the standard diagonal Cartan subalgebra $\fh$ is in $\fb$.  
Let $\phi \in \Phi^{+}(\fg, \fh)$ be the highest root of $\fb$.  We claim for $n>4$ that $\phi$ is compact imaginary.  It then follows that the root space
$$
\fg_{\phi}\subset \fz_{\fk}(\fn\cap\fk)\cap\fz_{\fg}(\fn).
$$  

Suppose first that $\fg=\fso(2l,\C)$. By part (2) of Proposition \ref{prop_soevenflag}, we can 
assume that $\fb=\fb_{+}$.  The highest root is then $\epsilon_{1}+\epsilon_{2}$, which is compact imaginary 
for $l>2$ (Example \ref{ex:roottypes}.)  If $\fg=\fso(2l+1,\C)$, then by part (2) of Proposition \ref{prop_sooddflag}, we can assume 
that $\fb=\fb_{+}$ or $\fb=\fb_{-}=s_{\alpha_{l}}(\fb_{+})$.  In both cases, the highest root is $\epsilon_{1}+\epsilon_{2}$, 
which is compact imaginary (Example \ref{ex:roottypes}).  

If $\fg=\fso(4,\C)$, then $\phi=\epsilon_{1}+\epsilon_{2}$ is 
complex $\theta$-stable.  Since $\fn$ is abelian in this case,
 $(\fg_{\phi} \oplus \fg_{\theta(\phi)})^{\theta}\subset \fz_{\fk}(\fn\cap\fk)\cap \fz_{\fg}(\fn)$.

\end{proof}

\begin{cor}\label{c:nonsreg}
Let $n>3$, and let $\Phi_{n}:\fso(n,\C)\to \C^{r_{n-1}}\oplus \C^{r_{n}}$ be the orthogonal partial Kostant-Wallach map.    
Then $\Phi_{n}^{-1}(0,0)$ contains no $n$-strongly regular elements. 
\end{cor}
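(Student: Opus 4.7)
The plan is to combine Theorem \ref{thm:nilfibre} with Proposition \ref{prop:overlaps}. Theorem \ref{thm:nilfibre} tells us that every element of $\Phi_n^{-1}(0,0)$ is $K$-conjugate to an element of $\fn:=[\fb,\fb]$, where $\fb$ is one of the (one or two) Borel subalgebras whose $K$-orbit in $\B$ is closed. The set $\fg_{nsreg}$ is cut out by the condition $\fz_\fk(x_\fk)\cap \fz_\fg(x)=0$, which is manifestly $K$-invariant: indeed, $\Ad(k)$ commutes with $\theta$ for $k\in K$, so $(\Ad(k)x)_\fk=\Ad(k)x_\fk$, and centralizers transport by $\Ad(k)$. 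Hence it suffices to prove that no element of $\fn$ is $n$-strongly regular.

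So first I would fix a closed $K$-orbit $Q=K\cdot \fb$ and note that $\fb$ is $\theta$-stable (by Proposition 4.12 of \cite{CEexp}), hence so is its nilradical $\fn$. This gives the decomposition $\fn=(\fn\cap\fk)\oplus(\fn\cap\fg^{-\theta})$, and consequently $x_\fk\in \fn\cap\fk$ for every $x\in\fn$. Therefore any element of $\fk$ that centralizes all of $\fn\cap\fk$ centralizes $x_\fk$, and any element of $\fg$ that centralizes all of $\fn$ centralizes $x$. This yields the inclusion
\[
\fz_\fk(\fn\cap\fk)\cap \fz_\fg(\fn)\ \subseteq\ \fz_\fk(x_\fk)\cap \fz_\fg(x)\qquad \text{for every } x\in\fn.
\]

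Now I would invoke Proposition \ref{prop:overlaps}, which asserts (for $n>3$) that the left-hand side of the above inclusion is nonzero. Consequently $\fz_\fk(x_\fk)\cap \fz_\fg(x)\ne 0$ for every $x\in\fn$, so by Definition \ref{dfn:nsreg} no element of $\fn$ is $n$-strongly regular. Combining this with the two reductions in the first paragraph (reduction to $\fn_+$ or $\fn_\pm$ via Theorem \ref{thm:nilfibre}, and $K$-invariance of the $n$-strongly regular locus) finishes the proof.

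There is no real obstacle here; the work is already done in Theorem \ref{thm:nilfibre} and Proposition \ref{prop:overlaps}. The only subtlety to verify carefully is the $\theta$-stability of $\fn$ (so that $x_\fk\in\fn$) and the $K$-invariance of $\fg_{nsreg}$, both of which are immediate from the definitions together with the fact that the closed $K$-orbits are represented by $\theta$-stable Borel subalgebras.
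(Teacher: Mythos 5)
Your proof is correct and follows essentially the same route as the paper: reduce via Theorem \ref{thm:nilfibre} to elements of the nilradical $\fn$ of a Borel with closed $K$-orbit, observe that $\theta$-stability of $\fn$ gives $x_{\fk}\in\fn\cap\fk$ so that $\fz_{\fk}(\fn\cap\fk)\cap\fz_{\fg}(\fn)\subseteq\fz_{\fk}(x_{\fk})\cap\fz_{\fg}(x)$, and conclude by Proposition \ref{prop:overlaps}. The only difference is cosmetic: you make explicit the $K$-invariance of $\fg_{nsreg}$ and the $\theta$-stability of $\fn$, which the paper leaves implicit.
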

\begin{proof}
Suppose $x \in \Phi_{n}^{-1}(0,0)$, so by Theorem \ref{thm:nilfibre},
$x$ is contained in $\fn$, the nilradical of a Borel subalgebra
$\fb$ with $K\cdot \fb$ closed.
    By Proposition
\ref{prop:overlaps}, there is
a nonzero element $y$ of $\fz_{\fk}(\fn\cap \fk) \cap \fz_{\fg}(\fn)$.
Then $y\in\fz_{\fk}(x_{\fk})\cap \fz_{\fg}(x) $, so $x$ is
not $n$-strongly regular.
\end{proof}

\begin{rem}\label{r:lowdim}
The assertion of the corollary is false
 for $n=3$.  In this case,  $\fso(3,\C)\cong \fsl(2,\C)$ and $\fk=\fh\subset \fsl(2,\C)$, 
where $\fh$ is the standard Cartan subalgebra of $\fsl(2,\C)$.  
In this case, it follows by Proposition 3.11 from \cite{CEKorbs}
that each irreducible component contains strongly regular elements.
\end{rem} 

%For $n=3$ there are strongly regular elements in $\Phi_n^{-1}(0,0)$,
 %essentially because $\fso(3,\C)\cong \fsl(2,\C)$.

The following result is analogous to Proposition 3.11 of \cite{CEeigen}.  
We let $I_{n}$ be the ideal of $\C[\fso(n,\C)]$ generated
by elements of $\C[\fso(n,\C)]^{SO(n-1,\C)}$ of positive degree.  
\begin{cor}\label{c:notrad}
The ideal $I_{n}$ is radical if and only if $n=3$.  
\end{cor}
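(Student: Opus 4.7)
The plan is to control radicality of $I_n$ via the Jacobian criterion, exploiting the key numerical coincidence that by Proposition~\ref{prop:flat}(3) the variety $V(I_n)=\Phi_n^{-1}(0,0)$ is equidimensional of codimension exactly $r_{n-1}+r_n$, which is precisely the number of generators of $I_n$. Thus we are in the complete intersection dimension count, and radicality is governed by whether the differentials of the generators are linearly independent at smooth points of $V(I_n)$, which by Theorem~\ref{thm:Kostant} is the same as the presence of $n$-strongly regular elements at smooth points.

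For the case $n>3$, I will argue by contradiction. Assume $I_n$ is radical, so that $I_n=I(V(I_n))$. Let $x$ be any smooth point of $V(I_n)$; such points exist and are dense in each irreducible component of $V(I_n)$. The conormal space of $V(I_n)$ in $\fg$ at $x$ has dimension equal to the codimension $r_{n-1}+r_n$ and is spanned by the differentials $df(x)$ as $f$ ranges over a generating set of $I(V(I_n))=I_n$. Since $I_n$ has exactly $r_{n-1}+r_n$ generators, the differentials $df_{n-1,1}(x),\dots,df_{n-1,r_{n-1}}(x),df_{n,1}(x),\dots,df_{n,r_n}(x)$ must be linearly independent at $x$. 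By Theorem~\ref{thm:Kostant}, $x$ is then $n$-strongly regular, contradicting Corollary~\ref{c:nonsreg}, which asserts that $\Phi_n^{-1}(0,0)$ contains no $n$-strongly regular elements for $n>3$. Hence $I_n$ is not radical.

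For $n=3$, I will verify radicality by direct computation. Here $\fso(3,\C)\cong\fsl(2,\C)$ and $\fk=\fso(2,\C)$ coincides with the standard Cartan subalgebra $\fh$, since $\dim\fk=1=\dim\fh$ and $\fh\subset\fk$ by Section~\ref{ss:symmetricreal}. Writing a general element of $\fsl(2,\C)$ in coordinates as
\[
x=\begin{pmatrix}a & b\\ c & -a\end{pmatrix},
\]
the projection $x_{\fk}$ is the diagonal, so $\C[\fk]^K$ is generated by $a$, while $\C[\fg]^G$ is generated by the Casimir $a^2+bc$. Therefore $I_3=(a,\,a^2+bc)=(a,bc)$ in $\C[a,b,c]$, and the quotient $\C[a,b,c]/(a,bc)\cong\C[b,c]/(bc)$ is reduced because $(bc)=(b)\cap(c)$ is the intersection of two distinct prime ideals of $\C[b,c]$. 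Hence $I_3$ is radical.

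The main technical point is the first direction, specifically producing a smooth point of $V(I_n)$ so that the conormal space argument can be applied; this follows from the general fact that the smooth locus of any variety is open and dense in each irreducible component, combined with the precise equidimensionality statement of Proposition~\ref{prop:flat}(3) that matches the number of generators of $I_n$ to the codimension of $V(I_n)$.
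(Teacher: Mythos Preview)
Your argument is correct. For $n>3$ it is essentially the paper's approach: both hinge on the Jacobian/complete-intersection criterion (the paper cites Theorem 18.15(a) of \cite{Eis}, while you unpack this via the conormal space at a smooth point), then invoke Theorem~\ref{thm:Kostant} to identify the locus of independent differentials with $\fg_{nsreg}$, and conclude via Corollary~\ref{c:nonsreg}. The only substantive difference is the $n=3$ direction: the paper uses the same Eisenbud criterion together with Remark~\ref{r:lowdim} (which cites Proposition~3.11 of \cite{CEKorbs}) to produce $n$-strongly regular elements in each component of $\Phi_3^{-1}(0,0)$, whereas you bypass this entirely with a direct computation in $\fsl(2,\C)$ showing $\C[a,b,c]/(a,bc)$ is reduced. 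Your route for $n=3$ is more self-contained and avoids the external reference; the paper's route has the virtue of treating both directions uniformly via the biconditional Jacobian criterion.
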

\begin{proof}
By Theorem 18.15 (a) of \cite{Eis}, the ideal $I_{n}$ is radical
if and only if the set of differentials $\{df_{i,j}(x): \, j=1,\dots, r_{i}, \, i=n-1, n\}$ is linearly independent on an open, dense subset 
of each irreducible component of $\Phi_{n}^{-1}(0,0)$.  It follows from Definition \ref{dfn:nsreg} and Theorem \ref{thm:Kostant} that $I_{n}$ is radical 
if and only if each irreducible component of $\Phi_{n}^{-1}(0,0)$ contains $n$-strongly regular elements.  
But it follows from Corollary \ref{c:nonsreg} and the case of $\fso(3,\C)$ in Remark \ref{r:lowdim}
 that each irreducible component of $\Phi_{n}^{-1}(0,0)$ contains $n$-strongly regular elements if and only if $n=3$.     
\end{proof}

%\section{Kostant's Theorem for the $K$ action on $\fg$}

%(I GUESS THIS IS REALLY A SCHOLIUM.) 

Note that we have derived results concerning the $n$-strongly regular
set without using a slice, in contrast to the case of $\fgl(n,\C)$
studied by Kostant and Wallach \cite{KW1}, Theorem 2.3.

\begin{rem}\label{r:nosreg}
Consider the orthogonal Kostant-Wallach map $\Phi$ defined in (\ref{eq:orthoKW}).  It follows 
from Corollary \ref{c:nonsreg} that the nilfibre $\Phi^{-1}(0,\dots, 0)$ contains no strongly regular elements.
This is very different than the case of $\fgl(n,\C)$ studied extensively in \cite{CEKorbs}. 

\end{rem}

%\begin{rem}\label{r:slice}
%It is interesting to note that Theorem \ref{thm:Kostant} holds 
%for the $K$-action on $\fg$ even though there does not appear to be a natural 
%cross-section to the $n$-strongly regular elements.  In Kostant's original proof 
%of (\ref{eq:regdiffs}) a major role is played by the slice to the principal nilpotent orbit in 
%$\fg$.  The canonical presentation of this slice is $e+\fz_{\fg}(f)$, where $e$ is a representative of the principal nilpotent 
%orbit and $f$ is such that $\{e, f, \, [e,f]\}$ span a subalgebra of $\fg$ isomorphic to $\fsl(2,\C)$.  Kostant shows that the 
%generators of the ring $\C[\fg]^{G}$, $\psi_{1},\dots, \psi_{r}$ are coordinates on the slice $e+\fz_{\fg}(f)$ and 
%that $e+\fz_{\fg}(f)$ is a cross-section to $\fg_{reg}$.  As we saw in Corollary \ref{c:nonsreg},
%there are no $n$-strongly regular elements in the nilfibre of the partial Kostant-Wallach map for $\fg=\fso(n,\C)$.  Even 
%for $\fg=\fgl(n,\C)$, there does not appear to be a cross section to $\fg_{nsreg}$ that is transverse to a generic 
%$K$-orbit in $\Phi_{n}^{-1}(0,0)$.  It is interesting that we can still understand the set $\fg_{nsreg}$ using Theorems \ref{thm:regelts}
%and \ref{thm:gzero}. %despite the lack of a slice or the lack of $n$-strongly regular elements in $\Phi_{n}^{-1}(0,0)$ for $\fg=\fso(n,\C)$.  

%\end{rem}

\section{appendix}
In the appendix, we prove a general result which implies Proposition 
\ref{prop:flat}.   The proof is an adaptation of the proof of Proposition
2.3 from \cite{CEeigen}.

%%%%%We present this proof here because of its potential applications to geometric representation theory, in particular 
%to the geometric construction of generalized Harish-Chandra modules for certain pairs $(\fg, H)$.   

\begin{thm}\label{thm:flatnessgen}
Let $(G,H)$ be a spherical pair such that 

$$
\dim\B=\dim \fh^{\perp}-\dim \fh^{\perp}// H.  
$$
(cf Equation (\ref{eq:numerology})).
Then $\Psi: \fh^{\perp}\to \fh^{\perp}//H$ is flat.  
\end{thm}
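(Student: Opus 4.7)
The plan is to reduce flatness of $\Psi$ to a dimension bound on the single null fibre $\Psi^{-1}(0)$, using miracle flatness together with $\C^{*}$-equivariance, and then to establish the bound via conormal geometry of $H$-orbits in $\fh^{\perp}$. Since $\fh^{\perp}$ is an affine space it is smooth (hence Cohen-Macaulay), and $\fh^{\perp}//H$ is smooth because $\C[\fh^{\perp}]^{H}$ is a polynomial ring by Knop's theorem (cited in the proof of Proposition \ref{prop:flat}). By the miracle flatness criterion, $\Psi$ is therefore flat if and only if every irreducible component of every fibre has dimension $\dim \fh^{\perp}-\dim \fh^{\perp}//H$, which equals $\dim \B$ by hypothesis. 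Choosing homogeneous generators $f_{1}, \ldots, f_{r}$ of $\C[\fh^{\perp}]^{H}$ with $r = \dim \fh^{\perp}//H$, the morphism $\Psi=(f_{1}, \ldots, f_{r})$ is $\C^{*}$-equivariant for the scaling action on $\fh^{\perp}$ and the weighted scaling on $\C^{r}$; in particular $\Psi^{-1}(0)$ is a conic subvariety and the closure of every $\C^{*}$-orbit in $\C^{r}$ contains $0$. Upper semicontinuity of fibre dimension along such a $\C^{*}$-orbit gives $\dim C \le \dim \Psi^{-1}(0)$ for every irreducible component $C$ of every fibre of $\Psi$; combined with the universal lower bound $\dim C \ge \dim \B$ from Krull's principal ideal theorem, flatness of $\Psi$ reduces to the single inequality $\dim \Psi^{-1}(0) \le \dim \B$.

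To bound the null cone, I would exploit the conormal variety
$$
\Lambda = \{(x, \xi) \in \fh^{\perp} \times \fh^{\perp} \,:\, [x, \xi] \in \fh^{\perp}\},
$$
which under the Killing-form identification $(\fh^{\perp})^{*} \cong \fh^{\perp}$ is the union over $x$ of the conormal bundles $N^{*}_{x}(H \cdot x)$ to the $H$-orbits. For $x \in \fh^{\perp}_{reg}$, the hypothesis together with Panyushev's formulas in Equations (\ref{eq:zero}) and (\ref{eq:first}) gives $\dim(H \cdot x)=\dim \B$ and so $\dim N^{*}_{x}(H \cdot x)=r$; in particular the component of $\Lambda$ dominating $\fh^{\perp}$ under the first projection has dimension $\dim \fh^{\perp}+r$. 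The adaptation of Proposition 2.3 of \cite{CEeigen} would then combine this conormal estimate with the Hilbert-Mumford description of $\Psi^{-1}(0)$ as the union, over destabilizing one-parameter subgroups $\lambda$ of $H$, of the positive-weight subspaces $\fh^{\perp}_{>0}(\lambda)$, using sphericity of $(G, H)$ to ensure that the associated Kempf-Hesselink-Ness stratification has finitely many pieces.

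The main obstacle will be the stratum-by-stratum dimension bound. For each destabilizing $\lambda$ with associated parabolic $P(\lambda) \subseteq H$, the corresponding stratum of $\Psi^{-1}(0)$ fibres over an $H$-orbit on $H/P(\lambda)$ with fibre contained in $\fh^{\perp}_{>0}(\lambda)$, and bounding the total dimension by $\dim \B$ is precisely where the hypothesis (\ref{eq:numerology}) enters: it forces the generic $H$-orbit on $\fh^{\perp}$ to be maximal-dimensional in the sense of Panyushev, thereby preventing the null cone from acquiring excess-dimensional components. The delicate point is to balance the dimensions of the partial flag piece and the positive-weight fibre for each non-generic destabilizing subgroup, and this is the technical heart of the appendix's adaptation of \cite{CEeigen}.
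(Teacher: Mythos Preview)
Your reduction to bounding $\dim \Psi^{-1}(0)\le \dim\B$ is correct and matches the paper exactly: miracle flatness plus $\C^*$-equivariance and upper semicontinuity reduce everything to the null fibre.

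Where you diverge is in the dimension bound itself, and this is a genuine gap. You propose to work with conormals to $H$-orbits in $\fh^{\perp}$ and a Kempf--Hesselink--Ness stratification of the null cone, and you admit that the stratum-by-stratum bound is ``the technical heart'' you have not supplied. The paper does something much simpler and in a different place: it uses conormals to $H$-orbits on the \emph{flag variety} $\B$, not on $\fh^{\perp}$. Concretely, let $Q_1,\dots,Q_s$ be the finitely many $H$-orbits on $\B$ (this is exactly where sphericity enters) and set $Z_i=\overline{T^*_{Q_i}\B}$; each $Z_i$ is Lagrangian, hence of dimension $\dim\B$. The moment map $\mu:T^*\B\to\fg$ is proper. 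The key observation is that $\Psi^{-1}(0)\subset\mu(\cup_i Z_i)$: for any $x\in\Psi^{-1}(0)$, the restriction $f|_{\fh^\perp}$ of every positive-degree $f\in\C[\fg]^G$ lies in the maximal ideal of $\C[\fh^\perp]^H$, so $f(x)=0$ and $x$ is nilpotent; hence $x\in\fn=[\fb,\fb]$ for some $\fb\in\B$, and since $x\in\fh^\perp$ one has $(\fb,x)\in T^*_{Q_i}\B$ where $Q_i=H\cdot\fb$. Properness of $\mu$ then forces any irreducible component $C$ of $\Psi^{-1}(0)$ into a single $\mu(Z_i)$, giving $\dim C\le\dim Z_i=\dim\B$.

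So you have the right skeleton but the wrong conormal variety. The paper's argument needs no instability analysis at all: the finiteness input is the finiteness of $H$-orbits on $\B$, and the bridge from $\Psi^{-1}(0)$ to $\B$ is the one-line observation that $\C[\fg]^G|_{\fh^\perp}\subset\C[\fh^\perp]^H$, which forces null-cone elements to be nilpotent in $\fg$. Your conormal variety $\Lambda\subset\fh^\perp\times\fh^\perp$ and the Hilbert--Mumford machinery are not needed and, as you noticed, do not obviously close up without further work.
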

\begin{proof}
We first show that $\Psi^{-1}(0)$ is equidimensional of dimension
$\dim \fh^{\perp}-\dim \fh^{\perp}// H.$   Let $C$ be an irreducible
component of $\Psi^{-1}(0)$.  By standard results, $\dim(C) \ge
\dim \fh^{\perp}-\dim \fh^{\perp}// H.$  Label the finite number
of $H$-orbits on $\B$ by $Q_1, \dots, Q_s$.  Let 
$Z=\cup_{i=1}^s \overline{T_{Q_i}^{*}(\B)}$ and note that the irreducicible
components of $Z$ are the subvarieties $Z_i := \overline{T_{Q_i}^{*}(\B)}$,
and also that $\dim Z_i = \dim \B$ for $i=1, \dots, s$.   Recall
the standard identification $T^*\B = \{ (\fb, x) \in \B \times \fg :
x \in [\fb, \fb]\}$ and let $\mu:T^*\B \to \fg$ be the moment map,
$\mu(\fb, x)=x$.   

We claim that $\Psi^{-1}(0) \subset \mu(Z)$.   Indeed, by Theorem
6 of \cite{Pancoiso} , $\C[\fh^\perp]^H=\C[g_1, \dots, g_k]$
is a polynomial ring in $k$ generators, which can be taken 
to be homogeneous.   Further, the morphism $\Psi:\fh^{\perp} \to \fh^\perp//H$
may be identified with $(g_1, \dots, g_k):\fh^{\perp} \to \C^k$.
For $f\in \C[\fg]^G$ of positive degree, note that $f|_{\fh^\perp}
\in \C[\fh^\perp]^H$, so $f|_{\fh^\perp}$ is a polynomial of strictly
positive degree in the variables $g_1, \dots, g_k$.   By the
above identification, $g_i(x)=0$ for each $x\in \Psi^{-1}(0)$,
so $f(x)=0$.   By Proposition 16 of \cite{Kostant63}, it follows
that $x$ is nilpotent, and hence lies in $\fn:=[\fb, \fb]$, the
nilradical of a Borel subalgebra $\fb$.   Thus, if $Q_i=H\cdot \fb$,
then $(\fb, x)\in Z_i$, and $x=\mu(\fb,x)\in \mu(Z)$.

Since $\mu$ is proper, it follows that $C \subset \mu(Z_i)$ for some
$i$.  Hence, $\dim C \le \dim Z_i = \dim \B = \dim \fh^{\perp} - 
\dim \fh^{\perp}//H,$ so $\dim C = \dim \fh^{\perp} - \dim \fh^{\perp}//H.$

Now for $x\in \fh^{\perp}$,
let $d_x$ be the maximum of the dimension
of the irreducible components of $\Psi^{-1}( \Psi(x))$.   Since the
functions $g_1, \dots, g_k$ are homogeneous, scalar multiplication
by $\lambda \in \C^*$ induces an isomorphism $\Psi^{-1}(\Psi(x))
\cong \Psi^{-1}(\Psi(\lambda x)),$ so $d_x = d_{\lambda x}.$   By
upper semi-continuity of dimension, the set $\{ y\in \fh^{\perp} :
d_y \ge d \}$ is closed for each integer $d$ (Proposition 4.4
of \cite{Hum}).  Hence,  $d_y \le d_0 = \dim \fh^{\perp} - \dim \fh^{\perp}//H$
for all $y\in \fh^{\perp}$.  It follows that $d_y = \dim \fh^{\perp} - \dim \fh^{\perp}//H$ for all $y\in \fh^{\perp}$.   Hence, $\Psi$ is flat by the corollary
to Theorem 23.1 of \cite{Mat}.
\end{proof}

%\begin{appendix}
%BLah is vl

%\end{appendix}

%We first 
%briefly recall Kostant's result 
%(THIS SHOULD BE MOVED UP TO THE PREVIOUS SECTION.
%where $\fz_{\fg}(x)$ denotes the centralizer in $\fg$ of $x$ and $\fz_{\fk}(x_{\fk})$ denotes
%the centralizer of $x_{\fk}$ in $\fk$.  

%\section{$n$-strongly regular elements in the orthogonal case}

 %\begin{proof}
%NOT PROVEN.  YOU HAVE TO CHECK TO THE OTHER $K$-ORBITS FIRST.  YOU HAVE
%ONLY PROVEN THIS FOR $n=4$.  
%\end{proof}
%\section{The quasi Jordan decomposition for the pair spherical pair $(\fg\oplus \fk, \fk_{\Delta})$.}

%\section{Wallach Representations for orthogonal eigenvalue coincidence varieties}

%Maybe a construction of Wallach representations for orthogonal coincidence varieties would 
%be easier than in the $\fgl(n,\C)$ case because they are irreducible and only one $K$-orbit contributes. 
%I don't know...

\bibliographystyle{amsalpha.bst}

\bibliography{bibliography-1}

\end{document}